\documentclass[a4paper]{amsart}
\usepackage[utf8]{inputenc}

\usepackage{enumitem}

\usepackage{amsmath}
\usepackage{mathtools}
\usepackage{amssymb}
\usepackage{amsfonts}

\usepackage{microtype}

\usepackage{xcolor}

\usepackage{tikz}
\usetikzlibrary{arrows.meta,bending}
\usepackage{tkz-euclide}
\usetikzlibrary{positioning}
\usetikzlibrary{patterns}

\allowdisplaybreaks

\usepackage{mathtools}

\newcommand{\natNum}{\mathbb{N}}
\newcommand{\realNum}{\mathbb{R}}
\newcommand{\complNum}{\mathbb{C}}
\newcommand{\posRealNum}{[0,\infty)}
\newcommand{\semiGroup}[1]{\mathbb{T}(#1)}
\newcommand{\semiGroupDef}{\mathbb{T} = \left( \semiGroup{t} \right)_{t \geq 0}}
\newcommand{\loc}{\mathrm{loc}}
\newcommand{\andMath}{\textrm{and}}
\newcommand{\realPart}[1]{\mathrm{Re}\left( #1 \right)}
\newcommand{\set}[1]{\left\lbrace #1 \right\rbrace}
\newcommand{\innerProd}[2]{\left\langle #1 , #2 \right\rangle}
\newcommand{\lpSpaceDT}[3]{L^{#1}\left(#2, #3\right)}
\newcommand{\lpSpacelocDT}[3]{L^{#1}_{\loc}\left(#2, #3\right)}
\newcommand{\laplaceTr}[1]{\mathcal{L}\left\lbrace #1 \right\rbrace}
\newcommand{\laplaceInvTr}[1]{\mathcal{L}^{-1}\left\lbrace #1 \right\rbrace}
\newcommand{\intd}[1]{\,\mathrm{d}#1}
\newcommand{\der}[2]{\frac{\mathrm{d}#1}{\mathrm{d}#2}}
\newcommand{\derOrd}[3]{\frac{\mathrm{d}^{#3}#1}{\mathrm{d}#2^{#3}}}
\newcommand{\partialDer}[2]{\frac{\partial #1}{\partial #2}}

\newcommand{\dom}{\mathrm{D}}
\newcommand{\pH}[6]{$\Sigma_{#1}\left(#2, #3, #4, #5, #6 \right)$}
\newcommand{\pHSmallBrack}[6]{$\Sigma_{#1}(#2, #3, #4, #5, #6 )$}
\newcommand{\pHstand}{\pHSmallBrack{n}{P_1}{P_0}{\mathcal{H}}{\widetilde{W}_B}{\widetilde{W}_C}}

\usepackage{bbm}
\newcommand{\idMatrix}[1]{\mathbbm{1}_{#1}}
\newcommand{\absMatrix}[1]{#1_{|\cdot|}}

\newcommand{\matrixNorm}[3]{\| #1 \|_{\ell^{#2} \rightarrow \ell^{#3}}}

\newtheorem{thrm}{Theorem}
\newtheorem{prpstn}[thrm]{Proposition}
\newtheorem{lmm}[thrm]{Lemma}
\newtheorem{crllr}[thrm]{Corollary}
\newtheorem{cnjctr}[thrm]{Conjecture}

\theoremstyle{definition}
\newtheorem{dfntn}[thrm]{Definition}
\newtheorem{xmpl}[thrm]{Example}
\newtheorem{assumption}[thrm]{Assumption}

\theoremstyle{remark}
\newtheorem{rmrk}[thrm]{Remark}

\begin{document}


\title{BIBO stability of 1-D hyperbolic boundary control systems}
\thanks{Wierzba is supported by the Theme Team project ``Predictive Avatar Control and Feedback'' sponsored by the Faculty of Eletrical Engineering, Computer Science and Mathematics.}
\author{Felix L.~Schwenninger}
\author[Alexander A. Wierzba]{Alexander A. Wierzba*}
\thanks{*Corresponding author}
\address{Department of Applied Mathematics, University of Twente, P.O. Box 217, 7500 AE Enschede, The Netherlands}\email{f.l.schwenninger@utwente.nl, alexander.wierzba@uni-wuppertal.de}
\subjclass{93D25, 93C05, 93C20, 35L04}
\keywords{BIBO stability, port-Hamiltonian system, infinite-dimensional system, input-output stability}
\begin{abstract}
We study the question of bounded-input bounded-output (BIBO) stability of a class of 1-D hyperbolic boundary control systems, which, in particular, contains distributed port-Hamiltonian systems. Exploiting the particular structure of the transfer function of these systems, we derive several sufficient conditions for BIBO stability.
\end{abstract}

\maketitle

%


\section{Introduction}
Spatially one-dimensional, hyperbolic evolution equations, such as given by port-Hamiltonian systems (pHS) \cite{a_vanDerSchaftMaschke02DistributedPHS, b_JacobZwart2012, a_JacobZwart23_InfiniteDimPHS} or hyperbolic balance laws \cite{b_BastinCoron16_Stability1dHyperbolicSystems, b_Dafermos16_HyperbolicConservationLawsInContPhys, b_Serre99_SystemsOfConservationLaws, b_Bartecki16_ModelingAnalysisOfLinHypSystemsBalanceLaws} have constituted an important system class used in modelling and control of real world applications. Their close connection to the ideas of energy-flow and energy-conservation makes them a useful framework for the study of physical systems\cite{a_vanDerSchaft06pHSIntroduction, a_vanDerSchaft20PHSModeling, b_Duindam09RedPHSBook} both mathematically as well as from an engineering perspective.

A particular type of example are systems of port-Hamiltonian type, as originally defined in \cite{b_JacobZwart2012, th_Villegas2007, a_leGorrecZwartMaschke05DiracStructuresBCS, a_JacobZwart18OperatorTheoreticApproach}, and including those being impedance passive \cite{th_Augner2016,a_JacobZwart23_InfiniteDimPHS, a_vanDerSchaft06pHSIntroduction}, which nowadays are  commonly referred to as \emph{port-Hamiltonian systems}. These include systems given by a PDE of the form
\begin{equation*}
    \partialDer{x}{t} = P_1 \partialDer{}{\xi} \left( \mathcal{H} x \right) + P_0 \left( \mathcal{H} x \right)
\end{equation*}
on some spatial interval $[a,b]$ together with boundary control and observation at its endpoints. The justification and motivation for the study of theses systems lies in their ability to provide a modelling framework for a variety of different real-world processes, including first of all transport and flow phenomena such as in fluid mechanics \cite{a_PasumarthyVanDerschaft06CanalSystems} or chemistry \cite{a_ZhouEtAl12DistPHSChemicalReactors, a_ZhouEtAl15DistPHSReactionDiffusion}  and also vibration effects within flexible structures as appearing e.g.\ in robotics \cite{th_Mattioni2021,a_MacchelliStramigioliMelchiorri06ManipulatorsFlexibleLinks}. Particularly, in this latter case, the approach using infinite-dimensional systems allows to refine finite-dimensional models where the mentioned phenomena (vibrations, flows, etc.) are usually ignored or only treated in a very simplified way. We refer the interested reader to the literature review \cite{a_RashadEtAl20DistributedPHS} for a more history, and applications of distributed pHS. On the other hand, 1D hyperbolic systems have been extensively studied in the past decades, driven by numerous occurence of transport phenomena in the sciences, \cite{a_BastinCoronHayat21ISSwrtSupNorms,b_BastinCoron16_Stability1dHyperbolicSystems,a_Hayat21BoundaryStabilization}, such as in (traffic) networks, flows in fluid channels and light propagation in optics.  In particular, stabilizing such systems is of key relevance both theoretically and practically. 

A natural and well-studied question for hyperbolic balance laws is whether a system is stable or, as often in a control context,  whether it is stabilizable, see e.g. \ \cite{a_BastinCoronHayat21ISSwrtSupNorms, b_BastinCoron16_Stability1dHyperbolicSystems} and the references therein. Only recently, work has been undertaken to systematically classify different notions for the above-mentioned systems of port-Hamiltonian type. These studies include \emph{internal stability}, and in particular recent characterizations of exponential \cite{a_TrostorffWaurick23ExponentialStabilityPHS}, asymptotic stability \cite{a_WaurickZwart23AsymptoticStabilityPHS} and semi-uniform stability \cite{a_AroraEtAl24SemiUniformStability}. Previously only sufficient conditions for these properties were known \cite{b_JacobZwart2012, th_Villegas2007}, which were however mainly restricted to the case of systems featuring a contraction semigroup as encountered in the vast majority of concrete examples.

On the other hand, the input-output behaviour of boundary control systems is an important aspect for control applications. The systems considered here fall -- under practically weak technical assumptions -- in the class of $L^2$-well-posed systems, see e.g.\ \cite{b_JacobZwart2012}. Thus there exist constants $c_t$ for any $t>0$ such that for any input $u \in \lpSpaceDT{2}{[0,t]}{U}$ the corresponding output $y$ satisfies $\| y \|_{\lpSpaceDT{2}{[0,t]}{Y}} \leq c_t \| u \|_{\lpSpaceDT{2}{[0,t]}{U}}$.
The importance of result like this is grounded in the port-Hamiltonian structure being used to model energy-flows and the input-output stability thus being related to the concepts of energy-conserving and energy-dissipating systems. 

In this contribution we study the  notion of \emph{bounded-input bounded-output stability} (BIBO stability), which is classical for finite-dimensional linear systems, for a class of 1-D hyperbolic boundary control systems. An input-output system is called BIBO stable if any bounded input is mapped to a bounded output and there exists a uniform relative norm bound \cite{a_UnserNoteOnBiboStability, a_CallierDesoer1978, a_WangCobb96BIBOTimeInv, a_WangCobb03BIBOTimeVar, a_AbusaksakaPartingtonBIBO2013, a_SchwenningerWierzbaZwart24BIBO, a_HastirEtAl23NonlinearBIBO}. We thus aim at input-output relations of the form $\| y \|_{\lpSpaceDT{\infty}{[0,t]}{Y}} \leq c \| u \|_{\lpSpaceDT{\infty}{[0,t]}{U}}$ for some positive constant $c$. 

The question of whether a system is BIBO stable or not, while relatively straightforward in the finite-dimensional setting, becomes significantly more involved in the case of an infinite-dimensional state space and control/observation acting on the spatial domain's boundary. For instance, it is then no longer the case that BIBO stability is always implied by exponential stability (see \cite[Thm.~5.1]{a_SchwenningerWierzbaZwart24BIBO}, but note that crucially this example is not hyperbolic). In \cite{a_SchwenningerWierzbaZwart24BIBO} it was shown that, for a linear system described in the framework of system nodes (see \cite{b_Staffans2005}) with finite-dimensional input and output spaces -- as will be the case in the system class considered in this contribution -- BIBO stability is equivalent to the inverse Laplace transform of the transfer function $\mathbf{G}(s)$ being a measure of bounded total variation. This includes in particular the case that the impulse response -- upon being well-defined -- of the system is an $L^1$ function. While this condition offers a complete characterisation of when a system is BIBO stable, it is often of limited applicability for concrete systems. Typically no closed form expression for the inverse Laplace transform of the respective transfer function exists or is prohibitively hard to find. Thus BIBO stability is in general much more difficult to establish than e.g.\ $L^2$-well-posedness. This circumstance motivates the search for other sufficient or necessary conditions and the concrete study of BIBO stability for particular system classes. A few such results have been provided in for example \cite{a_SchwenningerWierzbaZwart24BIBO} already, however almost exclusively for parabolic systems, which exclude the hyperbolic systems considered here.

We note that BIBO stability -- besides being an intriguing topic in itself and classical control theory -- plays an essential role in e.g.\ the applicability of funnel control to relative degree systems \cite{a_IlchmannRyanTrennFunnel2005,a_BergerPucheSchwenningerFunnel2020} and more generally for systems for which high-gain control is used. Such systems with interior dynamics modelled by port-Hamiltonian systems appear for example in the study of mechanical systems containing flexible and deformable parts \cite{th_Mattioni2021}, which significantly influence the behaviour.

This paper begins with the definition of the considered class of hyperbolic 1-D boundary control systems, which is based on the original framework used for port-Hamiltonian systems in \cite{b_JacobZwart2012,a_ZwartGorrecMaschkeVillegas10WellPosedness} in Section \ref{sec:pHS}. 
Then, in Section~\ref{sec:approachOneTransferFunction}, we derive an abstract decomposition of the transfer function , which takes inspiration from recent results for exponential stability of pHS \cite{a_TrostorffWaurick23ExponentialStabilityPHS} and exploit its particular structure in order to apply the known characterisation of BIBO stability in a general setting. This yields a collection of sufficient criteria expressed in the form of matrix conditions for a subclass of pHS. 
Section~\ref{sec:approachTwoWellPosedness} then presents a different approach to BIBO without any reference to transfer functions, instead employing the $L^1$-well-posedness of pHS on an $L^1$ state space and the connection between $L^1$ and $L^\infty$ stability notions discussed in \cite{a_SchwenningerWierzba24_ADualNotionToBIBOStability}.

\subsection{Notation}
For any $\alpha \in \realNum$, let $\complNum_\alpha := \left\lbrace z \in \complNum \;\middle|\; \realPart{z} > \alpha \right\rbrace$. Let $\mathbb{K}$ denote either $\realNum$ or $\complNum$. The Banach spaces considered are over the field $\mathbb{K}$.  

Let $A$ be a linear operator defined on some Banach space $X$. Then we denote by $\dom(A) \subseteq X$ its domain of definition, by $\rho(A)$ its resolvent set and by $\sigma(A)$ its spectrum.
If $A$ is further the generator of a strongly continuous semigroup $\semiGroupDef$ on $X$, then $X_1$ is defined as the space $\dom(A)$ with the norm $\|x\|_{X_1} := \| (\beta I - A) x \|_X$ with $\beta \in \rho(A)$ and $X_{-1}$ as the completion of $X$ with respect to the norm $\|x\|_{X_{-1}} := \| (\beta I - A)^{-1} x \|_X$ again with $\beta \in \rho(A)$. There exists a unique extension $\mathbb{T}_{-1} = \left( \mathbb{T}_{-1}(t) \right)_{t \geq 0}$ of the semigroup $\mathbb{T}$ to the space $X_{-1}$ with generator $A_{-1}:X \rightarrow X_{-1}$ which is an extension of the operator $A$. For more details see \cite{b_Staffans2005,b_TucsnakWeiss2009}. 

Let $\idMatrix{n} \in \mathbb{K}^{n \times n}$ denote the identity matrix. For a matrix $M \in \mathbb{K}^{n \times n}$ and $p,q \in [1,\infty]$ let $\matrixNorm{M}{p}{q}$ be the matrix norm induced by the $\ell^p$-norms on $\mathbb{K}^n$ given by $\| x \|_{\ell^p} = \left( \sum_{k=1}^n |x_k|^p \right)^{\frac{1}{p}}$ for $p \in [1,\infty)$ and $\| x \|_{\ell^\infty} = \sup_{1 \leq k \leq n} |x_k|$ for $p = \infty$ as 
\begin{equation*}
    \matrixNorm{M}{p}{q} = \sup_{x \in \mathbb{K}} \frac{\| M x \|_{\ell^q}}{\| x \|_{\ell^p}}.
\end{equation*}

For a Borel measure $h$ on $[0,\infty)$, let $\laplaceTr{h}$ denote its Laplace transform if it exists (on some right half-plane). For a function $g$ of one complex variable, if there exists a measure $\mu$ such that $\laplaceTr{\mu}$ and $g$ agree on some right half plane we will call $\mu$ the inverse Laplace transform and write $\mu = \laplaceInvTr{g}$.

Let $\mathcal{M}(\posRealNum, \mathbb{K}^{n \times m})$ denote the set of Borel measures of bounded total variation on $\posRealNum$ with values in $\mathbb{K}^{n \times m}$. 
Furthermore, for $h \in \mathcal{M}(\posRealNum, \mathbb{K}^{n \times m})$ let $\| h \|_{\mathcal{M}}$ denote the total variation of $h$ \cite[Sec.~3.2]{b_GripenbergLondenStaffans1990}.

Furthermore, let $\absMatrix{M} \in \mathbb{K}^{n \times n}$ be defined as the matrix with entries $( \absMatrix{M})_{ij} = |M_{ij}|$.

\section{A class of hyperbolic boundary control systems}
\label{sec:pHS}
In this work we consider \emph{linear hyperbolic boundary control systems} of the form
\begin{equation}
\label{eq:pHSequation}
\begin{aligned}
    \partialDer{x}{t}(\xi, t) &= P_1 \partialDer{}{\xi} \left( \mathcal{H}(\xi) x(\xi, t) \right) + P_0(\xi) \left( \mathcal{H}(\xi) x(\xi, t) \right), & (\xi,t) &\in [a,b] \times \posRealNum, \\
    x(\xi, 0) &= x_0(\xi), & \xi &\in [a,b], \\
    u(t) &= \widetilde{W}_B \begin{bmatrix}
        \left( \mathcal{H}x \right)(b,t) \\ \left( \mathcal{H}x \right)(a,t) 
    \end{bmatrix},  & t &\in \posRealNum, \\
    y(t) &= \widetilde{W}_C \begin{bmatrix}
        \left( \mathcal{H}x \right)(b,t) \\ \left( \mathcal{H}x \right)(a,t) 
    \end{bmatrix},  & t &\in \posRealNum,
\end{aligned}
\end{equation}
with the following assumptions:
\begin{assumption}
    \label{ass:systemClassAssumptions}
    \begin{enumerate}
        \item $P_1 \in \mathbb{K}^{n\times n}$ is invertible and self-adjoint;
    \item $P_0 \in \lpSpaceDT{\infty}{[a,b]}{\mathbb{K}^{n\times n}}$;
    \item $\mathcal{H} \in \lpSpaceDT{\infty}{[a,b]}{\mathbb{K}^{n\times n}}$, $\mathcal{H}(\xi)$ is self-adjoint for almost all $\xi \in [a,b]$ and $\exists M,m > 0$ s.t. $m \idMatrix{n} \leq \mathcal{H}(\xi) \leq M \idMatrix{n}$ for almost all $\xi \in [a,b]$;
    \item $\widetilde{W}_B \in \mathbb{K}^{n \times 2n}$ and $\left[ \begin{smallmatrix}
        \widetilde{W}_B \\ \widetilde{W}_C
    \end{smallmatrix} \right] \in \mathbb{K}^{2n \times 2n}$ have full rank;
    \item \label{ass:systemClassAssumptions:semigroup}  the operator $A := P_1 \partialDer{}{\xi}  \mathcal{H} + P_0  \mathcal{H}$ with domain
\begin{align*}
    \dom(A) = \set{x_0 \in X \, \middle| \, \mathcal{H} x_0 \in H^1\left( [a,b], \mathbb{K}^n \right), \widetilde{W}_B \begin{bmatrix}
        \left( \mathcal{H}x_0 \right)(b) \\ \left( \mathcal{H}x_0 \right)(a) 
    \end{bmatrix} = 0 }
    \end{align*}
    is the infinitesimal generator of a $C_0$-semigroup on the state space $X = \lpSpaceDT{2}{[a,b]}{\mathbb{K}^n}$ with inner product \[\innerProd{f}{g}_X := \frac{1}{2} \int_a^b g(\xi)^* \mathcal{H}(\xi) f(\xi) \intd \xi.\]
    \end{enumerate}
\end{assumption}
We will in the following use \pHstand{} to denote this hyperbolic boundary control system (hBCS). 

Note that a similar class was studied in the seminal work \cite{a_leGorrecZwartMaschke05DiracStructuresBCS,a_ZwartGorrecMaschkeVillegas10WellPosedness} in the context of $L^2$-well-posedness, which marked the beginning of what later became known as distributed port-Hamiltonian systems \cite{b_JacobZwart2012}. We emphasise that the class studied here particularly includes these port-Hamiltonian systems, both with \cite{th_Augner2016, a_JacobZwart23_InfiniteDimPHS} or without \cite{b_JacobZwart2012} assumed impedance passivity, which in the meantime seems to be commonly assumed for port-Hamiltonian systems. Our notation used here originates from the text book \cite{b_JacobZwart2012} and as there we will refer to $\mathcal{H}$ as the \emph{Hamiltonian} of the system.

\begin{rmrk}
\label{rem:assumptionRemarks}
    \begin{enumerate}
    \item Note in particular, that we do not assume at this point that $A$ generates a \emph{contraction} semigroup.
    \item Since $A$ is a bounded perturbation of the operator $\frac{\partial}{\partial \xi} \mathcal{H}$, with the same domain, the assumption that $A$ generates a $C_{0}$-semigroup,  Assumption \ref{ass:systemClassAssumptions}(5),  is independent of the choice of $P_{0}$.
        \item The Assumption \ref{ass:systemClassAssumptions}(5) of $A$ generating a $C_0$-semigroup is in particular satisfied if 
        \begin{align}
            \label{eq:contractionCondition}
            \widetilde{W}_B R_0^{-1} \Sigma \left( \widetilde{W}_B R_0^{-1} \right)^* \geq 0
        \end{align}
        where $R_0 = \frac{1}{\sqrt{2}} \left[ \begin{smallmatrix}
        P_1 & - P_1 \\ \idMatrix{n} & \idMatrix{n}
    \end{smallmatrix} \right]$ and $\Sigma = \left[ \begin{smallmatrix}
        0 & \idMatrix{n} \\ \idMatrix{n} & 0
\end{smallmatrix} \right]$. If in addition $P_0^\ast = - P_0$, then \eqref{eq:contractionCondition} is even equivalent to the semigroup being contractive \cite[Thm.~7.2.4, Thm.~10.3.1]{b_JacobZwart2012}.
     \end{enumerate}
\end{rmrk}

\noindent For our system class, direct extension of \cite[Thm.~11.3.2~\&~Thm.~12.1.3]{b_JacobZwart2012} to the case of spatially dependent $P_0$, shows that the \emph{transfer function} $\mathbf{G}$ can be found for $s \in \rho(A)$ as the unique solution of
\begin{equation}
\begin{split}
    \label{eq:transferFunctionODE}
        &s x_0(\xi) = P_1 \der{}{\xi} \left( \mathcal{H}(\xi) x_0(\xi) \right) + P_0(\xi) \left( \mathcal{H}(\xi) x_0(\xi) \right), \\
        &u_0 = \widetilde{W}_B \begin{bmatrix}
        \left( \mathcal{H}x \right)(b) \\ \left( \mathcal{H}x \right)(a) 
    \end{bmatrix}, \qquad
    \mathbf{G}(s) u_0 = \widetilde{W}_C \begin{bmatrix}
        \left( \mathcal{H}x \right)(b) \\ \left( \mathcal{H}x \right)(a) 
    \end{bmatrix}.
\end{split}
\end{equation}

\subsection{BIBO stability of hyperbolic boundary control systems}
In this paper we want to study the question of BIBO stability of systems \eqref{eq:pHSequation} satisfying Assumption~\ref{ass:systemClassAssumptions}.
 In the following we will understand the term \emph{classical solution} in the common sense of evolution equations, that is as a collection $(u,x,y)$ with $u \in C(\posRealNum, U)$, $x \in C^1(\posRealNum, X)$ and $y \in C(\posRealNum,Y)$ solving Equation~\eqref{eq:pHSequation}.
\begin{dfntn}
\label{def:BIBOStabClassSolutions}
A hBCS is called \emph{BIBO stable} if there exists $c > 0$ such that for any of its classical solution $(u,x,y)$ with $u \in C^\infty_c(\posRealNum, U)$ and $x(0) = 0$ and for all $t > 0$ we have that
$\| y \|_{\lpSpaceDT{\infty}{[0,t]}{Y}} \leq c \| u \|_{\lpSpaceDT{\infty}{[0,t]}{U}}$.
\end{dfntn}

\begin{rmrk}
Any hBCS satisfying Assumption~\ref{ass:systemClassAssumptions} can be equivalently expressed within the framework of system nodes (see \cite{b_Staffans2005} for a general introduction and \cite[Ch.~11]{b_JacobZwart2012}\cite{a_Schwenninger20ISSboundaryControl} for the rewriting of hBCS respectively general boundary control systems in this form). In particular, the following observations can be made.
    \begin{enumerate}
        \item 
        Following the construction in \cite{b_Staffans2005}, we can define a generalised solution $(u,x,y)$ of a hBCS for any input function $u \in \lpSpacelocDT{1}{\posRealNum}{U}$ and initial value $x_0 \in X$. 
        
        Here the state is given by $x(t) = \semiGroup{t} x_0 + \int_0^t \mathbb{T}_{-1}\left( t - s \right) B u(s) \intd{s} \in X_{-1}$, where $\mathbb{T}$ is the semigroup generated by the operator $A$ from Assumption~\ref{ass:systemClassAssumptions}(5), and the output in the sense of distributions as
        $
    y(t) = \derOrd{}{t}{2} \widetilde{W}_C \left[ \begin{smallmatrix}
        \left( \mathcal{H}z \right)(b,t) \\ \left( \mathcal{H}z \right)(a,s) \end{smallmatrix} 
    \right]
    $
    where $z(t) := \int_0^t (t - s) x(t) \intd s$.
    \item The hBCS considered here fall within the class of systems investigated for BIBO stability in~\cite{a_SchwenningerWierzbaZwart24BIBO}. In particular, by \cite[Thm.~3.5]{a_SchwenningerWierzbaZwart24BIBO}, any generalised solution $(u,x,y)$ of a BIBO stable hBCS with $u \in \lpSpacelocDT{\infty}{\posRealNum}{U}$ satisfies $y \in \lpSpacelocDT{\infty}{\posRealNum}{Y}$ (instead of only being a distribution) and for any $t > 0$ we have that $\| y \|_{\lpSpaceDT{\infty}{[0,t]}{Y}} \leq c \| u \|_{\lpSpaceDT{\infty}{[0,t]}{U}}$, i.e.\ the inequality from Definition~\ref{def:BIBOStabClassSolutions} extends to generalised solutions with bounded inputs.
    \end{enumerate}
\end{rmrk}

\noindent 
That the question of BIBO stability of the system class considered here is not trivial in either way is shown by the following proposition.
\begin{prpstn}
\label{prop:existenceNonBIBOpHS}
    \begin{enumerate}
        \item There exists a BIBO stable hBCS satisfying Assumption~\ref{ass:systemClassAssumptions}.
        \item There exists a hBCS \pHstand{}  satisfying Assumption~\ref{ass:systemClassAssumptions}, $P_0^* = -P_0$  and such that Equation~\eqref{eq:contractionCondition} holds, which is not BIBO stable. 
    \end{enumerate}
\end{prpstn}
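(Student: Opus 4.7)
The plan is to exhibit explicit examples for each part, relying on concrete transport-type systems whose transfer functions can be computed by tracking characteristics and then invoking the characterisation of BIBO stability via bounded-variation inverse Laplace transforms from \cite{a_SchwenningerWierzbaZwart24BIBO}.

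For part~(1), I would take the simplest possible hyperbolic system: the pure transport equation on $[a,b]=[0,1]$ with $n=1$, $P_1 = -1$, $P_0 = 0$, $\mathcal{H}=1$, $\widetilde{W}_B = [0,\,1]$ (so $u(t) = x(0,t)$) and $\widetilde{W}_C = [1,\,0]$ (so $y(t) = x(1,t)$). Assumption~\ref{ass:systemClassAssumptions}(1)--(4) is immediate, and the generator $A=-\partial_\xi$ with domain $\{x_0 \in H^1[0,1] : x_0(0)=0\}$ is well known to generate a contraction semigroup. The method of characteristics yields $y(t) = u(t-1)$, hence $\mathbf{G}(s)=e^{-s}$ and $\mathcal{L}^{-1}\{\mathbf{G}\} = \delta_1$ has finite total variation, so the system is BIBO stable.

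For part~(2), the plan is to create a lossless loop that routes the input back to the output infinitely often. Take $n=2$, $[a,b]=[0,1]$, $P_1 = \operatorname{diag}(1,-1)$, $P_0 = 0$ (so trivially $P_0^* = -P_0$), $\mathcal{H} = \idMatrix{2}$, and the boundary operators
\[
\widetilde{W}_B = \begin{bmatrix} 1 & -1 & 0 & 0 \\ 0 & 0 & -1 & 1 \end{bmatrix}, \qquad
\widetilde{W}_C = \begin{bmatrix} 0 & 1 & 0 & 0 \\ 0 & 0 & 1 & 0 \end{bmatrix}.
\]
Then the rank condition in Assumption~\ref{ass:systemClassAssumptions}(4) is easily checked. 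With the choice of inputs $u = (u_1,0)^\top$, the boundary conditions read $x_1(1,t) = x_2(1,t) + u_1(t)$ and $x_2(0,t) = x_1(0,t)$, and the output is $y_1(t) = x_2(1,t)$. Since $x_1$ propagates at speed $1$ from $\xi=1$ to $\xi=0$ and $x_2$ from $\xi=0$ to $\xi=1$, tracking a wavefront around one loop gives $y_1(t) = y_1(t-2) + u_1(t-2)$, hence
\[
\mathbf{G}(s)_{11} = \frac{e^{-2s}}{1-e^{-2s}} = \sum_{k=1}^{\infty} e^{-2ks},
\]
whose inverse Laplace transform is $\sum_{k\geq 1}\delta_{2k}$, a measure of \emph{infinite} total variation. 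By \cite[Thm.~3.5]{a_SchwenningerWierzbaZwart24BIBO} (or by directly taking $u_1\equiv 1$, yielding $y_1(t) \sim t/2$) the system fails to be BIBO stable.

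The main obstacle is to verify that this second example generates a contraction semigroup, i.e.\ that the matrix condition~\eqref{eq:contractionCondition} holds (which, together with $P_0^* = -P_0$, is in fact equivalent to contractivity by Remark~\ref{rem:assumptionRemarks}(3)). Since $P_1^2 = \idMatrix{2}$, a short computation shows that $R_0$ is unitary, so $R_0^{-1} = R_0^*$. Carrying out the product yields
\[
\widetilde{W}_B R_0^{-1} = \tfrac{1}{\sqrt{2}} \begin{bmatrix} 1 & 1 & 1 & -1 \\ 1 & 1 & -1 & 1 \end{bmatrix},
\]
and plugging this into the quadratic form one computes directly that $\widetilde{W}_B R_0^{-1}\,\Sigma\,(\widetilde{W}_B R_0^{-1})^* = 0 \geq 0$, so that~\eqref{eq:contractionCondition} is satisfied. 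This delivers the contraction semigroup while the transfer-function argument above furnishes the failure of BIBO stability, completing both parts of the proposition.
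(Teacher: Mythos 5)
Your proposal is correct and follows essentially the same strategy as the paper: an explicit transport equation for part (1), and for part (2) a lossless feedback loop whose contraction form $\widetilde{W}_B R_0^{-1}\Sigma(\widetilde{W}_B R_0^{-1})^*$ vanishes, so that \eqref{eq:contractionCondition} holds while a constant input produces a staircase-like unbounded output. The only difference is that the paper realises the loop with a single scalar equation ($n=1$, $\widetilde{W}_B=[1\ {-1}]$, output $y(t)=\lfloor t\rfloor$), whereas you use two counter-propagating components ($n=2$) to the same effect.
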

\begin{proof}
    \begin{enumerate}
        \item Take \pH{1}{1}{0}{1}{\left[ \begin{smallmatrix}
        1 & 0
    \end{smallmatrix} \right]}{\left[ \begin{smallmatrix}
        0 & 1
    \end{smallmatrix} \right]}. 
    Then the system is just the trivially BIBO stable transport equation with input $u$ and output $y$   
    \begin{align*}
    \partialDer{x}{t}(\xi, t) &=  \partialDer{}{\xi} \left( x(\xi, t) \right) , &
    x(\xi, 0) &= x_0(\xi),  \\
    u(t) &= x(b,t),   &
    y(t) &= x(a,t).
    \end{align*}    
    \item Take \pH{1}{1}{0}{1}{\left[ \begin{smallmatrix}
        1 & -1
    \end{smallmatrix} \right]}{\left[ \begin{smallmatrix}
        0 & 1
    \end{smallmatrix} \right]}. 
Then $\widetilde{W}_B R_0^{-1} \Sigma \left( \widetilde{W}_B R_0^{-1} \right)^* = 0$ and thus the system satisfies in particular Assumption \ref{ass:systemClassAssumptions}(5) by Remark~\ref{rem:assumptionRemarks} (and the semigroup is even contractive). On the other hand, the constant input $u(t) = 1 \in \lpSpaceDT{\infty}{\posRealNum}{\realNum}$, yields the output $y(t) = \lfloor t \rfloor$ and hence the system is not BIBO stable. \qedhere
    \end{enumerate}
\end{proof}

It turns out that even restricting to a subtype of the system class, i.e.\ adding further assumptions on the boundary control systems, such as impedance passivity (see e.g.\ \cite[Sec.~3.2]{th_Augner2016}), does not improve the situation.
\begin{xmpl}[Impedance passive but non-BIBO stable pHS]
\label{ex:Ch05:impPassiveNotBIBO}
    Consider the system \pH{2}{\idMatrix{2}}{0}{\left[\begin{smallmatrix}
        \frac{1}{2} & 0 \\ 0 & 1
    \end{smallmatrix}\right]}{\widetilde{W}_B}{\widetilde{W}_C} on $[0,1]$ with 
    \begin{align*}
        \widetilde{W}_B = \begin{bmatrix}
            -1 & 0 & \frac{1}{2} & \frac{1}{2} \\ 0 & -1 & -\frac{1}{2} & -\frac{1}{2}
        \end{bmatrix} \quad \andMath \quad \widetilde{W}_C = \frac{1}{2}\begin{bmatrix}
            -1 & -1 & -1 & -1 \\ 1 & -3 & 1 & -3
        \end{bmatrix},
    \end{align*}
    that is the system given by the equations
    \begin{equation*}
        \begin{aligned}
        \partialDer{x_1}{t}(\xi, t) &= \frac{1}{2} \partialDer{x_1}{\xi}(\xi, t), & (\xi,t) &\in [a,b] \times \posRealNum, \\
        \partialDer{x_2}{t}(\xi, t) &= \hphantom{\frac{1}{2}} \partialDer{x_2}{\xi}(\xi, t), & (\xi,t) &\in [a,b] \times \posRealNum, \\
        u_1(t) &= - \frac{1}{2} x_1(1,t) + \frac{1}{4} x_1(0,t) + \frac{1}{2} x_2(0,t), & t &\in \posRealNum, \\
        u_2(t) &= - \hphantom{\frac{1}{2}} x_2(1,t) - \frac{1}{4} x_1(0,t) - \frac{1}{2} x_2(0,t), & t &\in \posRealNum, \\
        y_1(t) &= - \frac{1}{4} x_1(1,t) - \frac{1}{2} x_2(1,t) - \frac{1}{4} x_1(0,t) - \frac{1}{2} x_2(0,t), & t &\in \posRealNum, \\
        y_2(t) &= \hphantom{-} \frac{1}{4} x_1(1,t) - \frac{3}{2} x_2(1,t) + \frac{1}{4} x_1(0,t) - \frac{3}{2} x_2(0,t), & t &\in \posRealNum.
    \end{aligned}
    \end{equation*}
    Using the characterisation from \cite[Prop.~3.2.16]{th_Augner2016} we can show that this is an impedance passive port-Hamiltonian system.

    However, this system is up to the choice of $\widetilde{W}_C$ the same as the one shown to be not BIBO stable in Proposition \ref{prop:BIBOdepHamiltonian}. As $\left[ \begin{smallmatrix}
        \widetilde{W}_B \\ \widetilde{W}_C
    \end{smallmatrix} \right]$ has full rank, we can write the outputs $y_1$ and $y_2$ as linear combinations of the inputs and outputs of that system. Hence this system cannot be BIBO stable either.
\end{xmpl}

\noindent As the system class considered here features finite-dimensional input and output spaces, Theorem 3.4 from \cite{a_SchwenningerWierzbaZwart24BIBO} provides a necessary and sufficient condition for BIBO stability. That is the system is BIBO stable if and only if the transfer function $\mathbf{G}$ is the Laplace transform of a measure of bounded total variation on $\posRealNum$.

While in theory this allows to check any hBCS of this form for BIBO stability, in practice this condition is often not usable. Consider as an example the system from the proof of Proposition \ref{prop:existenceNonBIBOpHS}. Using Theorem 12.2.1 from \cite{b_JacobZwart2012} we find that its transfer function is $\mathbf{G}(s) = \frac{1}{e^s - 1}$.  Already for this simple function no nice closed form expression exists for its inverse Laplace transform, preventing us from a simple application of the criterion. As it turns out, this is the case generically, rendering this approach to proofing respectively disproofing BIBO stability to be of little use in practice if looking at a particular system. However, we will see that the transfer function and the properties of its inverse Laplace transform can -- somewhat surprisingly -- still be used to derive sufficient conditions for subclasses of these systems. 

Finally, note that -- similar as for  exponential stability \cite{a_TrostorffWaurick23ExponentialStabilityPHS} -- no condition only in terms of the matrices $P_1$ and $\widetilde{W}_B$ can fully classify BIBO stability for these systems.
\begin{prpstn}
\label{prop:BIBOdepHamiltonian}
    There exists a BIBO stable \pHSmallBrack{n}{P_1}{P_0}{\mathcal{H}}{\widetilde{W}_B}{\widetilde{W}_C} and a $\widetilde{\mathcal{H}} \in \lpSpaceDT{\infty}{[a,b]}{\mathbb{K}^{n\times n}}$ such that \pHSmallBrack{n}{P_1}{P_0}{\widetilde{\mathcal{H}}}{\widetilde{W}_B}{\widetilde{W}_C} is not BIBO stable.
\end{prpstn}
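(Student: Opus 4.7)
The plan is to exhibit two hBCS with identical $P_1, P_0, \widetilde{W}_B, \widetilde{W}_C$ but different Hamiltonians, one BIBO stable and the other not. Concretely, take $n = 2$, $[a,b] = [0,1]$, $P_1 = \idMatrix{2}$, $P_0 = 0$,
\[
\widetilde{W}_B = \begin{bmatrix} -1 & 0 & \tfrac{1}{2} & \tfrac{1}{2} \\ 0 & -1 & -\tfrac{1}{2} & -\tfrac{1}{2} \end{bmatrix},
\]
and any $\widetilde{W}_C \in \mathbb{K}^{2\times 4}$ for which Assumption~\ref{ass:systemClassAssumptions}(4) holds -- for definiteness the $\widetilde{W}_C$ from Example~\ref{ex:Ch05:impPassiveNotBIBO}. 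The two Hamiltonians to be compared are $\mathcal{H} = \idMatrix{2}$ and $\widetilde{\mathcal{H}} = \operatorname{diag}(\tfrac{1}{2}, 1)$.

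For $\mathcal{H} = \idMatrix{2}$ the PDE \eqref{eq:pHSequation} decouples into two scalar transport equations of common speed $1$. For any smooth compactly supported input $u$, the method of characteristics together with $x_0 = 0$ yields $x_i(0,t) = x_i(1,t-1)$ for $t \ge 1$ (and zero before). Substituting this into the boundary relation for $u$ and solving the resulting $2\times 2$ algebraic system expresses each $x_i(1,t)$ as an explicit finite linear combination of $u(t), u(t-1)$, and consequently $y(t)$ as a finite-memory (FIR) function of $u(t), u(t-1), u(t-2)$. From this one reads off $\|y\|_{\lpSpaceDT{\infty}{[0,t]}{Y}} \le c\|u\|_{\lpSpaceDT{\infty}{[0,t]}{U}}$ with an explicit constant $c$, hence BIBO stability in the sense of Definition~\ref{def:BIBOStabClassSolutions}.

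For $\widetilde{\mathcal{H}} = \operatorname{diag}(\tfrac{1}{2}, 1)$ the characteristics travel at distinct speeds $\tfrac{1}{2}$ and $1$. Solving the boundary-value ODE \eqref{eq:transferFunctionODE} directly gives $x_0(\xi) = (C_1 e^{2s\xi}, C_2 e^{s\xi})^\top$, and substituting into $u_0 = \widetilde{W}_B[\cdots]$ produces a $2\times 2$ symbol $M(s)$ whose determinant equals, up to a nonvanishing exponential prefactor, $(2 e^s - 1)(e^s + 1)$. The factor $e^s + 1$ has zeros on the imaginary axis at $s = i\pi + 2\pi i k$, and a direct cofactor computation at $e^s = -1$ shows that $\operatorname{adj} M$ does not vanish there. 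For the $\widetilde{W}_C$ fixed above the transfer function $\mathbf{G}(s) = \widetilde{W}_C[\cdots] M(s)^{-1}$ therefore has a genuine simple pole at $s = i\pi$. Since the Laplace transform of any bounded Borel measure on $\posRealNum$ is bounded on $\complNum_0$, the characterisation of BIBO stability in \cite[Thm.~3.4]{a_SchwenningerWierzbaZwart24BIBO} implies that the modified system is \emph{not} BIBO stable.

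The main subtlety is the non-cancellation: one must verify that, after multiplication by $\widetilde{W}_C$, the residue of $M(s)^{-1}$ at $s = i\pi$ survives in at least one entry of $\mathbf{G}$. For the explicit $\widetilde{W}_C$ of Example~\ref{ex:Ch05:impPassiveNotBIBO} this reduces to a short $2\times 2$ linear-algebra check.
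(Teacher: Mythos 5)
Your proposal is correct, and the first half (BIBO stability for $\mathcal{H} = \idMatrix{2}$ via the method of characteristics) is essentially the paper's argument in time-domain clothing: the paper computes the transfer function explicitly and observes its inverse Laplace transform is a finite combination of shifted Dirac measures, which is exactly your FIR representation $x_1(1,t) = -u_1(t) - \tfrac12(u_1(t-1)+u_2(t-1))$, $x_2(1,t) = -u_2(t) + \tfrac12(u_1(t-1)+u_2(t-1))$. For the second half the routes genuinely diverge. The paper stays in the time domain: it feeds in a smooth bounded input with $u_1 \equiv 0$, $u_2(t) = (-1)^{t+1}$ at integers, solves the resulting delay recursion explicitly, and exhibits the linearly growing output $y_2(t) = \tfrac19(-1)^{t+1}\bigl(3t+4-4(-\tfrac12)^{t+1}\bigr)$. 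You instead argue in the frequency domain: the boundary symbol for $\widetilde{\mathcal{H}} = \operatorname{diag}(\tfrac12,1)$ is $M(s) = \bigl[\begin{smallmatrix} \frac12 - e^{2s} & \frac12 \\ -\frac12 & -\frac12 - e^s \end{smallmatrix}\bigr]$ with $\det M(s) = \tfrac{e^s}{2}(2e^s-1)(e^s+1)$, and the factor $e^s+1$ puts simple poles of $\mathbf{G}$ on $i\realNum$; since the Laplace transform of a measure of bounded total variation is bounded on $\complNum_0$, \cite[Thm.~3.4]{a_SchwenningerWierzbaZwart24BIBO} rules out BIBO stability. The non-cancellation check you defer does go through: $\operatorname{adj} M(i\pi) = \tfrac12\bigl[\begin{smallmatrix} 1 & -1 \\ 1 & -1\end{smallmatrix}\bigr]$, the numerator $\widetilde{W}_C\bigl[\begin{smallmatrix} e^{2s} & 0 \\ 0 & e^s \\ 1 & 0 \\ 0 & 1\end{smallmatrix}\bigr]$ evaluates at $s=i\pi$ to $\bigl[\begin{smallmatrix} -1 & 0 \\ 1 & 0 \end{smallmatrix}\bigr]$ for your $\widetilde{W}_C$, their product is nonzero, and $\tfrac{d}{ds}\det M(i\pi) = -\tfrac32 \neq 0$, so the pole is genuine and simple. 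Your approach buys robustness (it detects non-BIBO-stability for any $\widetilde{W}_C$ not annihilating the residue, without having to guess a destabilising input), at the cost of the residue computation; the paper's approach is more elementary and self-contained but requires explicitly solving the recursion. One small housekeeping point either way: semigroup generation for the modified Hamiltonian is guaranteed independently of $\mathcal{H}$ because this $\widetilde{W}_B$ satisfies \eqref{eq:contractionCondition} (indeed $\widetilde{W}_B R_0^{-1}\Sigma(\widetilde{W}_B R_0^{-1})^* = \tfrac12\bigl[\begin{smallmatrix}1 & 1\\ 1& 1\end{smallmatrix}\bigr] \geq 0$), which neither you nor the paper states but which is worth a sentence.
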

\begin{proof}
    Consider first \pHSmallBrack{2}{\idMatrix{2}}{0}{\idMatrix{2}}{\widetilde{W}_B}{\widetilde{W}_C} with 
     $\widetilde{W}_B = \left[ \begin{smallmatrix}
            -1 & 0 & \frac{1}{2} & \frac{1}{2} \\ 0 & -1 & -\frac{1}{2} & -\frac{1}{2}    \end{smallmatrix} \right]$ and $ W_C = \left[ \begin{smallmatrix}
            0 & 0 & 1 & 0 \vphantom{\frac{1}{2}} \\
            0 & 0 & 0 & 1 \vphantom{\frac{1}{2}}
    \end{smallmatrix} \right]$.
    Its transfer function is
    $
        \mathbf{G}(s) = \frac{1}{2} \left[ \begin{smallmatrix}
            - e^{-2s} - 2 e^{-s} & - e^{-2s} \\
             e^{-2s} &  e^{-2s} - 2 e^{-s}
        \end{smallmatrix} \right]
    $
    which has the inverse Laplace transform
    $
        \laplaceInvTr{\mathbf{G}}(t) = \frac{1}{2} \left[ \begin{smallmatrix}
            - \delta(t - 2) - 2 \delta(t - 1) & - \delta(t - 2) \\
             \delta(t - 2) &  \delta(t - 2) - 2 \delta(t - 1)
        \end{smallmatrix} \right].
    $
    This is a measure of bounded total variation and hence the system is BIBO stable by \cite[Thm.~3.4]{a_SchwenningerWierzbaZwart24BIBO}.

    Change now the Hamiltonian to arrive at the system \pH{2}{\idMatrix{2}}{0}{\left[\begin{smallmatrix}
        \frac{1}{2} & 0 \\ 0 & 1
    \end{smallmatrix}\right]}{\widetilde{W}_B}{\widetilde{W}_C}
    Let then $u = \left[\begin{smallmatrix}
        u_1 \\ u_2
    \end{smallmatrix}\right]$ be any smooth input such that $u_1(t)=0$ and $u_2(t) = (-1)^{t+1}$ for $t \in \natNum$. Then one can show that $y_2(t) = x_2(0,t) = \frac{1}{9} (-1)^{t+1} \left( 3 t + 4 - 4 \left(-\frac{1}{2} \right)^{t+1} \right)$ for $t \in \natNum$. Thus $\lim_{t\rightarrow \infty} |y_2(t)| = \infty$ and  the system cannot be BIBO stable.
\end{proof}

\section{An approach to BIBO stability via decomposition of the transfer function}
\label{sec:approachOneTransferFunction}
\label{sec:transferFunction}
A first approach to deriving sufficient conditions for BIBO stability is based on applying the characterisation from \cite[Thm.~3.4]{a_SchwenningerWierzbaZwart24BIBO} not in an individual case-by-case manner --- which as we saw may be prohibitively hard --- but instead to a general expression of the transfer function.

\subsection{An expression for the transfer function}
We begin by deriving a particular decomposition of the transfer function of a hBCS as described above by methods inspired by the constructions carried out in \cite{a_TrostorffWaurick23ExponentialStabilityPHS} and \cite{a_WaurickZwart23AsymptoticStabilityPHS} for the study of exponential respectively asymptotic stability.

For any such hBCS and $s \in \complNum$ we have the \emph{fundamental solution} $\Psi_\zeta^s \in \mathbb{K}^{n \times n}$ \emph{of the associated ODE}
\begin{align}
    \label{eq:fundamentalSolutionODE}
        v'(\zeta) = - P_1 ^{-1} \left( P_0(\zeta) - s \mathcal{H}^{-1}(\zeta) \right) v(\zeta).
\end{align}
on $[a,b]$ with $\Psi_a^s = \idMatrix{n}$ as a continuous function $\zeta \mapsto \Psi_\zeta^s$ due to \cite[Cor.~A.2]{a_WaurickZwart23AsymptoticStabilityPHS}.
\begin{prpstn}
\label{prop:transferFunctionExpression}
    The transfer function $\mathbf{G}(s)$ of \pHstand{} satisfying Assumption \ref{ass:systemClassAssumptions} is given for $s \in \rho(A)$ by
    \begin{equation}
    \label{eq:transferFuncExpression}
    \begin{split}
        \mathbf{G}(s) = \widetilde{W}_C \begin{bmatrix}
            \Psi_b^s \\ \idMatrix{n}
        \end{bmatrix} \left( \widetilde{W}_B \begin{bmatrix}
            \Psi_b^s \\ \idMatrix{n}
        \end{bmatrix} \right)^{-1}.
        \end{split}
    \end{equation}
\end{prpstn}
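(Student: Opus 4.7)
The plan is to reduce the ODE system \eqref{eq:transferFunctionODE} defining $\mathbf{G}(s)$ to a pointwise linear system in $\mathbb{K}^n$ by means of the fundamental solution $\Psi_\zeta^s$, and then to extract $\mathbf{G}(s)$ as a Schur-type product of two boundary matrices. The key change of variables is $v(\xi) := \mathcal{H}(\xi) x_0(\xi)$. Substituting this into the first line of \eqref{eq:transferFunctionODE} and using that $P_1$ is invertible gives
\begin{equation*}
   v'(\xi) = P_1^{-1}\bigl( s \mathcal{H}^{-1}(\xi) - P_0(\xi) \bigr) v(\xi) = -P_1^{-1}\bigl(P_0(\xi) - s\mathcal{H}^{-1}(\xi)\bigr) v(\xi),
\end{equation*}
which is precisely equation \eqref{eq:fundamentalSolutionODE}. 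Hence any solution satisfies $v(\xi) = \Psi_\xi^s\, v(a)$, and in particular $v(b) = \Psi_b^s\, v(a)$.

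Now I would rewrite the input and output boundary equations in \eqref{eq:transferFunctionODE} in terms of the single unknown $v(a)\in\mathbb{K}^n$. Namely,
\begin{equation*}
    u_0 = \widetilde{W}_B \begin{bmatrix} v(b) \\ v(a) \end{bmatrix} = \widetilde{W}_B \begin{bmatrix} \Psi_b^s \\ \idMatrix{n} \end{bmatrix} v(a), \qquad \mathbf{G}(s) u_0 = \widetilde{W}_C \begin{bmatrix} \Psi_b^s \\ \idMatrix{n} \end{bmatrix} v(a).
\end{equation*}
So once the matrix $M_B(s) := \widetilde{W}_B \left[ \begin{smallmatrix} \Psi_b^s \\ \idMatrix{n} \end{smallmatrix} \right] \in \mathbb{K}^{n\times n}$ is known to be invertible, solving for $v(a)$ and substituting into the output equation yields \eqref{eq:transferFuncExpression}.

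The one step that needs genuine justification is the invertibility of $M_B(s)$ for $s \in \rho(A)$. I would argue by contraposition: suppose there exists $v_a \neq 0$ with $M_B(s) v_a = 0$. Set $v(\xi) := \Psi_\xi^s v_a$ and $x_0 := \mathcal{H}^{-1} v$. Then $v$ solves \eqref{eq:fundamentalSolutionODE}, and because $\mathcal{H}^{-1}, P_0 \in L^\infty$ and $v$ is continuous (by continuity of $\zeta\mapsto\Psi_\zeta^s$, \cite[Cor.~A.2]{a_WaurickZwart23AsymptoticStabilityPHS}), one has $v' \in L^\infty \subset L^2$, so $\mathcal{H}x_0 = v \in H^1([a,b],\mathbb{K}^n)$. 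The vanishing of $M_B(s) v_a$ is exactly the homogeneous boundary condition defining $\dom(A)$, hence $x_0 \in \dom(A)$; and the ODE rearranges to $(sI - A)x_0 = 0$. Since $x_0 \neq 0$ (as $v_a \neq 0$ and $\mathcal{H}$ is uniformly positive), this contradicts $s \in \rho(A)$.

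The only subtlety I expect is this last regularity/domain check, since one must confirm that the candidate $x_0$ actually lies in $\dom(A)$ as defined in Assumption \ref{ass:systemClassAssumptions}(\ref{ass:systemClassAssumptions:semigroup}); everything else is a direct and essentially algebraic manipulation of the ODE and its boundary data.
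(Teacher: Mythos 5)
Your proposal is correct and follows essentially the same route as the paper: use the fundamental solution to write $(\mathcal{H}x_0)(\zeta)=\Psi_\zeta^s(\mathcal{H}x_0)(a)$, rewrite both boundary relations in terms of the single vector $(\mathcal{H}x_0)(a)$, and solve. The one point where you diverge is the invertibility of $\widetilde{W}_B\left[\begin{smallmatrix}\Psi_b^s \\ \idMatrix{n}\end{smallmatrix}\right]$: the paper obtains it in one line from the unique solvability of \eqref{eq:transferFunctionODE} for every $u_0$ (existence gives surjectivity of $v(a)\mapsto u_0$, uniqueness gives injectivity), whereas you give a contrapositive spectral argument producing an eigenfunction of $A$ at $s$; your version is more self-contained, and the regularity/domain check you flag ($v'\in L^\infty\subset L^2$ on the bounded interval, hence $\mathcal{H}x_0\in H^1$, and the kernel condition being exactly the boundary condition in $\dom(A)$) does go through.
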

\begin{proof}
    We can write the solution of the first equation from \eqref{eq:transferFunctionODE} using  $\Psi_\zeta^s$ as $x_0 (\zeta) = \Psi_\zeta^s \left( \mathcal{H}x_0 \right)(a)$, so that the boundary conditions become
    \begin{align*}
        u_0 = \widetilde{W}_B \begin{bmatrix}
        \Psi_b^s \\ \idMatrix{n}
    \end{bmatrix} \left( \mathcal{H}x \right)(a), \qquad
    \mathbf{G}(s) u_0 = \widetilde{W}_C \begin{bmatrix}
        \Psi_b^s \\ \idMatrix{n}
    \end{bmatrix} \left( \mathcal{H}x \right)(a),
    \end{align*}
    from which the claim follows by solving for $\mathbf{G}(s)$. The invertibility of $\widetilde{W}_B \left[ \begin{smallmatrix}
            \Psi_b^s \\ \idMatrix{n}
        \end{smallmatrix} \right]$ thereby follows from the existence of a unique $\mathbf{G}(s)$ solving Equation~\eqref{eq:transferFunctionODE}.
\end{proof}
\begin{rmrk}
    Note that for a system with constant coefficients the fundamental solution is given by
    \begin{equation}
    \label{eq:fundamentalSolutionConstantCoefficients}
        \Psi_\zeta^s = e^{- P_1 ^{-1} \left( P_0 - s \mathcal{H}^{-1} \right) (\zeta - a)}.
    \end{equation}
    For a non-constant Hamiltonian for which $P_1 ^{-1} \left( P_0(\xi) - s \mathcal{H}^{-1}(\xi) \right)$  commutes with $\int_a^\zeta P_1 ^{-1} \left( P_0(\tau) - s \mathcal{H}^{-1}(\tau) \right) \intd \tau$ for all $\xi, \zeta \in [a,b]$ we have that
    \begin{equation}
    \label{eq:fundamentalSolutionVaryingCoefficientsCommuting}
        \Psi_\zeta^s = e^{- P_1 ^{-1} \int_a^\zeta \left( P_0 - s \mathcal{H}^{-1} (\tau)\right) \intd \tau}.
    \end{equation}
\end{rmrk}

\subsection{Decomposing $\widetilde{W}_B$}
As $P_1$ is self-adjoint and invertible we have the decomposition $\mathbb{K}^n = E_+ \oplus E_-$ with
\begin{align*}
    E_+ := \left\lbrace v \in \mathbb{K}^n \middle| \exists \lambda > 0 : P_1 v = \lambda v \right\rbrace, \quad
    E_- := \left\lbrace v \in \mathbb{K}^n \middle| \exists \lambda < 0 : P_1 v = \lambda v \right\rbrace.
\end{align*}
Let $\iota_\pm: E_\pm \rightarrow \mathbb{K}^n$ denote the canonical embeddings and define
\begin{align}
\label{eq:defQOperators}
    P_1^\pm := \iota_\pm^* \left(\pm P_1\right) \iota_\pm: E_\pm \rightarrow E_\pm, \qquad
    Q_\pm := \iota_\pm \left( P^\pm_1\right)^\frac{1}{2} \iota^*_\pm: \mathbb{K}^n \rightarrow \mathbb{K}^n,
\end{align}
where the second definition is possible as $P_1^\pm$ are both strictly positive self-adjoint. Note that, as $P_\pm := \iota_\pm \iota_\pm^*: \mathbb{K}^n \rightarrow \mathbb{K}^n$ are the orthogonal projections on $E_\pm$ and since $E_\pm$ are invariant under $P_1$, we have that
\begin{equation*}
    P_1 = Q_+ Q_+ - Q_- Q_- \qquad \andMath \qquad \idMatrix{n} = Q_+ P_1^{-1} Q_+ - Q_- P_1^{-1} Q_-.
\end{equation*}
For the further study of the transfer function as expressed in Equation \eqref{eq:transferFuncExpression}, it will be beneficial if the input matrix $\widetilde{W}_B$ can be decomposed in a particular way with respect to the matrices $Q_+$ and $Q_-$. We believe the following result providing this decomposition to be of interest in its own right.
\begin{prpstn}
\label{prop:decompositionExistence}
Let \pHstand be a hBCS satisfying Assumption \ref{ass:systemClassAssumptions} and let $P_\pm$ and $Q_\pm$ be defined as above. Then the following holds:
\begin{enumerate}
    \item There exist unique matrices $J, L \in \mathbb{K}^{n \times n}$ such that
        \begin{align}
        \label{eq:generalWBdecomp}
            \widetilde{W}_B = \begin{pmatrix}
                J Q_+ - L Q_- & J Q_- - L Q_+
                \end{pmatrix}.
        \end{align}
        In that case $J = \widetilde{W}_B \begin{bmatrix}
            Q_+ \\ -Q_-
        \end{bmatrix} P_1 ^{-1}$ and $L = \widetilde{W}_B \begin{bmatrix}
            Q_- \\ -Q_+
        \end{bmatrix} P_1 ^{-1}$.
    \item The following statements are equivalent:
    \begin{enumerate}
        \item  There exist matrices $K, M \in \mathbb{K}^{n \times n}$ with $K$ invertible such that
        \begin{align}
        \label{eq:inputMatrixDecompositionSpecialLemma}
            \widetilde{W}_B = K \begin{pmatrix}
                Q_+ - M Q_- & Q_- - M Q_+
                \end{pmatrix}.
        \end{align}
        \item The matrix $\widetilde{W}_B \begin{bmatrix}
            P_+ \\ P_-
        \end{bmatrix}$ is invertible.
        \item The operator $A_{\idMatrix{}} := P_1 \partialDer{}{\xi} + P_0 $ with domain
    \begin{align*}
    \dom(A_{\idMatrix{}}) = \set{x_0 \in X \, \middle| \, x_0 \in H^1\left( [a,b], \mathbb{K}^n \right), \widetilde{W}_B \begin{bmatrix}
        x_0(b) \\ x_0(a) 
    \end{bmatrix} = 0 }
    \end{align*}
    is the infinitesimal generator of a $C_0$-semigroup on $\lpSpaceDT{2}{[a,b]}{\mathbb{K}^n}$.
    \item The matrix $J$ from Part 1 is invertible.
    \end{enumerate}
    Furthermore, in this case $K = \widetilde{W}_B \begin{bmatrix}
            Q_+ \\ -Q_-
        \end{bmatrix} P_1 ^{-1}$ and  $M = K^{-1} \widetilde{W}_B \begin{bmatrix}
            Q_- \\ -Q_+
        \end{bmatrix} P_1 ^{-1}$.
\end{enumerate}
\end{prpstn}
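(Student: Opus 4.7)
The plan is to dispatch Part 1 by direct linear algebra, then reduce the three algebraic items (a), (b), (d) in Part 2 to the single statement ``$J$ is invertible'', and finally connect this to the semigroup-generation condition (c) via a standard characterization from the port-Hamiltonian literature. The computational facts about $Q_\pm$ and $P_\pm = \iota_\pm \iota_\pm^*$ that will be used throughout are $Q_+ Q_- = Q_- Q_+ = 0$ (from $E_+ \perp E_-$), $Q_\pm P_\pm = Q_\pm$, $Q_\pm P_\mp = 0$, $Q_+^2 - Q_-^2 = P_1$, together with the identity $Q_+ P_1^{-1} Q_+ - Q_- P_1^{-1} Q_- = \idMatrix{n}$ (with $Q_\pm P_1^{-1} Q_\mp = 0$) already recorded in the text.

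For Part 1, I would first prove uniqueness: assuming $\widetilde{W}_B = \bigl(\, J Q_+ - L Q_-,\; J Q_- - L Q_+ \,\bigr)$ and multiplying on the right by $\left[\begin{smallmatrix} Q_+ \\ -Q_- \end{smallmatrix}\right]$, the mixed terms vanish by $Q_+ Q_- = Q_- Q_+ = 0$ and yield $\widetilde{W}_B \left[\begin{smallmatrix} Q_+ \\ -Q_- \end{smallmatrix}\right] = J(Q_+^2 - Q_-^2) = J P_1$; symmetrically $\widetilde{W}_B \left[\begin{smallmatrix} Q_- \\ -Q_+ \end{smallmatrix}\right] = L P_1$. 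Since $P_1$ is invertible, this fixes $J$ and $L$ as stated. Existence then follows by plugging these candidates back in and simplifying with $\idMatrix{n} = Q_+ P_1^{-1} Q_+ - Q_- P_1^{-1} Q_-$ (and the vanishing of the mixed products) to recover the two blocks of $\widetilde{W}_B$.

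For Part 2, the equivalence (a) $\Leftrightarrow$ (d) is immediate from uniqueness: expanding (a) places $\widetilde{W}_B$ into the form of Part 1 with $J = K$ and $L = KM$, so $K$ is invertible iff $J$ is, in which case $M = K^{-1} L = J^{-1} L$, reproducing the stated formulas for $K$ and $M$. For (b) $\Leftrightarrow$ (d) I would compute
\[
    \widetilde{W}_B \begin{bmatrix} P_+ \\ P_- \end{bmatrix}
    = (J Q_+ - L Q_-) P_+ + (J Q_- - L Q_+) P_-
    = J (Q_+ + Q_-),
\]
using $Q_\pm P_\pm = Q_\pm$ and $Q_\pm P_\mp = 0$. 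Since $Q_+ + Q_-$ is self-adjoint with $(Q_+ + Q_-)^2 = Q_+^2 + Q_-^2 = P_1(P_+ - P_-) = |P_1|$ strictly positive, $Q_+ + Q_-$ is invertible, so (b) and (d) coincide.

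The only genuinely nontrivial step is identifying these with the generator condition (c). The underlying picture is standard: for the first-order hyperbolic system $A_{\idMatrix{}} = P_1 \partial_\xi + P_0$ on $[a,b]$, the characteristics associated with $E_+$ (positive eigenvalues of $P_1$) are left-going and enter the domain at $\xi = b$, while $E_-$ characteristics are right-going and enter at $\xi = a$; so well-posedness amounts to the boundary condition $\widetilde{W}_B \left[\begin{smallmatrix} x(b) \\ x(a) \end{smallmatrix}\right] = 0$ determining the incoming components $(P_+ x(b),\, P_- x(a))$ uniquely from the outgoing ones $(P_- x(b),\, P_+ x(a))$. Splitting $\left[\begin{smallmatrix} x(b) \\ x(a) \end{smallmatrix}\right] = \left[\begin{smallmatrix} P_+ x(b) \\ P_- x(a) \end{smallmatrix}\right] + \left[\begin{smallmatrix} P_- x(b) \\ P_+ x(a) \end{smallmatrix}\right]$, the incoming-to-value map is exactly $v \mapsto \widetilde{W}_B \left[\begin{smallmatrix} P_+ \\ P_- \end{smallmatrix}\right] v$, yielding (c) $\Leftrightarrow$ (b). Rigorously I would invoke the classical characterization of $C_0$-generation for $P_1 \partial_\xi$-type operators on $L^2([a,b],\mathbb{K}^n)$ with linear boundary conditions, e.g.\ from Chapter 13 of \cite{b_JacobZwart2012} or \cite{a_ZwartGorrecMaschkeVillegas10WellPosedness}; by Remark \ref{rem:assumptionRemarks}(2) the zero-order term $P_0$ is immaterial. \emph{The main obstacle} lies precisely here: the relevant generation theorems in the literature are frequently formulated for \emph{contraction} semigroups (e.g.\ under the $\widetilde{W}_B R_0^{-1} \Sigma (\widetilde{W}_B R_0^{-1})^* \geq 0$ condition of Remark \ref{rem:assumptionRemarks}(3)) or in parametrizations different from $\widetilde{W}_B \left[\begin{smallmatrix} P_+ \\ P_- \end{smallmatrix}\right]$, so extracting the general $C_0$ statement in exactly the required form will require some care.
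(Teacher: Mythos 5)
Your proof is correct and follows essentially the same route as the paper: the explicit formulas $J P_1 = \widetilde{W}_B \left[\begin{smallmatrix} Q_+ \\ -Q_- \end{smallmatrix}\right]$, $L P_1 = \widetilde{W}_B \left[\begin{smallmatrix} Q_- \\ -Q_+ \end{smallmatrix}\right]$ for Part 1, the identification $K = J$, $L = KM$ for (a)$\Leftrightarrow$(d), and the observation that $\widetilde{W}_B \left[\begin{smallmatrix} P_+ \\ P_- \end{smallmatrix}\right]$ differs from $J$ by an invertible factor for (b)$\Leftrightarrow$(d) (the paper writes this factor as $(Q_+ - Q_-)P_1^{-1}$ rather than your $Q_+ + Q_-$; both work). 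The one obstacle you flag — that generation theorems in the literature are often stated only for contraction semigroups — is resolved in the paper by citing Theorem~1.5 of \cite{a_JacobMorrisZwart15C0Semigroups}, which characterizes $C_0$-semigroup generation (not merely contractive generation) for precisely these first-order hyperbolic operators by the spanning/invertibility condition you derive, so no additional care is in fact needed there.
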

\begin{proof}
\begin{enumerate}
    \item Let $\widetilde{W}_B = \begin{pmatrix}
        W_1 & W_2
    \end{pmatrix}$ and define 
    \begin{align*}
        J &:= \widetilde{W}_B \begin{bmatrix}
            Q_+ \\ -Q_-
        \end{bmatrix} P_1 ^{-1} = \left( W_1 Q_+ - W_2 Q_- \right) P_1 ^{-1} \\
        L &:= \widetilde{W}_B \begin{bmatrix}
            Q_- \\ -Q_+
        \end{bmatrix} P_1 ^{-1} = \left( W_1 Q_- - W_2 Q_+ \right) P_1 ^{-1}.
    \end{align*}
    Then $J Q_+ - L Q_- =  W_1 \left( Q_+ P_1 ^{-1} Q_+ - Q_- P_1 ^{-1} Q_- \right) = W_1$ and similarly $J Q_- - L Q_+ =  W_2$, so that $\widetilde{W}_B = \begin{pmatrix}
        J Q_+ - L Q_- & J Q_- - L Q_+
    \end{pmatrix}$.

    Conversely, assume that $\widetilde{W}_B$ is given in the form \eqref{eq:generalWBdecomp}. Then
    \begin{align*}
        \widetilde{W}_B \begin{bmatrix}
            Q_+ \\ Q_-
        \end{bmatrix} P_1 ^{-1} = \left( J Q_+ - L Q_- \right) Q_+ P_1^{-1} - \left( J Q_- - L Q_+ \right) Q_- P_1^{-1} = J
    \end{align*}
    and analogously for $L$, which shows the uniqueness.
    \item By Part 1 of the Lemma and the definitions of $P_\pm$ and $Q_\pm$, clearly the decomposition \eqref{eq:inputMatrixDecompositionSpecialLemma} exists if and only if 
    \begin{equation*}
        J = \widetilde{W}_B \begin{bmatrix}
            Q_+ \\ -Q_-
        \end{bmatrix} P_1 ^{-1} = \widetilde{W}_B \begin{bmatrix}
            P_+ \\ P_-
        \end{bmatrix} \left( Q_+ - Q_- \right) P_1 ^{-1}
    \end{equation*}
    is invertible, giving the equivalence of (a) and (d). Further, as both $Q_+ - Q_-$ and $P_1^{-1}$ are invertible, this is the case if and only if (b) holds.

        The equivalence of (b) and (c) is a consequence of Theorem 1.5 in \cite{a_JacobMorrisZwart15C0Semigroups}.

        Finally, the expressions for $K$ and $M$ follow then directly from Part~(1). \qedhere
\end{enumerate}
\end{proof}
\begin{rmrk}
\label{rem:propertiesDecomposition}
    \begin{enumerate}
        \item Note that statement (c) of Proposition~\ref{prop:decompositionExistence} could equivalently be formulated as the system \pHSmallBrack{n}{P_1}{P_0}{\idMatrix{n}}{\widetilde{W}_B}{\widetilde{W}_C} satisfying Assumption~\ref{ass:systemClassAssumptions}(5).
        \item A direct corollary of Proposition~\ref{prop:decompositionExistence} is that a decomposition \eqref{eq:inputMatrixDecompositionSpecialLemma} exists in particular if $\widetilde{W}_B$ satisfies Equation \eqref{eq:contractionCondition}, as in this case the semigroup generation property is independent of the Hamiltonian. This particular result was first derived by explicit calculation in \cite[Lem.~2.3]{a_TrostorffWaurick23ExponentialStabilityPHS}.
        \item If a decomposition \eqref{eq:inputMatrixDecompositionSpecialLemma} holds true, then by direct calculation we find that
        \begin{align*}
            \widetilde{W}_B R_0^{-1} \Sigma \left( \widetilde{W}_B R_0^{-1} \right)^* = K \left( \idMatrix{n} - M M^\ast \right) K^*,
        \end{align*}
        so that in particular Equation \eqref{eq:contractionCondition} is equivalent to $\idMatrix{n} \geq M M^\ast$ respectively $\matrixNorm{M}{2}{2} \leq 1$. This is the second half of \cite[Lem.~2.3]{a_TrostorffWaurick23ExponentialStabilityPHS}.
        \item An example of a hBCS where no decomposition \eqref{eq:inputMatrixDecompositionSpecialLemma} exists is given by $P_1 = \left[ \begin{smallmatrix}
            0 & 1 \\ 1 & 0
        \end{smallmatrix} \right]$, $P_0 = 0$, $\mathcal{H}(\xi) = \left[ \begin{smallmatrix}
            1 + \xi & 0 \\ 0 & 1
        \end{smallmatrix} \right]$ for $\xi \in [0,1]$ and $\widetilde{W}_B = \left[ \begin{smallmatrix}
            1 & 0 & -1 & 0 \\ 0 & 1 & 0 & 1
        \end{smallmatrix} \right]$. Then by \cite[Ex.~3.2]{a_JacobMorrisZwart15C0Semigroups} the operator $A_{\idMatrix{}}$ does not generate a $C_0$-semigroup and thus by Proposition~\ref{prop:decompositionExistence} no such decomposition exists.
    \end{enumerate}
\end{rmrk}
\noindent For the remainder of this subsection we now take the assumption that the equivalent statements from Proposition~\ref{prop:decompositionExistence} hold true.
\begin{assumption}
\label{ass:decompositionAssumption}
    There exist matrices $K, M \in \mathbb{K}^{n \times n}$ with $K$ invertible such that
    \begin{align}
    \label{eq:inputMatrixDecompositionSpecial}
    \widetilde{W}_B = K \begin{pmatrix}
        Q_+ - M Q_- & Q_- - M Q_+
    \end{pmatrix}.
\end{align}
\end{assumption}
We now consider the middle of the three parts from the expression for $\mathbf{G}(s)$ in Equation \eqref{eq:transferFuncExpression} closer. Note that with the decomposition \eqref{eq:inputMatrixDecompositionSpecial} we have
\begin{align*}
    \widetilde{W}_B \begin{bmatrix}
        \Psi_b^s \\ \idMatrix{n}
    \end{bmatrix} &= K \begin{pmatrix}
        Q_+ - M Q_- & Q_- - M Q_+
    \end{pmatrix} \begin{bmatrix}
        \Psi_b^s \\ \idMatrix{n}
    \end{bmatrix} \\ &= K \left( Q_- + Q_+ \Psi_b^s \right) - K M \left( Q_+ + Q_- \Psi_b^s \right).
\end{align*}

\begin{lmm}
\label{lem:VsInvertibility}
    Let Assumption~\ref{ass:decompositionAssumption} hold for \pHstand. Then the matrix $Q_- + Q_+ \Psi_b^s$ is invertible for any $s \in \rho ( \widetilde{A} )$ where 
    $\widetilde{A}$ is the operator associated to \pHSmallBrack{n}{P_1}{P_0}{\mathcal{H}}{\left[ \begin{smallmatrix}
        Q_+ & Q_-
    \end{smallmatrix} \right]}{\widetilde{W}_C}. In particular there exists $\alpha \in \realNum$ such that $\complNum_\alpha \subset \rho ( \widetilde{A} )$ and hence $Q_- + Q_+ \Psi_b^s$ is invertible for all $s \in \complNum_\alpha$.
\end{lmm}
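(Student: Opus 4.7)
The plan is to reduce the invertibility claim to an application of Proposition \ref{prop:transferFunctionExpression} applied to the auxiliary hBCS obtained from the original by replacing $\widetilde{W}_B$ with $\begin{pmatrix} Q_+ & Q_- \end{pmatrix}$ while keeping $P_1$, $P_0$, $\mathcal{H}$, and $\widetilde{W}_C$. The key observation is the trivial identity
\[ \begin{pmatrix} Q_+ & Q_- \end{pmatrix} \begin{bmatrix} \Psi_b^s \\ \idMatrix{n} \end{bmatrix} = Q_- + Q_+ \Psi_b^s, \]
so the matrix in question has exactly the form of the middle factor in Equation \eqref{eq:transferFuncExpression} associated with the auxiliary system, whose invertibility for $s$ in the corresponding resolvent set is precisely what that proposition provides.

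First I would verify that $\widetilde{A}$ generates a $C_0$-semigroup on $X$, which is both needed for $\rho(\widetilde{A})$ to be nonempty and for the half-plane statement. The matrix $\begin{pmatrix} Q_+ & Q_- \end{pmatrix}$ evidently admits the decomposition \eqref{eq:inputMatrixDecompositionSpecial} with $K = \idMatrix{n}$ (invertible) and $M = 0$; combining the equivalence (a)$\Leftrightarrow$(c) of Proposition \ref{prop:decompositionExistence}(2) with the Hamiltonian-independence statement in Remark \ref{rem:propertiesDecomposition}(2) then yields that $\widetilde{A}$ with the given $\mathcal{H}$ is indeed a generator. The second assertion of the lemma is then immediate from the standard fact that for any $C_0$-semigroup generator $\widetilde{A}$ there exists some $\alpha \in \realNum$ with $\complNum_\alpha \subset \rho(\widetilde{A})$ (e.g.\ any $\alpha$ strictly above the semigroup's growth bound).

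For the first assertion I would simply reuse the argument from the proof of Proposition \ref{prop:transferFunctionExpression}: for $s \in \rho(\widetilde{A})$, the boundary value problem \eqref{eq:transferFunctionODE} with the modified input matrix has a unique solution for every $u_0$, forcing $\begin{pmatrix} Q_+ & Q_- \end{pmatrix} \begin{bmatrix} \Psi_b^s \\ \idMatrix{n} \end{bmatrix}$ to be invertible. The only technical subtlety I anticipate -- and hence the main obstacle -- is that the auxiliary system need not strictly satisfy Assumption \ref{ass:systemClassAssumptions}(4), since there is no guarantee that the matrix obtained by stacking $\begin{pmatrix} Q_+ & Q_- \end{pmatrix}$ on top of $\widetilde{W}_C$ is invertible. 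However, this full-rank condition on the combined $\widetilde{W}_B,\widetilde{W}_C$-block plays no role in the invertibility argument inside the proof of Proposition \ref{prop:transferFunctionExpression}, which relies only on $s$ lying in the resolvent set of the operator determined by the $\widetilde{W}_B$-boundary condition; the reasoning therefore transfers verbatim.
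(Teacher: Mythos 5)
Your proposal is correct and follows essentially the same route as the paper: identify $Q_- + Q_+\Psi_b^s$ as the middle factor of \eqref{eq:transferFuncExpression} for the auxiliary system with input matrix $\left[\begin{smallmatrix} Q_+ & Q_- \end{smallmatrix}\right]$, note that the fundamental solution is unchanged, and invoke Proposition~\ref{prop:transferFunctionExpression} together with the existence of a right half-plane in the resolvent set of the generator. The one loose step is how you establish that $\widetilde{A}$ generates: the equivalence (a)$\Leftrightarrow$(c) of Proposition~\ref{prop:decompositionExistence} only gives generation for the \emph{identity} Hamiltonian, and the Hamiltonian-independence invoked from Remark~\ref{rem:propertiesDecomposition}(2) is itself conditional on \eqref{eq:contractionCondition}; you should close this by checking \eqref{eq:contractionCondition} for $\left[\begin{smallmatrix} Q_+ & Q_- \end{smallmatrix}\right]$ directly — the left-hand side equals $Q_+P_1^{-1}Q_+ - Q_-P_1^{-1}Q_- = \idMatrix{n} > 0$ (equivalently, apply Remark~\ref{rem:propertiesDecomposition}(3) with $K=\idMatrix{n}$, $M=0$) — which is precisely the one-line computation the paper uses.
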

\begin{proof}
    Note first that $\widetilde{A}$ indeed satisfies Assumption~\ref{ass:systemClassAssumptions}(5), as it generates a $C_0$-semigroup by \cite[Thm.~7.2.4, Thm.~10.3.1]{b_JacobZwart2012} as
    \begin{align*}
        \begin{bmatrix}
        Q_+ & Q_-
    \end{bmatrix} R_0^{-1} \Sigma \left( \begin{bmatrix}
        Q_+ & Q_-
    \end{bmatrix} R_0^{-1} \right)^* = Q_+ P_1^{-1} Q_+ - Q_- P_1^{-1} Q_- = \idMatrix{n} > 0.
    \end{align*}
    By Proposition~\ref{prop:transferFunctionExpression} the transfer function of the system \pHSmallBrack{n}{P_1}{P_0}{\mathcal{H}}{\left[ \begin{smallmatrix}
        Q_+ & Q_-
    \end{smallmatrix} \right]}{\widetilde{W}_C} can thus be represented on $\rho(\widetilde{A})$ as $\widetilde{\mathbf{G}}(s) = \widetilde{W}_C \left[ \begin{smallmatrix}
            \widetilde{\Psi}_b^s \\ \idMatrix{n}
        \end{smallmatrix} \right] \left( \left[ \begin{smallmatrix}
            Q_+ & Q_-
        \end{smallmatrix} \right] \left[ \begin{smallmatrix}
            \widetilde{\Psi}_b^s \\ \idMatrix{n}
        \end{smallmatrix} \right] \right)^{-1}$ where $\widetilde{\Psi}_\zeta^s$ is the fundamental solution associated to that system. Hence in particular $\left[ \begin{smallmatrix}
            Q_+ & Q_-
        \end{smallmatrix} \right] \left[ \begin{smallmatrix}
            \widetilde{\Psi}_b^s \\ \idMatrix{n}
        \end{smallmatrix} \right] = Q_- + Q_+  \widetilde{\Psi}_b^s $ is invertible. But from the definition of the fundamental solution it is clear that $ \widetilde{\Psi}_\zeta^s = \Psi_\zeta^s$ thus proving the claim.
\end{proof}
\begin{rmrk}
    Under the additional assumptions that $\mathcal{H} \in C^1\left( [a,b], \mathbb{K}^{n \times n} \right)$ and $P_0 = - P_0^*$, the operator $\widetilde{A}$ even generates an exponentially stable contraction semigroup \cite[Lem.~9.1.4]{b_JacobZwart2012}. Then there even exists a negative $\alpha < 0$ satisfying the conditions in Lemma \ref{lem:VsInvertibility}. Thus $Q_- + Q_+ \Psi_b^s$ and $(Q_- + Q_+ \Psi_b^s)^{-1}$ are then in particular defined on the imaginary axis and the whole positive right half plane.
\end{rmrk}

\begin{dfntn}
\label{def:VsUsDef}
    For $s \in \complNum_\alpha$ with $\alpha$ from Lemma \ref{lem:VsInvertibility}, define the matrices
    \begin{equation*}
        \mathbf{V}(s) := Q_- + Q_+ \Psi_b^s \qquad \andMath \qquad \mathbf{U}(s) := \left( Q_+ + Q_- \Psi_b^s \right) \mathbf{V}(s)^{-1},
    \end{equation*}
    where $\Psi_\xi^s$ is the fundamental solution associated to the hBCS and $Q_\pm$ are the projection operators defined in Equation \eqref{eq:defQOperators}. 
\end{dfntn}
Using these matrices we can now simplify the expression for the transfer function.

\begin{lmm}
\label{lem:transferFunctionUexpr}
The transfer function of a hBCS satisfying Assumptions \ref{ass:systemClassAssumptions} and \ref{ass:decompositionAssumption} is given on $s \in \rho(A) \cap \complNum_\alpha$ with $\alpha$ from Lemma \ref{lem:VsInvertibility} by $\mathbf{G}(s) = \mathbf{Z}(s) \left( 1 - M \mathbf{U}(s) \right)^{-1} K^{-1},$
    where $\mathbf{Z}(s) = \widetilde{W}_C \begin{bmatrix}
        \Psi_b^s \\ \idMatrix{n}
    \end{bmatrix} \mathbf{V}(s)^{-1}$. This holds in particular on $\complNum_{\widetilde{\alpha}}\subset \rho(A) \cap \complNum_\alpha$ for some $\widetilde{\alpha} \geq \alpha$.
\end{lmm}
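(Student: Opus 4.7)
The plan is to start from the explicit formula for the transfer function in Proposition~\ref{prop:transferFunctionExpression}, namely
\begin{equation*}
    \mathbf{G}(s) = \widetilde{W}_C \begin{bmatrix} \Psi_b^s \\ \idMatrix{n} \end{bmatrix} \left( \widetilde{W}_B \begin{bmatrix} \Psi_b^s \\ \idMatrix{n} \end{bmatrix} \right)^{-1},
\end{equation*}
and to rewrite the factor $\widetilde{W}_B \left[ \begin{smallmatrix} \Psi_b^s \\ \idMatrix{n} \end{smallmatrix} \right]$ by plugging in the decomposition \eqref{eq:inputMatrixDecompositionSpecial} granted by Assumption~\ref{ass:decompositionAssumption}. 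A direct multiplication yields
\begin{equation*}
    \widetilde{W}_B \begin{bmatrix} \Psi_b^s \\ \idMatrix{n} \end{bmatrix} = K\bigl( Q_- + Q_+ \Psi_b^s \bigr) - KM\bigl( Q_+ + Q_- \Psi_b^s \bigr).
\end{equation*}
The first bracket is precisely $\mathbf{V}(s)$, while by Definition~\ref{def:VsUsDef} the second bracket equals $\mathbf{U}(s) \mathbf{V}(s)$. Factoring $\mathbf{V}(s)$ to the right therefore gives $\widetilde{W}_B \left[ \begin{smallmatrix} \Psi_b^s \\ \idMatrix{n} \end{smallmatrix} \right] = K\bigl( \idMatrix{n} - M \mathbf{U}(s) \bigr) \mathbf{V}(s)$.

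The next step is to invert this product on the set $s \in \rho(A) \cap \complNum_\alpha$. By Proposition~\ref{prop:transferFunctionExpression} the whole product is invertible on $\rho(A)$, while $K$ is invertible by Assumption~\ref{ass:decompositionAssumption} and $\mathbf{V}(s)$ is invertible on $\complNum_\alpha$ by Lemma~\ref{lem:VsInvertibility}. Therefore the middle factor $\idMatrix{n} - M \mathbf{U}(s)$ is necessarily invertible on $\rho(A) \cap \complNum_\alpha$, and one obtains
\begin{equation*}
    \left( \widetilde{W}_B \begin{bmatrix} \Psi_b^s \\ \idMatrix{n} \end{bmatrix} \right)^{-1} = \mathbf{V}(s)^{-1} \bigl( \idMatrix{n} - M \mathbf{U}(s) \bigr)^{-1} K^{-1}.
\end{equation*}
Inserting this back into the expression for $\mathbf{G}(s)$ and recognising $\mathbf{Z}(s) = \widetilde{W}_C \left[ \begin{smallmatrix} \Psi_b^s \\ \idMatrix{n} \end{smallmatrix} \right] \mathbf{V}(s)^{-1}$ finishes the identity.

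For the last statement I would argue that, since $A$ generates a $C_0$-semigroup by Assumption~\ref{ass:systemClassAssumptions}(5), the resolvent set $\rho(A)$ contains some right half-plane $\complNum_{\beta}$. Choosing $\widetilde{\alpha} := \max\{\alpha, \beta\}$, the half-plane $\complNum_{\widetilde{\alpha}}$ lies in $\rho(A) \cap \complNum_\alpha$ and the established identity applies there. I do not anticipate any serious obstacle in this proof; the only subtlety is justifying the invertibility of $\idMatrix{n} - M\mathbf{U}(s)$, which, as noted, is forced by the invertibility of the two outer factors together with the global invertibility of the product on $\rho(A)$.
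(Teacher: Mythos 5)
Your proposal is correct and follows essentially the same route as the paper: substitute the decomposition of $\widetilde{W}_B$ from Assumption~\ref{ass:decompositionAssumption} into the formula of Proposition~\ref{prop:transferFunctionExpression}, factor $\widetilde{W}_B \left[ \begin{smallmatrix} \Psi_b^s \\ \idMatrix{n} \end{smallmatrix} \right] = K ( \idMatrix{n} - M\mathbf{U}(s) ) \mathbf{V}(s)$, and invert. You are in fact slightly more careful than the paper's one-line ``direct calculation'', since you explicitly justify the invertibility of the middle factor and the existence of the half-plane $\complNum_{\widetilde{\alpha}}$.
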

\begin{proof}
This follows from direct calculation, as
    \begin{align*}
        \mathbf{G}(s) &= \widetilde{W}_C \begin{bmatrix}
            \Psi_b^s \\ \idMatrix{n}
        \end{bmatrix} \left( \widetilde{W}_B \begin{bmatrix}
            \Psi_b^s \\ \idMatrix{n}
        \end{bmatrix} \right)^{-1} = \widetilde{W}_C \begin{bmatrix}
            \Psi_b^s \\ \idMatrix{n}
        \end{bmatrix} \left( K \mathbf{V}(s) - K M \mathbf{U}(s) \mathbf{V}(s) \right)^{-1} \\ 
        &= \widetilde{W}_C \begin{bmatrix}
            \Psi_b^s \\ \idMatrix{n}
        \end{bmatrix} \mathbf{V}(s)^{-1} \left( \idMatrix{n} - M \mathbf{U}(s) \right)^{-1} K^{-1} 
        = \mathbf{Z}(s) \left( \idMatrix{n} - M \mathbf{U}(s) \right)^{-1} K^{-1} . \qedhere
    \end{align*}
\end{proof}
As mentioned before, one problem in deciding the question of BIBO stability using the characterisation from \cite{a_SchwenningerWierzbaZwart24BIBO} is finding the inverse Laplace transform of $\mathbf{G}$. Even in the case of a constant Hamiltonian where the fundamental solution $\Psi_\zeta^s$ takes the simple form \eqref{eq:fundamentalSolutionConstantCoefficients} of a matrix exponential, we see that due to the inverse $\left( \widetilde{W}_B \left[ \begin{smallmatrix} \Psi_b^s \\ \idMatrix{n} \end{smallmatrix} \right] \right)^{-1}$ it will in general be difficult to find that inverse transform.
 With the representation from Lemma \ref{lem:transferFunctionUexpr} however, this problem can be  circumnavigated in some cases and lead to a sufficient condition for BIBO stability.
\begin{lmm}
\label{lem:NeumannSeriesRewriting}
Let the assumptions of Lemma~\ref{lem:transferFunctionUexpr} hold.
\begin{enumerate}
    \item Let $s \in \complNum_{\widetilde{\alpha}}$ with $\widetilde{\alpha}$ from Lemma~\ref{lem:transferFunctionUexpr}. Then if the series $\sum_{k=0}^\infty \left( M \mathbf{U}(s) \right)^k$ converges we have that
    \begin{align}
    \label{eq:transferFunctionNeumannSum}
        \mathbf{G}(s) = \mathbf{Z}(s) \left( \sum_{k=0}^\infty \left( M \mathbf{U}(s) \right)^k \right) K^{-1}.
    \end{align}
    \item If 
    $\laplaceInvTr{\mathbf{Z}}$ and $\laplaceInvTr{\left( M \mathbf{U} \right)^k}$ exist as measures of bounded total variation for all $k>0$ and in addition
    \begin{equation*}
        \sum_{k=0}^\infty \left\| \laplaceInvTr{\left( M \mathbf{U} \right)^k } \right\|_\mathcal{M} < \infty,
    \end{equation*}
    then \pHSmallBrack{n}{P_1}{P_0}{\mathcal{H}}{\widetilde{W}_B}{\widetilde{W}_C} is BIBO stable and we have 
    \begin{align}
    \label{eq:transferFunctionInvLaplTransNeumannSum}
        \laplaceInvTr{\mathbf{G}} = \laplaceInvTr{\mathbf{Z}} \ast \left( \sum_{k=0}^\infty \laplaceInvTr{\left( M \mathbf{U} \right)^k } \right) K^{-1}.
    \end{align}
\end{enumerate}
\end{lmm}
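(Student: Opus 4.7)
The plan is to treat Part 1 as a routine manipulation of the formula in Lemma~\ref{lem:transferFunctionUexpr}, and then to lift that identity to the level of measures via the Laplace transform, exploiting that $\mathcal{M}(\posRealNum, \mathbb{K}^{n \times n})$ is a Banach algebra under (matrix-valued) convolution with the norm $\|\cdot\|_\mathcal{M}$ and that the Laplace transform turns convolution into pointwise matrix multiplication.

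For Part 1, I would start from $\mathbf{G}(s) = \mathbf{Z}(s)(\idMatrix{n} - M\mathbf{U}(s))^{-1} K^{-1}$ on $\complNum_{\widetilde{\alpha}}$, supplied by Lemma~\ref{lem:transferFunctionUexpr}. At any $s$ at which $\sum_{k=0}^\infty (M\mathbf{U}(s))^k$ converges, the partial sums $S_N$ satisfy the telescoping identity $(\idMatrix{n} - M\mathbf{U}(s)) S_N = \idMatrix{n} - (M\mathbf{U}(s))^{N+1}$; convergence forces $(M\mathbf{U}(s))^{N+1} \to 0$, so letting $N \to \infty$ identifies the limit with $(\idMatrix{n} - M\mathbf{U}(s))^{-1}$ and yields \eqref{eq:transferFunctionNeumannSum}.

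For Part 2, set $\mu_k := \laplaceInvTr{(M\mathbf{U})^k} \in \mathcal{M}(\posRealNum, \mathbb{K}^{n \times n})$; the summability hypothesis, combined with the completeness of this space in $\|\cdot\|_\mathcal{M}$, supplies a measure $\mu := \sum_k \mu_k$ with $\|\mu\|_\mathcal{M} \leq \sum_k \|\mu_k\|_\mathcal{M} < \infty$. Continuity of the Laplace transform as a map from $\mathcal{M}$ into bounded matrix-valued holomorphic functions on $\complNum_0$ then gives $\laplaceTr{\mu}(s) = \sum_k (M\mathbf{U}(s))^k$ pointwise on $\complNum_0$. Convolving $\nu := \laplaceInvTr{\mathbf{Z}}$ on the left with $\mu$ and multiplying the result fibrewise on the right by $K^{-1}$ produces a measure in $\mathcal{M}$ of total variation at most $\|\nu\|_\mathcal{M} \, \|\mu\|_\mathcal{M} \, \matrixNorm{K^{-1}}{2}{2}$, whose Laplace transform on a common right half-plane equals $\mathbf{Z}(s) \bigl(\sum_k (M\mathbf{U}(s))^k\bigr) K^{-1}$, which by Part 1 and Lemma~\ref{lem:transferFunctionUexpr} coincides with $\mathbf{G}(s)$. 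Uniqueness of the inverse Laplace transform (as a measure) identifies this measure as $\laplaceInvTr{\mathbf{G}}$, establishing \eqref{eq:transferFunctionInvLaplTransNeumannSum}; BIBO stability then follows from the characterisation in \cite[Thm.~3.4]{a_SchwenningerWierzbaZwart24BIBO}.

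The only real obstacle is administrative rather than conceptual: one must track half-planes carefully so that all Laplace transforms and their inverses are consistently defined and compared on a common domain, and one must formulate the convolution-to-multiplication identity in a matrix-valued form that respects the ordering $\mathbf{Z}(s)(M\mathbf{U}(s))^k K^{-1}$. Both points can be handled directly using the vector-valued framework developed in \cite[Sec.~3.2]{b_GripenbergLondenStaffans1990}.
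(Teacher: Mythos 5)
Your proposal is correct and follows essentially the same route as the paper: Part 1 is the standard Neumann-series telescoping argument applied to the representation $\mathbf{G}(s) = \mathbf{Z}(s)(\idMatrix{n} - M\mathbf{U}(s))^{-1}K^{-1}$ from Lemma~\ref{lem:transferFunctionUexpr} (which the paper dismisses as ``clear''), and Part 2 uses exactly the paper's argument of forming the limit measure in $\mathcal{M}$ from the summability hypothesis, transferring the identity via term-by-term Laplace transformation and continuity, and invoking \cite[Thm.~3.4]{a_SchwenningerWierzbaZwart24BIBO} for BIBO stability. Your additional remarks on the Banach-algebra structure of $\mathcal{M}$ and uniqueness of the inverse Laplace transform merely make explicit what the paper leaves implicit.
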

\begin{proof}
    \begin{enumerate}
        \item Clear.
        \item By the assumptions the right hand side of Equation \eqref{eq:transferFunctionInvLaplTransNeumannSum} is well-defined as a measure of bounded total variation $h \in \mathcal{M} \left( \posRealNum, \mathbb{K}^{n \times n} \right)$. 

        Term-by-term Laplace transforming of the right hand side of Equation \eqref{eq:transferFunctionInvLaplTransNeumannSum} yields the right hand side of Equation \eqref{eq:transferFunctionNeumannSum} and the continuity of the Laplace transform implies that this sum indeed converges to $\laplaceTr{h}$ or equivalently that Equation \eqref{eq:transferFunctionInvLaplTransNeumannSum} holds. BIBO stability then follows from \cite[Thm.~3.4]{a_SchwenningerWierzbaZwart24BIBO}. 
    \end{enumerate}
\end{proof}

\subsection{Equivalent diagonal system}
\label{subsec:diagonalSystem}

For the rest of this section we will now take the following additional assumption.
\begin{assumption}
\label{ass:pHSDiagAssumptions}
    There exist $S, \mathcal{H}^D \in C^1\left( [a,b], \mathbb{K}^{n\times n} \right)$ with $S$ pointwise invertible, $\mathcal{H}^D$ diagonal and positive such that $P_1 \mathcal{H} = S^{-1} P^D_1 \mathcal{H}^D S$, where 
    \begin{align*}
    P_1^D = \begin{pmatrix}
        \idMatrix{m} & 0 \\ 0 & -\idMatrix{n-m}
    \end{pmatrix} 
\end{align*}
for some $0 \leq m \leq n$.
\end{assumption}
\begin{rmrk}
    \begin{enumerate}
        \item It is a well-know result that any hBCS satisfying Assumption \ref{ass:pHSDiagAssumptions} is regular and $L^2$-well-posed \cite[Thm.~13.2.2]{b_JacobZwart2012}.
        \item Using the same splitting as in $P_1^D$ we can write $\mathcal{H}^D = \begin{pmatrix}
        \mathcal{H}^D_+ & 0 \\ 0 & \mathcal{H}^D_-
    \end{pmatrix}.$
Then $\mathcal{H}^D_+$ contains the positive and $-\mathcal{H}^D_-$ the negative eigenvalues of $P_1 \mathcal{H}$ on the respective diagonals. 
Furthermore the matrix $S^{-1}(\xi)$ then has the form
\begin{align*}
    S^{-1}(\xi) = \begin{pmatrix}
        e_1(\xi) & \cdots & e_m(\xi) & e_{m+1}(\xi) & \cdots & e_n(\xi)
    \end{pmatrix}
\end{align*}
with $e_1(\xi), \ldots , e_m(\xi)$ eigenvectors of $P_1 \mathcal{H}(\xi)$ corresponding to the eigenvalues in $\mathcal{H}^D_+(\xi)$ and with $e_{m+1}(\xi), \ldots, e_n(\xi)$ those corresponding to the ones in $-\mathcal{H}^D_-(\xi)$.
    \end{enumerate}
\end{rmrk}

\noindent We now consider the state transformation $\widetilde{x} = S x$. Then the PDE in Equation~\eqref{eq:pHSequation} describing the boundary control system becomes
\begin{align*}
    \partialDer{\widetilde{x}}{t}(\xi, t) 
    &= P_1^D \partialDer{}{\xi} \left( \mathcal{H}^D(\xi) \widetilde{x}(\xi, t) \right) + P_0^D(\xi) \left( \mathcal{H}^D(\xi) \widetilde{x}(\xi, t) \right),
\end{align*}
with $P_0^D (\xi) := S(\xi) \left[ \der{S^{-1}(\xi)}{\xi} + P_0 P_1^{-1} S^{-1} (\xi)  \right] P_1^D$. 

Similarly, the boundary control and observation also change under the state transformation and become
\begin{align*}
    u(t) &= \widetilde{W}_B \begin{bmatrix}
        P_1^{-1} S^{-1}(b) P_1^D & 0 \\
        0 & P_1^{-1} S^{-1}(a) P_1^D
    \end{bmatrix} \begin{bmatrix}
         \left( \mathcal{H}^D \widetilde{x} \right)(b,t) \\  \left( \mathcal{H}^D \widetilde{x} \right)(a,t) 
    \end{bmatrix} = \widetilde{W}_B^D \begin{bmatrix}
         \left( \mathcal{H}^D \widetilde{x} \right)(b,t) \\  \left( \mathcal{H}^D \widetilde{x} \right)(a,t) 
    \end{bmatrix},  \\
    y(t) &= \widetilde{W}_C \begin{bmatrix}
        P_1^{-1} S^{-1}(b) P_1^D & 0 \\
        0 & P_1^{-1} S^{-1}(a) P_1^D
    \end{bmatrix} \begin{bmatrix}
         \left( \mathcal{H}^D \widetilde{x} \right)(b,t) \\  \left( \mathcal{H}^D \widetilde{x} \right)(a,t) 
    \end{bmatrix} = \widetilde{W}_C^D \begin{bmatrix}
         \left( \mathcal{H}^D \widetilde{x} \right)(b,t) \\  \left( \mathcal{H}^D \widetilde{x} \right)(a,t) 
    \end{bmatrix}.
\end{align*}
Hence we find that the diagonalised system is just \pHSmallBrack{n}{P_1^D}{P_0^D}{\mathcal{H}_D}{\widetilde{W}_B^D}{\widetilde{W}_C^D}.
As the system resulting from the state transformation has the same input-output behaviour as the original system, they have in particular the same transfer function and thus one is BIBO stable if and only if the other one is.

However, applying the results from the previous section is much simpler in the diagonalised system due to the special structure of $P_1^D$. We can for example easily calculate the associated matrices 
$Q^D_+ = \left[ \begin{smallmatrix}
    \idMatrix{m} & 0 \\ 0 & 0
\end{smallmatrix} \right]$ 
and 
$Q^D_- = \left[ \begin{smallmatrix}
    0 & 0 \\ 0 & \idMatrix{n-m}
\end{smallmatrix} \right]$ 
used in the expression for the transfer function derived earlier.

Furthermore we find that the transformed input matrix $\widetilde{W}_B^D$ of the diagonalised system always possesses a decomposition of the form \eqref{eq:inputMatrixDecompositionSpecial}.
\begin{prpstn}
\label{prop:decompositionExistenceDiagonal}
    Let \pHSmallBrack{n}{P_1^D}{P_0^D}{\mathcal{H}_D}{\widetilde{W}_B^D}{\widetilde{W}_C^D} be the diagonalised version of a hBCS satisfying Assumptions \ref{ass:systemClassAssumptions} and \ref{ass:pHSDiagAssumptions}. Then the matrix $\widetilde{W}^D_B \begin{bmatrix}
            P^D_+ \\ P^D_-
        \end{bmatrix}$ is invertible and hence there exist matrices $K, M \in \mathbb{K}^{n \times n}$ with $K$ invertible such that
        \begin{align*}
            \widetilde{W}_B^D = K \begin{pmatrix}
                Q_+^D - M Q_-^D & Q_-^D - M Q_+^D
            \end{pmatrix}.
        \end{align*}
\end{prpstn}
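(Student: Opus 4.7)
The plan is to reduce, via Proposition~\ref{prop:decompositionExistence}, the existence of the claimed decomposition to the invertibility of $\widetilde{W}_B^D \begin{bmatrix} P_+^D \\ P_-^D \end{bmatrix}$, and then to obtain that invertibility by transporting the semigroup generation property from the original system through a chain of topological isomorphisms on $\lpSpaceDT{2}{[a,b]}{\mathbb{K}^n}$.

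By the equivalence (a) $\Leftrightarrow$ (b) in Proposition~\ref{prop:decompositionExistence} applied to the diagonalised system --- noting that the spectral projections of $P_1^D$ are precisely the block-diagonal $P_+^D = \left[\begin{smallmatrix}\idMatrix{m}&0\\0&0\end{smallmatrix}\right]$ and $P_-^D = \left[\begin{smallmatrix}0&0\\0&\idMatrix{n-m}\end{smallmatrix}\right]$ --- the decomposition exists if and only if $\widetilde{W}_B^D \begin{bmatrix} P_+^D \\ P_-^D \end{bmatrix}$ is invertible. The further equivalence (b) $\Leftrightarrow$ (c) then reduces this to showing that the operator $A^D_{\idMatrix{}} := P_1^D \partialDer{}{\xi} + P_0^D$, with domain defined by the boundary condition $\widetilde{W}_B^D \begin{bmatrix} x_0(b) \\ x_0(a) \end{bmatrix} = 0$, generates a $C_0$-semigroup on $\lpSpaceDT{2}{[a,b]}{\mathbb{K}^n}$.

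To establish this generation property I would transport it from Assumption~\ref{ass:systemClassAssumptions}(5) through three successive changes of variable. First, the state transformation $\widetilde{x} = S x$ is a topological isomorphism of $\lpSpaceDT{2}{[a,b]}{\mathbb{K}^n}$ because $S \in C^1$ is pointwise invertible on a compact interval, yielding that $A^D = P_1^D \partialDer{}{\xi}\mathcal{H}^D + P_0^D \mathcal{H}^D$ with the $\mathcal{H}^D$-weighted boundary condition generates a $C_0$-semigroup; by Remark~\ref{rem:assumptionRemarks} the bounded term $P_0^D\mathcal{H}^D$ may be dropped. Second, the multiplication $\widehat{x} = \mathcal{H}^D\widetilde{x}$ is a topological isomorphism because $\mathcal{H}^D\in C^1$ is bounded above and below, and it conjugates the operator into $\mathcal{H}^D(\xi) P_1^D\partialDer{}{\xi}$ plus a bounded term, with the unweighted boundary condition $\widetilde{W}_B^D\begin{bmatrix}\widehat{x}(b)\\\widehat{x}(a)\end{bmatrix}=0$. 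Third, exploiting that $\mathcal{H}^D(\xi) P_1^D$ is diagonal with the same sign pattern as $P_1^D$, the per-component spatial rescaling $\eta_i = \int_a^\xi \mathcal{H}^D_{ii}(\zeta)^{-1}\intd{\zeta}$ normalises each transport speed to $\pm 1$ and leaves the boundary condition of structurally the same form. Chaining, the semigroup for $A^D$ yields one for $A^D_{\idMatrix{}}$.

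The main obstacle will be this last per-component spatial rescaling: each component ends up on an interval $[0, L_i]$ of possibly different length, and one must verify that the combined transformation is a topological isomorphism between the appropriate $L^2$ spaces that intertwines both the operator and the boundary condition. A cleaner alternative, if a suitable variable-coefficient version is available in~\cite{a_JacobMorrisZwart15C0Semigroups}, is to characterise $C_0$-semigroup generation for $P_1^D\partialDer{}{\xi}\mathcal{H}^D$ directly by invertibility of $\widetilde{W}_B^D \begin{bmatrix} P_+^D \\ P_-^D \end{bmatrix}$, thereby bypassing the explicit rescaling altogether.
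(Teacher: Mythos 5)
Your reduction to the invertibility of $\widetilde{W}_B^D \left[\begin{smallmatrix} P_+^D \\ P_-^D \end{smallmatrix}\right]$ via Proposition~\ref{prop:decompositionExistence} is exactly how the paper starts, but your main route for establishing that invertibility has a genuine gap at its final step. After the transformations $\widetilde{x}=Sx$ and $\widehat{x}=\mathcal{H}^D\widetilde{x}$ you arrive at the variable-speed diagonal operator $\mathcal{H}^D P_1^D \partialDer{}{\xi}$ (plus bounded terms), and the per-component rescaling $\eta_i=\int_a^\xi (\mathcal{H}^D_{ii})^{-1}(\zeta)\intd{\zeta}$ sends the $i$-th component to an interval $[0,L_i]$ whose length depends on $i$; pulling everything back to a common interval reintroduces constant but distinct speeds $c_i=(b-a)/L_i$. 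So the chain does not terminate at $A_{\idMatrix{}}^D=P_1^D\partialDer{}{\xi}+P_0^D$ but at a system with constant diagonal Hamiltonian $\mathrm{diag}(c_i)\neq \idMatrix{n}$, and to conclude you would still need that the generation property is independent of this constant Hamiltonian --- which is precisely the content of \cite[Thm.~1.5]{a_JacobMorrisZwart15C0Semigroups} that the chain was meant to circumvent. As written the argument is therefore incomplete, and somewhat circular in spirit.

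Your flagged ``cleaner alternative'' is in fact the paper's proof, and the variable-coefficient version you hope for does exist: \cite[Thm.~1.5]{a_JacobMorrisZwart15C0Semigroups} characterises generation for $P_1\partialDer{}{\xi}\mathcal{H}+P_0\mathcal{H}$ directly by the condition that $W_1\,\mathcal{H}(b)\,\mathrm{span}\lbrace e_{i\le m}(b)\rbrace\oplus W_2\,\mathcal{H}(a)\,\mathrm{span}\lbrace e_{i>m}(a)\rbrace=\mathbb{K}^n$, where the $e_i$ are the endpoint eigenvectors of $P_1\mathcal{H}$, i.e.\ the columns of $S^{-1}$. The paper simply writes out
$\widetilde{W}_B^D=\widetilde{W}_B\,\mathrm{diag}\bigl(\mathcal{H}(b)S^{-1}(b)(\mathcal{H}^D)^{-1}(b),\,\mathcal{H}(a)S^{-1}(a)(\mathcal{H}^D)^{-1}(a)\bigr)$
and observes that the range of $\widetilde{W}_B^D\left[\begin{smallmatrix} P_+^D\\ P_-^D\end{smallmatrix}\right]$ is exactly that direct sum, so Assumption~\ref{ass:systemClassAssumptions}(5) for the \emph{original} operator $A$ gives surjectivity, hence invertibility --- no transport of the generation property to $A_{\idMatrix{}}^D$ is needed at all. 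If you replace your isomorphism chain by this direct computation, the proof is complete, and you obtain in addition the equivalence of the decomposition's existence with Assumption~\ref{ass:systemClassAssumptions}(5), as the paper notes in the remark following the proposition.
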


\begin{proof}
    Using that $P_1 \mathcal{H} = S^{-1} P_1^D \mathcal{H}^D S$ we find that 
    \begin{align*}
        \widetilde{W}_B^D &= \widetilde{W}_B \begin{bmatrix}
        P_1^{-1} S^{-1}(b) P_1^D & 0 \\
        0 & P_1^{-1} S^{-1}(a) P_1^D
    \end{bmatrix} \\ 
    &= \widetilde{W}_B \begin{bmatrix}
        \mathcal{H}(b) S^{-1}(b) \left( \mathcal{H}^D \right)^{-1}(b) & 0 \\
        0 & \mathcal{H}(a) S^{-1}(a) \left( \mathcal{H}^D \right)^{-1}(a)
    \end{bmatrix}.
    \end{align*}
    Letting $\widetilde{W}_B = \begin{pmatrix}
        W_1 & W_2
    \end{pmatrix}$ we then have
    \begin{align*}
        \widetilde{W}^D_B \begin{bmatrix}
            P^D_+ \\ P^D_-
        \end{bmatrix} &= \widetilde{W}_B \begin{bmatrix}
        \mathcal{H}(b) S^{-1}(b) \left( \mathcal{H}^D \right)^{-1}(b) P^D_+ \\
        \mathcal{H}(a) S^{-1}(a) \left( \mathcal{H}^D \right)^{-1}(a) P^D_-
    \end{bmatrix} \\
    &= W_1 \mathcal{H}(b) S^{-1}(b) \left( \mathcal{H}^D \right)^{-1}(b) P^D_+ + W_2 \mathcal{H}(a) S^{-1}(a) \left( \mathcal{H}^D \right)^{-1}(a) P^D_-.
    \end{align*}
    Considering now the form of $S^{-1}$, $\mathcal{H}^D$ and $P_{\pm}^D$, we see that 
    \begin{align*}
        S^{-1}(b) \left( \mathcal{H}^D \right)^{-1}(b) P^D_+ &= \begin{pmatrix}
        \frac{e_1(b)}{\lambda_1(b)} & \cdots & \frac{e_m(b)}{\lambda_m(b)} & 0 & \cdots & 0
    \end{pmatrix} \\
        S^{-1}(a) \left( \mathcal{H}^D \right)^{-1}(a) P^D_- &= \begin{pmatrix}
        0 & \cdots & 0 & \frac{e_{m+1}(a)}{\lambda_{m+1}(a)} & \cdots & \frac{e_n(a)}{\lambda_n(a)} & 
    \end{pmatrix},
    \end{align*}
    where $\lambda_i(\xi)$ are the entries of $\mathcal{H}^D_\pm(\xi)$ corresponding to the eigenvectors $e_i(\xi)$ of $P_1 \mathcal{H}(\xi)$.

    We thus realise that the range of $\widetilde{W}^D_B  \begin{bmatrix}
            P^D_+ \\ P^D_-
        \end{bmatrix}$ is given by
    \begin{align*}
        W_1 \, \mathcal{H}(b) \, \textrm{span} \left\lbrace e_{i\leq m}(b) \right\rbrace \oplus W_2 \, \mathcal{H}(a) \, \textrm{span} \left\lbrace e_{i> m}(a) \right\rbrace.
    \end{align*}
    But by Theorem 1.5 in \cite{a_JacobMorrisZwart15C0Semigroups} this space is all of $\mathbb{K}^n$ as the operator $A$ generates a $C_0$-semigroup by Assumption \ref{ass:systemClassAssumptions}(5). Hence $\widetilde{W}^D_B \begin{bmatrix}
            P^D_+ \\ P^D_-
        \end{bmatrix}$ is invertible and existence of the decomposition follows from Proposition~\ref{prop:decompositionExistence}.
\end{proof}
\begin{rmrk}
\begin{enumerate}
\item Using the explicit expressions from Proposition~\ref{prop:decompositionExistence}, we find that the matrix $M$ from Proposition \ref{prop:decompositionExistenceDiagonal} is given by
    \begin{equation*}
        M = \left( \widetilde{W}_B \begin{bmatrix}
            P_1^{-1} S^{-1}(b) P^D_+ \\ P_1^{-1} S^{-1}(a) P^D_-
        \end{bmatrix}  \right)^{-1}
     \widetilde{W}_B \begin{bmatrix}
            P_1^{-1} S^{-1}(b) P^D_- \\ P_1^{-1} S^{-1}(a) P^D_+
        \end{bmatrix}.
    \end{equation*}
    \item Note that the proof of Proposition~\ref{prop:decompositionExistenceDiagonal} shows that the existence of the decomposition of $\widetilde{W}^D_B$ is in fact equivalent to Assumption~\ref{ass:systemClassAssumptions}(5).
\end{enumerate}
\end{rmrk}

\subsection{The case $P^D_0 = 0$}
\label{subsec:caseP0D0}
We now want to restrict ourselves to a particular type of hyperbolic boundary control system and thus take the following assumption for the rest of this section.
\begin{assumption}
\label{ass:vanishingP0D}
    For any $\xi \in [a,b]$ we have $P_0^D(\xi) = 0$
\end{assumption}
\begin{rmrk}
    \begin{enumerate}
        \item Note that Assumption \ref{ass:vanishingP0D} is satisfied in particular if the system has vanishing $P_0 = 0$ and a Hamiltonian $\mathcal{H}$ that is constant along $[a,b]$.
        \item There are cases in which $P_0^D = 0$ even though the Hamiltonian is not constant and $P_0 \neq 0$. In fact, $P_0^D$ vanishes if and only if $S^{-1}$ satisfies the differential equation
        $
            \der{S^{-1}(\xi)}{\xi} = - P_0 P_1^{-1} S^{-1} (\xi).
        $
        
        A concrete example on $[a,b] = [0,1]$ where this is the case despite non-constant $\mathcal{H}$ and $P_0 \neq 0$ is given by \pH{2}{\idMatrix{2}}{\left[ \begin{smallmatrix}
            0 & 1 \\ -1 & 0
        \end{smallmatrix} \right]}{\mathcal{H}}{\widetilde{W}_B}{\widetilde{W}_C} with
    \begin{equation*}
        \mathcal{H}(\xi) = \frac{1}{2} \begin{bmatrix}
            2 + 3 \xi - \xi \cos( 2 \xi) & - \xi \sin( 2 \xi ) \\
            - \xi \sin( 2 \xi) & 2 + 3 \xi + \xi \cos( 2 \xi )
        \end{bmatrix}.
    \end{equation*}
    Then we find $S^{ -1}(\xi) = \left[ \begin{smallmatrix}
            \cos(\xi) & - \sin(\xi) \\
            \sin(\xi) & \cos(\xi)
        \end{smallmatrix} \right]$, $\mathcal{H}^D(\xi) = \left[ \begin{smallmatrix}
            1 + \xi & 0 \\ 0 & 1+ 2 \xi
        \end{smallmatrix}\right]$ and  $P_0^D = 0$.
    \end{enumerate}
\end{rmrk}

\noindent The reason for making this assumption is that in this case the diagonalised system takes the simple form
\begin{align*}
    \partialDer{\widetilde{x}}{t}(\xi, t) &= P_1^D \partialDer{}{\xi} \left( \mathcal{H}^D \widetilde{x}(\xi, t) \right), \\
    u(t) &= \widetilde{W}_B^D \begin{bmatrix}
         \left( \mathcal{H}^D \widetilde{x} \right)(b,t) \\  \left( \mathcal{H}^D \widetilde{x} \right)(a,t) 
    \end{bmatrix}, \\
    y(t) &= \widetilde{W}_C^D \begin{bmatrix}
         \left( \mathcal{H}^D \widetilde{x} \right)(b,t) \\  \left( \mathcal{H}^D \widetilde{x} \right)(a,t) 
    \end{bmatrix},
\end{align*}
which allows us to e.g.\ straightforwardly calculate the fundamental solution\footnote{Note that in the following $\Psi_\zeta^s$, $\mathbf{V}(s)$, $\mathbf{U}(s)$, $M$, etc.\ will always refer to the objects calculated for the diagonalised system \pHSmallBrack{n}{P_1^D}{P_0^D}{\mathcal{H}_D}{\widetilde{W}_B^D}{\widetilde{W}_C^D}.} to be
\begin{equation}
\label{eq:fundamentalSolution}
    \Psi_\zeta^s = e^{s \left( P^D_1 \right)^{-1} \int_a^\zeta \left( \mathcal{H}^D \right)^{-1} (\xi) \intd \xi} = \begin{bmatrix}
        e^{s \int_a^\zeta \left( \mathcal{H}^D_+ \right)^{-1} (\xi) \intd \xi} & 0 \\ 0 & e^{- s \int_a^\zeta \left( \mathcal{H}^D_- \right)^{-1} (\xi) \intd \xi}
    \end{bmatrix}.
\end{equation}
Note that this matrix is in particular again diagonal.

As a second step we can now also give explicit expressions for the matrices $\mathbf{V}(s)$, $\mathbf{V}(s)^{-1}$ and $\mathbf{U}(s)$ from Definition \ref{def:VsUsDef} appearing in the expression for the transfer function of the diagonalised system in Lemma \ref{lem:transferFunctionUexpr},
\begin{gather}
\label{eq:diagonalVs}
    \mathbf{V}(s) = \begin{bmatrix}
    e^{s \int_a^b \left( \mathcal{H}^D_+ \right)^{-1} (\xi) \intd \xi} & 0 \\ 0 & \idMatrix{n-m}
\end{bmatrix}, \,\,\,\,\,
    \mathbf{V}(s)^{-1} = \begin{bmatrix}
    e^{-s \int_a^b \left( \mathcal{H}^D_+ \right)^{-1} (\xi) \intd \xi} & 0 \\ 0 & \idMatrix{n-m}
\end{bmatrix},
\\
\label{eq:diagonalUs}
    \mathbf{U}(s) = \begin{bmatrix}
        e^{-s \int_a^b \left( \mathcal{H}^D_+ \right)^{-1} (\xi) \intd \xi} & 0 \\ 0 & e^{- s \int_a^b \left( \mathcal{H}^D_- \right)^{-1} (\xi) \intd \xi}
    \end{bmatrix}.
\end{gather}
Note that, while these expressions are well-defined for all $s \in \overline{\complNum_0}$, the expression for the transfer function found earlier is a-priori still only valid for the domain given there.

We first collect some simple observations on the matrix $\mathbf{U}(s)$.
\begin{lmm}
\label{lem:UsProperties}
Let $\mathbf{U}(s)$ be given as in Equation \eqref{eq:diagonalUs}.
    \begin{enumerate}
        \item $\mathbf{U}(s)$ is a diagonal matrix with entries of the form $e^{-s \lambda_k}$ where $\lambda_k > 0$.
        \item For $s = it$, with $t \in \realNum$, the matrix $\mathbf{U}(s)$ is unitary.
        \item For any $s \in \complNum_0$ the entries in $\mathbf{U}(s)$ are strictly smaller than 1.
        \item $\matrixNorm{\mathbf{U}(s)}{p}{p} \leq 1$ for any $p \in [1,\infty)$ and $s \in \overline{\complNum_0}$ with equality if and only if $s \in i \realNum$.
        \item $\matrixNorm{\mathbf{U}(s)}{p}{p} = \matrixNorm{\mathbf{U}({\realPart{s}})}{p}{p}$ for any $p \in [1,\infty)$ and $s \in \overline{\complNum_0}$.
        \item The map $t \mapsto \matrixNorm{\mathbf{U}(t)}{p}{p}$ is strictly decreasing, $t \in \realNum$, and $\lim_{t \rightarrow \infty} \matrixNorm{\mathbf{U}(t)}{p}{p} = 0$.
        \item $\matrixNorm{\mathbf{U}(s)}{p}{p}$ is strictly decreasing with increasing real part of $s$.
    \end{enumerate}
\end{lmm}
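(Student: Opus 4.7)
The entire lemma is essentially an exercise in unpacking the explicit formula \eqref{eq:diagonalUs} for $\mathbf{U}(s)$. My approach would therefore be to first establish Part~(1), which is the structural statement, and then read off Parts~(2)--(7) as immediate consequences combined with the standard identity
\[
    \matrixNorm{D}{p}{p} = \max_{1\le k \le n} |d_k|
\]
for a diagonal matrix $D = \operatorname{diag}(d_1,\dots,d_n)$ and any $p \in [1,\infty]$.

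For Part~(1), the key observation is that $\mathcal{H}^D_{\pm}(\xi)$ are diagonal and positive by Assumption~\ref{ass:pHSDiagAssumptions}, hence so are their inverses and the integrals $\int_a^b (\mathcal{H}^D_{\pm})^{-1}(\xi)\intd\xi$. Since the exponential of a diagonal matrix is diagonal with the exponentials of the entries on the diagonal, $\mathbf{U}(s)$ is diagonal with entries
\[
    e^{-s \lambda_k}, \qquad \lambda_k = \int_a^b \mu_k(\xi)\intd\xi > 0,
\]
where $\mu_k(\xi) > 0$ are the diagonal entries of $(\mathcal{H}^D_+)^{-1}$ or $(\mathcal{H}^D_-)^{-1}$; strict positivity of the $\lambda_k$ follows from $m \mathbb{1}_n \le \mathcal{H} \le M\mathbb{1}_n$ in Assumption~\ref{ass:systemClassAssumptions}(3) (which transfers to two-sided bounds on $\mathcal{H}^D$) together with $a<b$.

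Given Part~(1), the remaining items are short. For Part~(2), $|e^{-it\lambda_k}| = 1$ implies $\mathbf{U}(it)^* \mathbf{U}(it) = \idMatrix{n}$. For Part~(3), $|e^{-s\lambda_k}| = e^{-\realPart{s}\lambda_k} < 1$ when $\realPart{s} > 0$ since $\lambda_k > 0$. Parts~(4) and~(5) follow by combining the diagonal-norm identity above with $|e^{-s\lambda_k}| = e^{-\realPart{s}\lambda_k}$, which yields
\[
    \matrixNorm{\mathbf{U}(s)}{p}{p} = \max_k e^{-\realPart{s}\lambda_k} = \matrixNorm{\mathbf{U}(\realPart{s})}{p}{p},
\]
with equality to $1$ exactly when $\realPart{s}=0$. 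For Parts~(6) and~(7), monotonicity reduces to monotonicity of $t \mapsto \max_k e^{-t\lambda_k}$: for $t_1 < t_2$ and any index $k^\ast$ attaining the maximum at $t_2$, strict positivity of $\lambda_{k^\ast}$ gives $e^{-t_1 \lambda_{k^\ast}} > e^{-t_2 \lambda_{k^\ast}}$, so $\matrixNorm{\mathbf{U}(t_1)}{p}{p} \ge e^{-t_1 \lambda_{k^\ast}} > \matrixNorm{\mathbf{U}(t_2)}{p}{p}$; the limit $t \to \infty$ is then $\max_k e^{-t\lambda_k} = e^{-t \lambda_{\min}} \to 0$, where $\lambda_{\min} = \min_k \lambda_k > 0$.

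\textbf{Main obstacle.} There is essentially no serious difficulty here: once the diagonal structure in Part~(1) is identified, everything else is routine. The only point requiring a small amount of care is arguing that the $\lambda_k$ are indeed strictly positive (rather than merely nonnegative), which uses both the uniform lower bound on $\mathcal{H}$ and the assumption $a < b$; I would be explicit about this when setting up Part~(1) so that Parts~(3)--(7) can invoke strict positivity without comment.
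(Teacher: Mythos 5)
Your proof is correct and is exactly the routine argument the authors evidently had in mind: the paper states Lemma~\ref{lem:UsProperties} without proof, treating it as an immediate consequence of the explicit diagonal form \eqref{eq:diagonalUs} together with the identity $\matrixNorm{D}{p}{p}=\max_k|d_k|$ for diagonal $D$. Your extra care in justifying $\lambda_k>0$ (positivity of $\mathcal{H}^D_\pm$ plus $a<b$) is appropriate and fills in the only detail worth making explicit.
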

\noindent Let now $\left( M_{ij} \right)_{i,j} = M$ and consider the matrix
\begin{align*}
    M \mathbf{U}(s) = 
    \begin{bmatrix}
        M_{11} e^{-s \lambda_1} & M_{12} e^{-s \lambda_2} & \cdots &  M_{1n} e^{-s \lambda_n} \\
        M_{21} e^{-s \lambda_1} & M_{22} e^{-s \lambda_2} & \cdots &  M_{2n} e^{-s \lambda_n} \\
        \vdots & \vdots & & \vdots \\
        M_{n1} e^{-s \lambda_1} & M_{n2} e^{-s \lambda_2} & \cdots &  M_{nn} e^{-s \lambda_n}
    \end{bmatrix},
\end{align*}
whose form follows from Lemma~\ref{lem:UsProperties}(1). Then the properties of $\mathbf{U}(s)$ from that Lemma imply convergence of the Neumann series $\sum_{k=0}^\infty (M \mathbf{U}(s))^k$ for certain $s$.

\begin{lmm}
\label{lem:convergenceNeumannSeries}
Let $\mathbf{U}(s)$ be given as in Equation~\eqref{eq:diagonalUs} and $M \in \mathbb{K}^{n \times n}$.
    \begin{enumerate}
    \item There exists an $\alpha_0 \geq 0$ such that the series $\sum_{k=0}^\infty (M \mathbf{U}(s))^k$ converges for every $s \in \complNum_{\alpha_0}$. 
    \item If $\matrixNorm{M}{2}{2} \leq 1$ then $\sum_{k=0}^\infty (M \mathbf{U}(s))^k$ converges for every $s \in \complNum_0$.
    \item If $\matrixNorm{M}{2}{2} < 1$ then $\sum_{k=0}^\infty (M \mathbf{U}(s))^k$ converges for every $s \in \overline{\complNum_0}$.
\end{enumerate}
\end{lmm}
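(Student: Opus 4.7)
The plan is to reduce each statement to a verification that $\matrixNorm{M \mathbf{U}(s)}{2}{2} < 1$ in the claimed region. Once this is established, absolute convergence of the Neumann series $\sum_{k=0}^\infty (M\mathbf{U}(s))^k$ in $\mathbb{K}^{n\times n}$ follows from the standard geometric majorant together with submultiplicativity $\matrixNorm{(M\mathbf{U}(s))^k}{2}{2} \leq \matrixNorm{M\mathbf{U}(s)}{2}{2}^k$. The fundamental estimate underlying every case is
\begin{equation*}
\matrixNorm{M\mathbf{U}(s)}{2}{2} \leq \matrixNorm{M}{2}{2}\,\matrixNorm{\mathbf{U}(s)}{2}{2},
\end{equation*}
obtained from submultiplicativity of the operator norm.

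For parts (3) and (2), the norm of $\mathbf{U}(s)$ is directly controlled by Lemma~\ref{lem:UsProperties}(4), which gives $\matrixNorm{\mathbf{U}(s)}{2}{2} \leq 1$ on $\overline{\complNum_0}$, with equality if and only if $s$ lies on the imaginary axis. Thus, under the hypothesis $\matrixNorm{M}{2}{2} < 1$ of part (3), the above product is $< 1$ throughout $\overline{\complNum_0}$. For part (2), the weaker bound $\matrixNorm{M}{2}{2} \leq 1$ yields only $\matrixNorm{M\mathbf{U}(s)}{2}{2} \leq \matrixNorm{\mathbf{U}(s)}{2}{2}$, which is strictly less than $1$ exactly when $s \notin i\realNum$ --- i.e., on all of $\complNum_0$.

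For part (1) no assumption on $M$ is available, so we instead exploit decay in the real part. The case $M = 0$ is trivial, so assume $M\neq 0$. By Lemma~\ref{lem:UsProperties}(6), $\matrixNorm{\mathbf{U}(t)}{2}{2}\to 0$ as $t \to \infty$, so we may pick $\alpha_0 \geq 0$ large enough that $\matrixNorm{M}{2}{2}\,\matrixNorm{\mathbf{U}(\alpha_0)}{2}{2} < 1$. Lemma~\ref{lem:UsProperties}(5) reduces the norm at a general $s$ to the norm at $\realPart{s}$, and Lemma~\ref{lem:UsProperties}(7) provides monotonicity in the real part, so that for every $s \in \complNum_{\alpha_0}$,
\begin{equation*}
\matrixNorm{\mathbf{U}(s)}{2}{2} = \matrixNorm{\mathbf{U}(\realPart{s})}{2}{2} \leq \matrixNorm{\mathbf{U}(\alpha_0)}{2}{2},
\end{equation*}
which closes the argument.

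There is no real obstacle beyond correctly invoking the properties of $\mathbf{U}(s)$ already catalogued in Lemma~\ref{lem:UsProperties}; the only mild subtlety --- and the reason parts (2) and (3) must be stated separately --- is that $\matrixNorm{\mathbf{U}(s)}{2}{2}$ equals $1$ on the imaginary axis, so a strict bound on $\matrixNorm{M}{2}{2}$ is required if one wants the Neumann series to converge up to the boundary $\partial \complNum_0$.
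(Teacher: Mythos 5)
Your proposal is correct and follows essentially the same route as the paper: all three parts are reduced to the bound $\matrixNorm{M\mathbf{U}(s)}{2}{2} \leq \matrixNorm{M}{2}{2}\matrixNorm{\mathbf{U}(s)}{2}{2} < 1$ using exactly the properties of $\mathbf{U}(s)$ catalogued in Lemma~\ref{lem:UsProperties} (parts 4--7), followed by the standard Neumann series argument. Your write-up is somewhat more explicit about which property is used where, but there is no substantive difference.
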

\begin{proof} \hphantom{a}
\begin{enumerate}
\item By Parts 5 to 7 of Lemma \ref{lem:UsProperties}, there exists an $\alpha_0 \geq 0$ such that $\matrixNorm{M \mathbf{U}(s)}{2}{2} \leq \matrixNorm{M}{2}{2} \matrixNorm{\mathbf{U}(s)}{2}{2} < 1$ for $s \in \complNum_{\alpha_0}$. Hence the Neumann series $\sum_{k=0}^\infty (M \mathbf{U}(s))^k$ converges for these $s$.

    \item From $\matrixNorm{M}{2}{2} \leq 1$ and $\matrixNorm{\mathbf{U}(s)}{2}{2} < 1$ for $s \in \complNum_0$ we  conclude that $\matrixNorm{M \mathbf{U}(s)}{2}{2} \leq \matrixNorm{\mathbf{U}(s)}{2}{2} < 1$. Hence the Neumann series converges.

    \item Follows directly from $\matrixNorm{M \mathbf{U}(s)}{2}{2} \leq \matrixNorm{M}{2}{2} < 1$ for $\overline{\complNum_0}$. \qedhere
\end{enumerate}   
\end{proof}

\noindent Using Lemma~\ref{lem:convergenceNeumannSeries} and Lemma \ref{lem:transferFunctionUexpr} we then immediately have the following results.
\begin{crllr}
Consider a hBCS satisfying Assumptions \ref{ass:systemClassAssumptions}, \ref{ass:pHSDiagAssumptions} and \ref{ass:vanishingP0D}.
    \begin{enumerate}
        \item There exists $\alpha_0 \geq 0$ such that for $s \in \complNum_{\alpha_0}$
        \begin{equation}
        \label{eq:transferFunctionConvergenceLemma}
            \mathbf{G}(s) = \mathbf{Z}(s) \sum_{k=0}^\infty (M \mathbf{U}(s))^k \, K^{-1}.
        \end{equation}
        \item If $\matrixNorm{M}{2}{2} \leq 1$ then \eqref{eq:transferFunctionConvergenceLemma} holds for $s \in \complNum_0$.
        \item If $\matrixNorm{M}{2}{2} < 1$ then \eqref{eq:transferFunctionConvergenceLemma} holds for $s \in \overline{\complNum_0}$.
    \end{enumerate}
\end{crllr}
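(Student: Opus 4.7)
The plan is to obtain the corollary as an essentially immediate consequence of Lemma~\ref{lem:transferFunctionUexpr} together with Lemma~\ref{lem:convergenceNeumannSeries}, once one converts the matrix inverse $(\idMatrix{n} - M \mathbf{U}(s))^{-1}$ into the Neumann sum and handles the (mild) issue of extending the identity beyond the half-plane $\complNum_{\widetilde{\alpha}}$ on which Lemma~\ref{lem:transferFunctionUexpr} is a priori valid.

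Concretely, I would start from the representation
\begin{equation*}
    \mathbf{G}(s) = \mathbf{Z}(s) \left( \idMatrix{n} - M \mathbf{U}(s) \right)^{-1} K^{-1}
\end{equation*}
given by Lemma~\ref{lem:transferFunctionUexpr} on $\complNum_{\widetilde{\alpha}}$, and recall the elementary fact that, whenever the Neumann series $\sum_{k=0}^\infty (M\mathbf{U}(s))^k$ converges, the matrix $\idMatrix{n} - M\mathbf{U}(s)$ is invertible and the series coincides with its inverse. For Part~(1) this is the whole argument: Lemma~\ref{lem:convergenceNeumannSeries}(1) yields some $\alpha_0' \geq 0$ on which the series converges, so choosing $\alpha_0 := \max(\alpha_0', \widetilde{\alpha})$ immediately gives \eqref{eq:transferFunctionConvergenceLemma} on $\complNum_{\alpha_0}$.

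For Parts~(2) and~(3) the norm assumptions on $M$ together with Lemma~\ref{lem:convergenceNeumannSeries}(2)--(3) provide convergence of the Neumann series on $\complNum_0$ and $\overline{\complNum_0}$ respectively, so the right-hand side of \eqref{eq:transferFunctionConvergenceLemma} is well defined throughout these regions. To transfer the identity from $\complNum_{\widetilde{\alpha}}$ to the larger region, I would invoke the factorisation
\begin{equation*}
    \widetilde{W}_B \begin{bmatrix} \Psi_b^s \\ \idMatrix{n} \end{bmatrix} = K\left( \idMatrix{n} - M \mathbf{U}(s) \right) \mathbf{V}(s)
\end{equation*}
established in the derivation of Lemma~\ref{lem:transferFunctionUexpr}. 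Since $K$ is invertible by Assumption~\ref{ass:decompositionAssumption} and $\mathbf{V}(s)$ is invertible on all of $\complNum$ by the explicit expression~\eqref{eq:diagonalVs}, convergence of the Neumann series forces $\widetilde{W}_B \begin{bmatrix} \Psi_b^s \\ \idMatrix{n} \end{bmatrix}$ to be invertible on the full region. Hence Proposition~\ref{prop:transferFunctionExpression} defines $\mathbf{G}(s)$ analytically there, and the computation of Lemma~\ref{lem:transferFunctionUexpr} carries through verbatim, yielding \eqref{eq:transferFunctionConvergenceLemma}. Equivalently, both sides are analytic on the relevant region and coincide on $\complNum_{\widetilde{\alpha}}$, so the identity theorem finishes the job.

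The only real subtlety is this domain-extension step for Parts~(2) and~(3): one must justify interpreting $\mathbf{G}(s)$ for $s \in \overline{\complNum_0}$ even though $\rho(A)$ need not \emph{a priori} contain this set. However, this is handled cleanly by the factorisation above, which shows that convergence of the Neumann series is precisely the condition ensuring invertibility of the boundary matrix and hence well-definedness of the transfer function via Proposition~\ref{prop:transferFunctionExpression}. No new analytical ingredients beyond those already present in the preceding lemmas are required.
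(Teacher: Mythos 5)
Your argument is correct and is exactly the route the paper intends: the paper states the corollary as an immediate consequence of Lemma~\ref{lem:transferFunctionUexpr} and Lemma~\ref{lem:convergenceNeumannSeries}, which is precisely your combination of the Neumann-series identity with the convergence results. Your additional care in parts (2)--(3) — using the factorisation $\widetilde{W}_B\left[\begin{smallmatrix}\Psi_b^s\\ \idMatrix{n}\end{smallmatrix}\right] = K(\idMatrix{n}-M\mathbf{U}(s))\mathbf{V}(s)$ to justify that $\mathbf{G}$ extends to $\complNum_0$ respectively $\overline{\complNum_0}$ — fills in a step the paper leaves implicit, and is a welcome clarification rather than a deviation.
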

By Lemma \ref{lem:NeumannSeriesRewriting}(2) we thus have
\begin{equation}
\label{eq:invLaplaceTransTransfFunctionTermByTerm}
    \laplaceInvTr{\mathbf{G}}(t) = \laplaceInvTr{\mathbf{Z}}(t) \ast \sum_{k=0}^\infty \laplaceInvTr{M \mathbf{U}}(t)^{\ast k} K^{-1}.
\end{equation}
where
\begin{align*}
    \laplaceInvTr{M \mathbf{U}}(t) = \begin{bmatrix}
        M_{11} \delta(t - \lambda_1) & M_{12} \delta(t - \lambda_2) & \cdots &  M_{1n} \delta(t - \lambda_n) \\
        M_{21} \delta(t - \lambda_1) & M_{22} \delta(t - \lambda_2) & \cdots &  M_{2n} \delta(t - \lambda_n) \\
        \vdots & \vdots & & \vdots \\
        M_{n1} \delta(t - \lambda_1) & M_{n2} \delta(t - \lambda_2) & \cdots &  M_{nn} \delta(t - \lambda_n)
    \end{bmatrix}.
\end{align*}

\begin{rmrk}
    Note in passing that, as $\mathbf{Z} \in H^\infty(\overline{\complNum_\alpha})$ and $(M \mathbf{U})^k \in H^\infty(\overline{\complNum_\alpha})$ for $\alpha > \alpha_0$, the uniform convergence implies that also $\mathbf{G} \in H^\infty(\overline{\complNum_\alpha})$. 
\end{rmrk}

\subsection{BIBO stability of the diagonalised system}
We now strive to derive sufficient conditions for BIBO stability using the condition from Part 3 of Lemma \ref{lem:NeumannSeriesRewriting}. By a slight abuse of notation aimed at conciseness, let in the following $\left\| F_{p,q}(s) \right\|_{\mathcal{M}} := \left\| \laplaceInvTr{F_{p,q}} \right\|_{\mathcal{M}}$ denote the total variation of the inverse Laplace transform of the $(p,q)$-th entry of $F$.

With this notation, by Part 3 of Lemma \ref{lem:NeumannSeriesRewriting} it is thus sufficient for BIBO stability to show that for all $1 \leq p,q \leq n$ we have
\begin{align*}
    \left\| \left(\mathbf{Z}\right)_{p,q} \right\|_{\mathcal{M}} < \infty \qquad \andMath \qquad
    \left\| \left( \left( M \mathbf{U}(s) \right)^k\right)_{p,q} \right\|_{\mathcal{M}} < \infty
\end{align*}
for any $k>0$ and that
\begin{equation}
\label{eq:normSumConvergence}
\sum_{k=0}^\infty \left\| \left( \left( M \mathbf{U}(s) \right)^k\right)_{p,q} \right\|_{\mathcal{M}} < \infty.
\end{equation}

\begin{lmm}
    For any $\widetilde{W}_C \in \mathbb{K}^{n \times 2n}$ and $1 \leq p,q \leq n$ we have $\left\| \left(\mathbf{Z}\right)_{p,q} \right\|_{\mathcal{M}} < \infty$.
\end{lmm}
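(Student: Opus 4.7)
The plan is to exploit the fact that under the standing Assumptions \ref{ass:pHSDiagAssumptions} and \ref{ass:vanishingP0D} of this subsection, both $\Psi_b^s$ and $\mathbf{V}(s)^{-1}$ are diagonal matrices whose nonzero entries are either constants equal to $1$ or pure exponentials of the form $e^{-s\lambda}$ with $\lambda > 0$. Consequently, each entry of $\mathbf{Z}(s)$ will turn out to be an affine combination of $1$ and a single such exponential, whose inverse Laplace transform is a discrete measure supported on at most two points — and hence trivially of bounded total variation.

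Concretely, I would first compute the diagonal product $\Psi_b^s\, \mathbf{V}(s)^{-1}$ directly from \eqref{eq:fundamentalSolution} and \eqref{eq:diagonalVs}. The shift encoded in $\mathbf{V}(s)^{-1}$ is tailored precisely to cancel the exponential growth in the upper $(+)$-block of $\Psi_b^s$, so the product is the diagonal matrix whose first $m$ diagonal entries equal $1$ and whose remaining $n-m$ diagonal entries are pure exponentials $e^{-s\lambda_q}$, with $\lambda_q > 0$ the numbers from Lemma \ref{lem:UsProperties}(1). Stacking this block on top of $\mathbf{V}(s)^{-1}$ itself, every column of the resulting $2n \times n$ matrix $\left[ \begin{smallmatrix} \Psi_b^s \\ \idMatrix{n} \end{smallmatrix} \right] \mathbf{V}(s)^{-1}$ contains exactly two nonzero entries: one equal to $1$ and one of the form $e^{-s\lambda_q}$.

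Left-multiplying by the constant matrix $\widetilde{W}_C$ then yields, for every index pair $(p,q)$,
\[
(\mathbf{Z}(s))_{p,q} \;=\; c_{p,q} + d_{p,q}\, e^{-s\lambda_q},
\]
with coefficients $c_{p,q}, d_{p,q} \in \mathbb{K}$ read off directly from the entries of $\widetilde{W}_C$. Its inverse Laplace transform is therefore the discrete measure $c_{p,q}\,\delta_0 + d_{p,q}\,\delta_{\lambda_q}$, which has total variation $|c_{p,q}| + |d_{p,q}| < \infty$, giving the claimed estimate.

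There is no genuine obstacle in this argument: the whole result rests on the structural cancellation between $\Psi_b^s$ and $\mathbf{V}(s)^{-1}$ — which is precisely what keeps $\mathbf{Z}(s)$ bounded as $\realPart{s} \to \infty$ — so once that diagonal product is written out, the conclusion is immediate. The only care required is correct bookkeeping of which eigenvalue of $P_1\mathcal{H}$ contributes to which column.
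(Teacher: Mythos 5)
Your proposal is correct and follows essentially the same route as the paper: both arguments rest on the explicit diagonal forms of $\Psi_b^s$ and $\mathbf{V}(s)^{-1}$ under Assumptions~\ref{ass:pHSDiagAssumptions} and \ref{ass:vanishingP0D}, which make every entry of $\mathbf{Z}(s)$ a finite linear combination of constants and decaying exponentials $e^{-s\lambda}$ with $\lambda>0$, whose inverse Laplace transform is a finite sum of point masses and hence of bounded total variation. Your version is merely more explicit in tracking that each entry involves exactly one such exponential per column, which the paper leaves implicit.
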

\begin{proof}
    It follows directly from the definition of $\mathbf{Z} = \widetilde{W}_C \begin{bmatrix}
        \Psi_b^s \\ \idMatrix{n}
    \end{bmatrix} \mathbf{V}(s)^{-1}$ and the derived expressions for $\Psi_b^s$ and $\mathbf{V}(s)^{-1}$ that the components of $\mathbf{Z}$ are just finite linear combinations of exponential terms with negative exponent and constants. Thus their inverse Laplace transform will be a finite linear combination of delta distributions and hence a measure of bounded total variation.
\end{proof}

\noindent Considering the structure of $M \mathbf{U}(s)$ we see that any power of it will have the form
\begin{align*}
\left( M \mathbf{U}(s) \right)^k =
    \begin{bmatrix}
        \sum_{i=1}^m \alpha^{(k)}_{11,i} e^{-s \widetilde{\lambda}_i} & \sum_{i=1}^m \alpha^{(k)}_{12,i} e^{-s \widetilde{\lambda}_i} &  \cdots & \sum_{i=1}^m \alpha^{(k)}_{1n,i} e^{-s \widetilde{\lambda}_i} \\
        \sum_{i=1}^m \alpha^{(k)}_{21,i} e^{-s \widetilde{\lambda}_i} & \sum_{i=1}^m \alpha^{(k)}_{22,i} e^{-s \widetilde{\lambda}_i} & \cdots & \sum_{i=1}^m \alpha^{(k)}_{2n,i} e^{-s \widetilde{\lambda}_i} \\
        \vdots & \vdots &  & \vdots \\
        \sum_{i=1}^m \alpha^{(k)}_{n1,i} e^{-s \widetilde{\lambda}_i} & \sum_{i=1}^m \alpha^{(k)}_{n2,i} e^{-s \widetilde{\lambda}_i} & \cdots & \sum_{i=1}^m \alpha^{(k)}_{nn,i} e^{-s \widetilde{\lambda}_i}
    \end{bmatrix}
\end{align*}
with some coefficients $\alpha^{(k)}_{pq,i}$ and pairwise different $\widetilde{\lambda}_i \in \realNum$ with $1 \leq i \leq m$ for some $m \leq n$ and $1 \leq p,q \leq n$. Its inverse Laplace transform is then given by
\begin{align*}
\left( M \mathbf{U}(s) \right)^k =
    \begin{bmatrix}
        \sum_i \alpha^{(k)}_{11,i} \delta\left( t - \widetilde{\lambda}_i \right) & \sum_i \alpha^{(k)}_{12,i} \delta\left( t - \widetilde{\lambda}_i \right) &  \cdots & \sum_i \alpha^{(k)}_{1n,i} \delta\left( t - \widetilde{\lambda}_i \right) \\
        \sum_i \alpha^{(k)}_{21,i} \delta\left( t - \widetilde{\lambda}_i \right) & \sum_i \alpha^{(k)}_{22,i} \delta\left( t - \widetilde{\lambda}_i \right) & \cdots & \sum_i \alpha^{(k)}_{2n,i} \delta\left( t - \widetilde{\lambda}_i \right) \\
        \vdots & \vdots &  & \vdots \\
        \sum_i \alpha^{(k)}_{n1,i} \delta\left( t - \widetilde{\lambda}_i \right) & \sum_i \alpha^{(k)}_{n2,i} \delta\left( t - \widetilde{\lambda}_i \right) & \cdots & \sum_i \alpha^{(k)}_{nn,i} \delta\left( t - \widetilde{\lambda}_i \right)
    \end{bmatrix}
\end{align*}
and hence
$
    \left\| \left( \left( M \mathbf{U}(s) \right)^k \right)_{p,q} \right\|_{\mathcal{M}} = \sum_{i=1}^m \left| \alpha^{(k)}_{pq,i} \right| < \infty$ with $1 \leq p,q \leq n$
always has bounded total variation. For sufficient conditions for BIBO stability it thus remains to guarantee that the sums in Equation~\eqref{eq:normSumConvergence} are all finite. We do this using a number of different additional assumptions on $M$.

\begin{thrm}
\label{thm:sufficientConditions}
Let \pH{n}{P^D_1}{0}{\mathcal{H}^D}{\widetilde{W}^D_B}{\widetilde{W}^D_C} be the diagonalised form of a system satisfying Assumptions~\ref{ass:systemClassAssumptions}, \ref{ass:pHSDiagAssumptions} and \ref{ass:vanishingP0D}. Then if any of the following conditions is satisfied, Equation~\eqref{eq:normSumConvergence} holds and hence the system is BIBO stable.
    \begin{enumerate}
        \item $\matrixNorm{M}{\infty}{\infty} < 1$
        \item $\sum_{k=0}^\infty \left( \absMatrix{M} \right)^k$ converges. This is in particular the case if $\| \absMatrix{M} \| < 1$ in some submultiplicative matrix norm.
        \item There exists a $k_0 > 0$ such that $\displaystyle \sum_{r=1}^n \left\| \left( \left( M \mathbf{U}(s) \right)^{k_0} \right)_{p,r} \right\|_{\mathcal{M}} < 1$ for  $1 \leq p \leq n$.
    \end{enumerate}
\end{thrm}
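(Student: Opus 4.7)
All three conditions target the convergence of the series in Equation~\eqref{eq:normSumConvergence}. The plan is to introduce the non-negative scalar matrix $N^{(k)}\in[0,\infty)^{n\times n}$ with entries
\[
N^{(k)}_{pq}:=\bigl\|\bigl((M\mathbf{U}(s))^k\bigr)_{p,q}\bigr\|_{\mathcal{M}},
\]
and reduce everything to elementary convergence statements about $\sum_k N^{(k)}$. The key structural input is the entrywise submultiplicativity
\[
N^{(k+\ell)}\le N^{(k)}\,N^{(\ell)},
\]
which follows because the $(p,q)$ entry of $(M\mathbf{U}(s))^{k+\ell}$ equals $\sum_r ((M\mathbf{U})^k)_{pr}\,((M\mathbf{U})^\ell)_{rq}$, and inverse Laplace transform of this product is a sum of convolutions of matrix-valued measures, for which the total variation satisfies $\|\mu\ast\nu\|_{\mathcal{M}}\le\|\mu\|_{\mathcal{M}}\|\nu\|_{\mathcal{M}}$. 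Since the inverse Laplace transform of $(M\mathbf{U}(s))_{p,q}$ is $M_{pq}\delta(t-\lambda_q)$ by the explicit description above the theorem, we also read off $N^{(1)}=\absMatrix{M}$ and $N^{(0)}=\idMatrix{n}$.

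\textbf{Conditions (1) and (2).} Iterating the submultiplicativity yields $N^{(k)}\le(\absMatrix{M})^k$ entrywise, so Condition~(2) makes $\sum_k N^{(k)}_{p,q}$ finite at once. The supplementary remark that $\|\absMatrix{M}\|<1$ in some submultiplicative norm suffices is the standard Neumann-series argument. Condition~(1) reduces to Condition~(2) via the identity
\[
\matrixNorm{M}{\infty}{\infty}=\max_p\sum_q|M_{pq}|=\matrixNorm{\absMatrix{M}}{\infty}{\infty},
\]
together with the submultiplicativity of the $\ell^\infty\!\to\!\ell^\infty$ operator norm, which makes $\sum_k(\absMatrix{M})^k$ converge in that norm and hence entrywise.

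\textbf{Condition (3).} The hypothesis is precisely $\matrixNorm{N^{(k_0)}}{\infty}{\infty}<1$. The plan is to write every $k\ge0$ uniquely as $k=jk_0+r$ with $0\le r<k_0$ and to apply the submultiplicativity twice to obtain
\[
N^{(k)}\le (N^{(k_0)})^j\,N^{(r)}.
\]
Summing first over $r\in\{0,\dots,k_0-1\}$ produces the fixed non-negative matrix $S:=\sum_{r=0}^{k_0-1}N^{(r)}$, so that $\sum_{k=0}^\infty N^{(k)}\le\bigl(\sum_{j=0}^\infty (N^{(k_0)})^j\bigr)\,S$ entrywise. The inner sum converges in $\ell^\infty\!\to\!\ell^\infty$ norm by the Neumann series applied to $N^{(k_0)}$, which finishes the argument.

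\textbf{Main obstacle.} The substantive step is the realisation that the proper object to analyse is the scalar non-negative matrix $N^{(k)}$ of total variations rather than $(M\mathbf{U}(s))^k$ itself, and that Equation~\eqref{eq:normSumConvergence} is really a statement about the coefficients $\alpha^{(k)}_{pq,i}$ displayed before the theorem. Once the submultiplicativity $N^{(k+\ell)}\le N^{(k)}N^{(\ell)}$ is established the three cases become routine; the only mildly delicate bookkeeping occurs in Case~(3), where the splitting $k=jk_0+r$ is needed because the relevant contracting matrix is $N^{(k_0)}$ rather than $\absMatrix{M}$, and may in principle be strictly smaller.
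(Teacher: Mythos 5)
Your proposal is correct and follows essentially the same route as the paper: the entrywise submultiplicativity $N^{(k+\ell)}\le N^{(k)}N^{(\ell)}$ you isolate is exactly what the paper's three explicit inductions establish in the special cases $\ell=1$ (parts 1 and 2) and $\ell=k_0$ (part 3, where the paper likewise factors the series as $\bigl(\sum_{k<k_0}(M\mathbf{U})^k\bigr)\bigl(\sum_{j}(M\mathbf{U})^{jk_0}\bigr)$). Your packaging via the non-negative matrix $N^{(k)}$ and the convolution inequality $\|\mu\ast\nu\|_{\mathcal{M}}\le\|\mu\|_{\mathcal{M}}\|\nu\|_{\mathcal{M}}$ is a slightly cleaner organisation of the same argument, not a different one.
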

\begin{proof}
\begin{enumerate}[wide, labelindent=0pt]
\item Denote $m_\infty := \matrixNorm{M}{\infty}{\infty} < 1$. From the form of $M \mathbf{U}(s)$ it then follows in particular that $\sum_r \left\| \left( M \mathbf{U}(s) \right)_{p,r} \right\|_{\mathcal{M}} \leq m_\infty$ for any $p$.

We now claim that  $\sum_r \left\| \left( \left( M \mathbf{U}(s) \right)^k \right)_{p,r} \right\|_{\mathcal{M}} \leq m_\infty^k$ and prove this by induction. By what we just stated this is true for $k = 1$. Let it be true for some $k$ and let $\left( M \mathbf{U}(s) \right)^{k}$ have the form as above. Then
\vspace*{-5pt}
    \begin{align*}
    \left( \left( M \mathbf{U}(s) \right)^{k+1} \right)_{p,q} &= 
    \begin{bmatrix}
        \sum_i \alpha^{(k)}_{p1,i} e^{-s \widetilde{\lambda}_i} & \sum_i \alpha^{(k)}_{p2,i} e^{-s \widetilde{\lambda}_i} & \hspace*{-4pt} \cdots \hspace*{-4pt} & \sum_i \alpha^{(k)}_{pn,i} e^{-s \widetilde{\lambda}_i}
    \end{bmatrix} 
    \begin{bmatrix}
        M_{1q} e^{-s \lambda_q} \\
        M_{2q} e^{-s \lambda_q} \\
        \vdots  \\
        M_{nq} e^{-s \lambda_q} 
    \end{bmatrix} \\
    &= \sum_r \sum_i \alpha^{(k)}_{pr,i} e^{-s \widetilde{\lambda}_i} M_{rq} e^{-s \lambda_q}
    = \sum_r \sum_i \alpha^{(k)}_{pr,i} M_{rq}  e^{-s (\widetilde{\lambda}_i + \lambda_q)}.
\end{align*}
Thus
\begin{align*}
    \left\| \left( \left( M \mathbf{U}(s) \right)^{k+1} \right)_{p,q} \right\|_{\mathcal{M}} \leq \sum_i \left| \sum_r   \alpha^{(k)}_{pr,i} M_{rq} \right| &\leq \sum_i  \sum_r  \left| \alpha^{(k)}_{pr,i} \right| \left| M_{rq} \right| \\ &= \sum_r \left| M_{rq} \right| \left\| \left( \left( M \mathbf{U}(s) \right)^k \right)_{p,r} \right\|_{\mathcal{M}}.
\end{align*}
Hence
\begin{align*}
    &\sum_q \left\| \left( \left( M \mathbf{U}(s) \right)^{k+1} \right)_{p,q} \right\|_{\mathcal{M}} \leq \sum_q \sum_r \left| M_{rq} \right| \left\| \left( \left( M \mathbf{U}(s) \right)^k \right)_{p,r} \right\|_{\mathcal{M}}\\ &\qquad= \sum_r \left\| \left( \left( M \mathbf{U}(s) \right)^k \right)_{p,r} \right\|_{\mathcal{M}} \sum_q \left| M_{rq} \right| \leq m_\infty \sum_r \left\| \left( \left( M \mathbf{U}(s) \right)^k \right)_{p,r} \right\|_{\mathcal{M}} \leq m_\infty^{k+1}.
\end{align*}
This proves the claim and by a Neumann series argument the proposition.

\item For this it is sufficient to show by induction that for any $k \in \natNum$
    \begin{align*}
    \left\| \left( \left( M \mathbf{U}(s) \right)^k \right)_{p,q} \right\|_{\mathcal{M}} \leq \left(\left( \absMatrix{M} \right)^k \right)_{p,q}.
    \end{align*}
    The claim is clear from the expression for $M \mathbf{U}(s)$ for $k=1$. Now suppose the inequality holds for $k \in \natNum$. Then
    \vspace*{-5pt}
    \begin{align*}
    \left( \left( M \mathbf{U}(s) \right)^{k+1} \right)_{p,q} &= 
    \begin{bmatrix}
        \sum_i \alpha^{(k)}_{p1,i} e^{-s \widetilde{\lambda}_i} & \sum_i \alpha^{(k)}_{p2,i} e^{-s \widetilde{\lambda}_i} & \hspace*{-4pt} \cdots \hspace*{-4pt} & \sum_i \alpha^{(k)}_{pn,i} e^{-s \widetilde{\lambda}_i}
    \end{bmatrix} 
    \begin{bmatrix}
        M_{1q} e^{-s \lambda_q} \\
        M_{2q} e^{-s \lambda_q} \\
        \vdots  \\
        M_{nq} e^{-s \lambda_q} 
    \end{bmatrix} \\
    &= \sum_r \sum_i \alpha^{(k)}_{pr,i} e^{-s \widetilde{\lambda}_i} M_{rq} e^{-s \lambda_q}
    = \sum_r \sum_i \alpha^{(k)}_{pr,i} M_{rq}  e^{-s (\widetilde{\lambda}_i + \lambda_q)}.
\end{align*}
Thus
\begin{align*}
    &\left\| \left( \left( M \mathbf{U}(s) \right)^{k+1} \right)_{p,q} \right\|_{\mathcal{M}} \leq \sum_r \sum_i \left| \alpha^{(k)}_{pr,i} M_{rq} \right| = \sum_r \left| M_{rq} \right| \sum_i \left| \alpha^{(k)}_{pr,i} \right| \\ &\qquad= \sum_r \left| M_{rq} \right| \left\| \left( \left( M \mathbf{U}(s) \right)^k \right)_{p,q} \right\|_{\mathcal{M}} \leq \sum_r \left| M_{rq} \right| \left( \left(\absMatrix{M}\right)^{k} \right)_{p,r} = \left( \left(\absMatrix{M}\right)^{k+1} \right)_{p,r}.
\end{align*}

\item Realise that it is sufficient to show that the sum
    \begin{equation*}
        \sum_{k=0}^\infty (M \mathbf{U}(s))^k = \left( \sum_{k=0}^{k_0 - 1} (M \mathbf{U}(s))^k \right) \left( \sum_{\ell=0}^\infty (M \mathbf{U}(s))^{\ell k_0} \right).
    \end{equation*}
    is the Laplace transform of a measure of bounded total variation. The first term is always the Laplace transform of a measure of bounded total variation as a finite sum of such terms. For the second term, let $m_{k_0} := \max_p \sum_r \left\| \left( \left( M \mathbf{U}(s) \right)^{k_0} \right)_{p,r} \right\|_{\mathcal{M}} < 1$. We prove by induction that $\sum_r \left\| \left( \left( M \mathbf{U}(s) \right)^{\ell k_0} \right)_{p,r} \right\|_{\mathcal{M}} \leq m_{k_0}^\ell$. For $\ell = 1$ this is clear. For the induction consider
    \begin{align*}
    &\left( \left( M \mathbf{U}(s) \right)^{(\ell + 1)k_0} \right)_{p,q} \\[-15pt]
    &\hspace*{15pt}= 
    \begin{bmatrix}
        \sum_i \alpha^{(\ell k_0)}_{p1,i} e^{-s \widetilde{\lambda}_i} & \sum_i \alpha^{(\ell k_0)}_{p2,i} e^{-s \widetilde{\lambda}_i} & \hspace*{-4pt} \cdots \hspace*{-4pt} & \sum_i \alpha^{(\ell k_0)}_{pn,i} e^{-s \widetilde{\lambda}_i}
    \end{bmatrix} 
    \begin{bmatrix}
        \sum_j \alpha^{(k_0)}_{1q,j} e^{-s \widetilde{\widetilde{\lambda}}_j} \\
        \sum_j \alpha^{(k_0)}_{2q,j} e^{-s \widetilde{\widetilde{\lambda}}_j} \\
        \vdots  \\
        \sum_j \alpha^{(k_0)}_{nq,j} e^{-s \widetilde{\widetilde{\lambda}}_j} 
    \end{bmatrix} \\
    &\hspace*{15pt}= \sum_r  \left( \sum_i \alpha^{(\ell k_0)}_{pr,i} e^{-s \widetilde{\lambda}_i} \right) \left( \sum_j \alpha^{(k_0)}_{rq,j} e^{-s \widetilde{\widetilde{\lambda}}_j} \right) = \sum_r  \sum_i \sum_j \alpha^{(\ell k_0)}_{pr,i}  \alpha^{(k_0)}_{rq,j} e^{-s (\widetilde{\lambda}_i + \widetilde{\widetilde{\lambda}}_j)} 
\end{align*}
so that
\begin{align*}
    \left\| \left( \left( M \mathbf{U}(s) \right)^{(\ell + 1)k_0} \right)_{p,q} \right\|_\mathcal{M} &\leq 
    \sum_r  \sum_i \sum_j \left| \alpha^{(\ell k_0)}_{pr,i}  \alpha^{(k_0)}_{rq,j} \right| \\ &= \sum_r  \sum_i \left| \alpha^{(\ell k_0)}_{pr,i} \right| \left\| \left( \left( M \mathbf{U}(s) \right)^{k_0} \right)_{r,q} \right\|_{\mathcal{M}} .
\end{align*}
Hence we find
\begin{align*}
    \sum_q \left\| \left( \left( M \mathbf{U}(s) \right)^{(\ell + 1)k_0} \right)_{p,q} \right\|_\mathcal{M} &\leq 
     \sum_r  \sum_i  \left| \alpha^{(\ell k_0)}_{pr,i} \right|m_{k_0} \leq m_{k_0}^{\ell + 1}.
\end{align*}
The statement then follows again from a Neumann series argument. \qedhere
    \end{enumerate}
\end{proof}

\begin{xmpl}
\begin{enumerate}
    \item 
    As an example for a system where BIBO stability can be shown using the sufficient conditions from Theorem~\ref{thm:sufficientConditions}, take the following impedance passive port-Hamiltonian system from \cite[Ex.~9.2.1]{b_JacobZwart2012} of a vibrating string with a damper affixed at one end (Figure~\ref{fig:bdrDampedString}).
    \begin{figure}[h]
    \begin{tikzpicture}[line cap=rect]
        \fill[pattern=north east lines] (4.76, -0.2) rectangle (5.24, -0.35);
        \draw(5,0.8)--(5,0.4);
        \draw(5, .24)--(5, -0.2);
        \draw[thick](4.76, -0.2)--(5.24, -0.2);
        \draw[thick] plot[smooth] coordinates {
            (0.6,0.8) (1.4, 1.2) (2.6, 0.4) (4.2, 1) (5,0.8) };
        \fill[] (4.83,.4) rectangle (5.17,0.3);
        \draw(4.79, .24)--(5.21, .24);
        \draw(4.79, .24)--(4.79, .48);
        \draw(5.21, .24)--(5.21, .48);
    \end{tikzpicture}
    \caption{Vibrating string damped at the boundary}
    \label{fig:bdrDampedString}
    \end{figure}

    Let the control inputs be given by the force $u_1$ applied at the right-hand side and by the velocity $u_2$ at the left-hand side. As observation outputs we choose the velocity $y_1$ at right-hand side and the force acting on the left-hand side. Then we can write this system as a pHS on the interval $[0,1]$ with    
    \begin{gather*}
        P_1 = \begin{bmatrix}
            0 & 1 \\
            1 & 0
        \end{bmatrix}, \quad P_0 = 0, \quad \mathcal{H} = \begin{bmatrix}
            \rho^{-1} & 0 \\
            0 & T
        \end{bmatrix}, \\
        \widetilde{W}_B = \begin{bmatrix}
            k & 1 & 0 & 0 \\
            0 & 0 & 1 & 0
        \end{bmatrix}, \quad 
        \widetilde{W}_C = \begin{bmatrix}
            1 & 0 & 0 & 0 \\
            0 & 0 & 0 & 1
        \end{bmatrix},
    \end{gather*}
    where $\rho$ and $T$ are the mass density and strain of the string, and $k$ is the damping coefficient of the damper. 

    Calculating the component of the transfer function from $u_1$ to $y_1$, we find it to be $\mathbf{G}_{11}(s) = \frac{1}{ k +  \sqrt{\rho T} \coth(\sqrt{\frac{\rho}{T}} s)}$. As no simple expression for the inverse Laplace transform of this function exists, we are again barred from applying \cite[Thm.~3.4]{a_SchwenningerWierzbaZwart24BIBO} directly and thus indeed have to resort to the sufficient conditions derived here.

    As this pHS satisfies Assumption \ref{ass:pHSDiagAssumptions}, we can diagonalise it and find $M = \left[ \begin{smallmatrix}
        0 & \frac{k - \sqrt{\rho T}}{k + \sqrt{\rho T}} \\
        1 & 0
    \end{smallmatrix} \right]$. Now as $\matrixNorm{M}{\infty}{\infty} = \max(1, \frac{k - \sqrt{\rho T}}{k + \sqrt{\rho T}}) = 1 \nless 1$, we cannot apply Theorem~\ref{thm:sufficientConditions}(1). However, we see that $\sum_{k=0}^\infty \left( \absMatrix{M} \right)^k = \frac{1}{2 \sqrt{\rho T}} \left[ \begin{smallmatrix}
         k + \sqrt{\rho T} & k -  \sqrt{\rho T} \\
         k + \sqrt{\rho T} & k + \sqrt{\rho T}
    \end{smallmatrix} \right]$ converges and thus the system is BIBO stabe by Theorem~\ref{thm:sufficientConditions}(2).

    Using Equation~\eqref{eq:invLaplaceTransTransfFunctionTermByTerm} we can then even determine the inverse Laplace transform of $\mathbf{G}_{11}$ explicitly as
    \begin{equation*}
        \laplaceInvTr{\mathbf{G}_{11}} = \frac{1}{k + \sqrt{\rho T}} \delta(\cdot) - \frac{2 \sqrt{\rho T}}{k^2 - \rho T} \sum_{n=1}^\infty \left( \frac{k - \sqrt{\rho T}}{k + \sqrt{\rho T}} \right)^n \delta\left(\cdot - 2 n \sqrt{\frac{\rho}{T}}\right).
    \end{equation*}

    Note that this example further shows that the norm condition from part (1) of Theorem~\ref{thm:sufficientConditions} is only a sufficient but not a necessary condition.

    \item There are systems for which neither the sufficient condition from part (1) nor the one from part (2) of Theorem~\ref{thm:sufficientConditions} are  satisfied, but where part (3) can be applied. 
    
    Consider \pH{2}{\idMatrix{2}}{0}{2 \idMatrix{2}}{\left[\begin{smallmatrix}
        1 & 0 & \frac{1}{2} & \frac{1}{2} \\
        0 & 1 & \frac{1}{2} & - \frac{1}{2}
    \end{smallmatrix}\right]}{\left[\begin{smallmatrix}
        0 & 0 & 1 & 0 \\
        0 & 0 & 0 & 1
    \end{smallmatrix}\right]}.
    Then we find $\mathbf{G}(s) = \begin{bmatrix}
        \frac{1 - 2 e^\frac{s}{2}}{1 - 2 e^s} & \frac{1}{1 - 2 e^s} \\
        \frac{1}{1 - 2 e^s} & - \frac{1 + 2 e^\frac{s}{2}}{1 - 2 e^s}
    \end{bmatrix}$ for which we cannot easily find the inverse Laplace transform and use it to check for BIBO stability.

    Now employing the decomposition from Proposition \ref{prop:decompositionExistenceDiagonal}, we find $M = \frac{1}{2} \left[ \begin{smallmatrix}
        -1 & -1 \\
        -1 & 1
    \end{smallmatrix} \right]$. As $\matrixNorm{M}{\infty}{\infty} = 1$ we cannot apply part (1) and as $\left( \absMatrix{M} \right)^k = \frac{1}{2} \left[ \begin{smallmatrix}
        1 & 1 \\ 1 & 1
    \end{smallmatrix}\right]$ the sum $\sum_{k=0}^\infty \left( \absMatrix{M} \right)^k$ diverges thus ruling out the use of part (2) of \ref{thm:sufficientConditions}.

    However, we find that $(M \mathbf{U}(s))^2 = \frac{e^s}{2} \left[ \begin{smallmatrix}
        1 & 0 \\
        0 & 1
    \end{smallmatrix} \right]$ and thus 
    \begin{equation*}
        \left\| \laplaceInvTr{\left( M \mathbf{U} \right)^2}(t) \right\|_{\mathcal{M}} = \left\| \begin{bmatrix} \frac{\delta(t)}{2} & 0 \\ 0 &  \frac{\delta(t)}{2} \end{bmatrix} \right\|_{\mathcal{M}} = \begin{bmatrix} \frac{1}{2} & 0 \\ 0 &  \frac{1}{2} \end{bmatrix}
    \end{equation*}
    implying by Theorem~\ref{thm:sufficientConditions}(3) that the hBCS is BIBO stable.
\end{enumerate}
\end{xmpl}

\section{Another approach to BIBO}
\label{sec:approachTwoWellPosedness}
Another approach to BIBO stability of hBCS that we want to present here is based upon the $L^1$-well-posedness of these systems when considered on the state space $L^1$.

\begin{lmm}
\label{lem:L1wellPosedness}
    A hBCS satisfying Assumptions~\ref{ass:systemClassAssumptions} and \ref{ass:pHSDiagAssumptions} is $L^1$-well-posed if considered as a system on the state space $L^1$, that is for some (and hence all) $t>0$ there exists a constant $c > 0$ such that for any classical solution $(u,x,y)$ we have that
    \begin{equation*}
        \| y \|_{\lpSpaceDT{1}{[0,t]}{Y}} + \| x(t) \|_{\lpSpaceDT{1}{[a,b]}{\mathbb{K}}} \leq c \left( \| u \|_{\lpSpaceDT{1}{[0,t]}{U}} + \| x(0) \|_{\lpSpaceDT{1}{[a,b]}{\mathbb{K}}} \right).
    \end{equation*}
\end{lmm}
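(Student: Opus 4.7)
The plan is to proceed via the diagonal form introduced in Section~\ref{subsec:diagonalSystem}. Since $S, S^{-1} \in C^1\left( [a,b], \mathbb{K}^{n\times n} \right)$ are uniformly bounded on the compact interval, the state transformation $\tilde x = S x$ is a bi-Lipschitz isomorphism on $\lpSpaceDT{1}{[a,b]}{\mathbb{K}^n}$ that leaves the input and output signals unchanged. It thus suffices to establish the claimed $L^1$ estimate for the diagonal system \pHSmallBrack{n}{P_1^D}{P_0^D}{\mathcal{H}^D}{\widetilde{W}_B^D}{\widetilde{W}_C^D}, after which the original bound follows with adjusted constants.

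In the diagonal setting and first assuming $P_0^D = 0$, the state splits into $n$ decoupled scalar transport equations $\partial_t \tilde{x}_i = \pm \partial_\xi \bigl( \mathcal{H}^D_{ii} \tilde{x}_i \bigr)$, with the sign determined by which block of $P_1^D$ the index $i$ belongs to. Each such equation can be solved explicitly along characteristics, whose speeds $\left( \mathcal{H}^D_{ii} \right)^{-1}$ are bounded above and bounded away from zero. Tracing characteristics back in time, the value $\tilde{x}_i(\xi, t)$ is determined either by the initial condition (if the characteristic hits the line $t = 0$ inside $(a,b)$) or by the incoming boundary trace (if it hits one of the endpoints at some $s \in [0,t]$). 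A change of variables and Fubini then yield both $L^1$-in-space estimates for $\tilde x_i(\cdot, t)$ and $L^1$-in-time estimates for the outgoing boundary trace, each controlled by the corresponding norms of the initial datum and of the incoming trace, with constants depending only on bounds on $\mathcal H^D$ and on the time horizon.

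Next I would close the loop between incoming and outgoing traces through $\widetilde W_B^D$. By Proposition~\ref{prop:decompositionExistenceDiagonal} (via Proposition~\ref{prop:decompositionExistence}) the block of $\widetilde W_B^D$ acting on the incoming components is invertible, so the incoming traces can be written as a bounded linear combination of the outgoing traces and of the input $u$. Inserting this identity into the previous $L^1$ estimates gives a closed inequality in the trace norms; for $t$ smaller than the minimal characteristic crossing time of $[a,b]$ this self-mapping is a contraction, yielding an $L^1$ bound for both boundary traces and, through the transport representation, for $x(t)$ itself. The output $y$ is likewise controlled, as via $\widetilde W_C^D$ it is a bounded linear combination of these boundary traces. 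Iterating the small-time estimate over $[0,t]$ then produces the bound for arbitrary $t$, at the cost of an exponential constant.

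Finally, to incorporate a nonvanishing $P_0^D$, I would rewrite the diagonal PDE as the $P_0^D = 0$ system perturbed by the bounded multiplication operator $P_0^D \mathcal H^D$ and feed this into a variation-of-constants formula, closing the resulting integral inequality via Gronwall. The main obstacle lies in the characteristic step: one has to carefully split the spatial integral according to whether the characteristic through a given point $(\xi,t)$ originates at $t=0$ or at a lateral boundary, and reassemble the pieces into $L^1$ estimates without spurious prefactors. Once this bookkeeping is in place, the boundary algebra and the Gronwall closure are both routine.
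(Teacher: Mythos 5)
Your argument is correct in outline, but it takes a genuinely different route from the paper: where you reprove everything by hand, the paper's proof is a two-line citation. Concretely, the paper invokes \cite[Thm.~1.5]{a_JacobMorrisZwart15C0Semigroups} to get that $A$ generates a $C_0$-semigroup on $\lpSpaceDT{1}{[a,b]}{\mathbb{K}^n}$ (the generation criterion there is independent of $p$, being exactly the invertibility of the incoming-trace block that you extract from Proposition~\ref{prop:decompositionExistenceDiagonal}), and then \cite[Thm.~7.1]{a_ZwartGorrecMaschkeVillegas10WellPosedness} to upgrade generation to $L^1$-well-posedness. Your characteristics argument is essentially a self-contained reproof of those two results: the diagonalization, the explicit solution along characteristics, and the closure of the trace estimates through the invertible incoming block are precisely the mechanisms underlying the cited theorems. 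What your version buys is transparency and explicit constants; what it costs is length and some bookkeeping you would still need to carry out carefully. Two small points to watch if you write it up in full: (i) for $t$ below the minimal crossing time the outgoing traces depend only on the initial datum, so the ``self-mapping'' has loop gain zero on that window — you do not actually need a contraction argument, just a finite concatenation; (ii) in the final perturbation step you must check that the bounded perturbation $P_0^D\mathcal{H}^D$ preserves not only the semigroup but the admissibility of the (unbounded) observation and the boundedness of the input-output map, i.e.\ full $L^1$-well-posedness rather than just generation — this is standard (Neumann-series on the four-block system operator on small time intervals) but is the one place where ``Gronwall'' alone understates what has to be verified.
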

\begin{proof}
    By Assumption~\ref{ass:systemClassAssumptions}(5) and \cite[Thm.~1.5]{a_JacobMorrisZwart15C0Semigroups} the operator $A$ considered as defined analogously on a domain in $\lpSpaceDT{1}{[a,b]}{\mathbb{K}^n}$ generates a $C_0$-semigroup on $\lpSpaceDT{1}{[a,b]}{\mathbb{K}^n}$ and thus by \cite[Thm.~7.1]{a_ZwartGorrecMaschkeVillegas10WellPosedness} the pHS considered on the state space $\lpSpaceDT{1}{[a,b]}{\mathbb{K}^n}$ is $L^1$-well-posed.
\end{proof}

\begin{rmrk}
    \begin{enumerate}
        \item Note that while \cite[Thm.~1.5]{a_JacobMorrisZwart15C0Semigroups} is formulated only for the case of a spatially constant $P_0$ the proof thereof does not make use of this property and it thus also holds in the more general case considered here.
        \item It is essential here to consider the state space $\lpSpaceDT{1}{[a,b]}{\mathbb{K}^n}$, as on any reflexive Banach space (in particular any $L^p$ with $1 < p < \infty$) $L^1$-well-posedness already implies boundedness of the input operator \cite[Thm.~4.2.7]{b_Staffans2005}.
    \end{enumerate}
\end{rmrk}

\begin{thrm}
    If a hyperbolic boundary control system satisfying Assumptions~\ref{ass:systemClassAssumptions} and \ref{ass:pHSDiagAssumptions} is exponentially stable on the state space $\widetilde{X} = \lpSpaceDT{1}{[a,b]}{\mathbb{K}^n}$ then it is BIBO stable. 
\end{thrm}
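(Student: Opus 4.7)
The plan is to leverage Lemma~\ref{lem:L1wellPosedness} together with the assumed exponential stability on $\widetilde{X}$ to obtain a \emph{global} $L^{1}$-input to $L^{1}$-output bound, and then to conclude BIBO stability via the classical $L^{1}$/$L^{\infty}$-duality of convolution kernels combined with the characterisation in \cite[Thm.~3.4]{a_SchwenningerWierzbaZwart24BIBO}.

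For the first step, consider a classical solution $(u,x,y)$ with $x(0)=0$ and $u \in C^{\infty}_{c}(\posRealNum,U)$. Exponential stability yields constants $M,\omega>0$ such that $\|\semiGroup{t}\|_{\widetilde{X} \to \widetilde{X}} \leq M e^{-\omega t}$. Fix $t_{0}>0$ and apply Lemma~\ref{lem:L1wellPosedness} on each slab $[k t_{0},(k+1)t_{0}]$ with initial datum $x(k t_{0}) \in \widetilde{X}$, obtaining
\[
\| y \|_{\lpSpaceDT{1}{[k t_{0},(k+1)t_{0}]}{Y}} + \| x((k+1)t_{0}) \|_{\widetilde{X}} \leq c\bigl( \| u\|_{\lpSpaceDT{1}{[k t_{0},(k+1)t_{0}]}{U}} + \| x(k t_{0})\|_{\widetilde{X}} \bigr).
\]
Combining this with the semigroup decomposition $x((k+1)t_{0}) = \semiGroup{t_{0}} x(k t_{0}) + \Phi_{t_{0}} u|_{[k t_{0},(k+1)t_{0}]}$, where $\Phi_{t_{0}}$ is the input map from Lemma~\ref{lem:L1wellPosedness}, and choosing $t_{0}$ large enough that $Me^{-\omega t_{0}}$ is strictly less than one, a Datko-type iteration turns the contributions of inputs on earlier slabs into a geometric series and produces a constant $C > 0$ such that
\[
\| y \|_{\lpSpaceDT{1}{\posRealNum}{Y}} \leq C \| u \|_{\lpSpaceDT{1}{\posRealNum}{U}}.
\]

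In the second step, since $x(0)=0$, the output equals the convolution of $u$ with the impulse response $h := \laplaceInvTr{\mathbf{G}}$; the bound above therefore states that convolution with $h$ defines a bounded operator $L^{1}(\posRealNum,U) \to L^{1}(\posRealNum,Y)$. Because $U$ and $Y$ are finite-dimensional, the classical characterisation of such convolution operators (see e.g.\ \cite[Sec.~3.2]{b_GripenbergLondenStaffans1990}, along the lines also exploited in \cite{a_SchwenningerWierzba24_ADualNotionToBIBOStability}) forces $h$ to be a matrix-valued Borel measure on $\posRealNum$ of bounded total variation, so that $\mathbf{G} = \laplaceTr{h}$ with $h \in \mathcal{M}(\posRealNum,\mathbb{K}^{n \times n})$. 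BIBO stability then follows directly from \cite[Thm.~3.4]{a_SchwenningerWierzbaZwart24BIBO}.

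The main obstacle lies in the first step. Exponential stability of $\semiGroupDef$ only controls the homogeneous evolution, whereas the input operator associated with the hBCS is unbounded into $\widetilde{X}$; one has to ensure that the iterated states $x(k t_{0})$ really live in $\widetilde{X}$ (and not merely in $\widetilde{X}_{-1}$) so that the exponential bound on $\mathbb{T}$ can legitimately be combined with Lemma~\ref{lem:L1wellPosedness}. This is precisely the role of $L^{1}$-well-posedness, which supplies the required input-to-state map valued in $\widetilde{X}$ itself, and which makes the splicing-plus-decay argument rigorous.
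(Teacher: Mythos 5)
Your proposal is correct and follows essentially the same route as the paper: combine the $L^{1}$-well-posedness of Lemma~\ref{lem:L1wellPosedness} with exponential stability on $\widetilde{X}$ to obtain a global $L^{1}$-input/$L^{1}$-output bound (the paper delegates this to \cite[Lem.~4.5.1]{b_Staffans2005} and \cite{a_BombieriEngel14WellPosedness}, where you instead sketch the standard slab-iteration argument), and then pass from this ``LILO'' bound to BIBO stability via the measure-of-bounded-total-variation characterisation (the paper cites \cite[Thm.~4.2~\&~Cor.~4.4]{a_SchwenningerWierzba24_ADualNotionToBIBOStability}, which encapsulates exactly the convolution-operator duality you invoke). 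The only cosmetic caveat is that you should not name $h = \laplaceInvTr{\mathbf{G}}$ before having established that this inverse transform exists as a measure; argue instead from the bounded causal time-invariant $L^{1}$-operator directly.
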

\begin{proof}
    By Lemma~\ref{lem:L1wellPosedness} we have that the hBCS is $L^1$-well-posed on $\widetilde{X}$.
    But then by \cite[Lem.~4.5.1]{b_Staffans2005} respectively an argument similar to \cite[Lem.~3.22]{a_BombieriEngel14WellPosedness} in the general case of non-compatible systems (see also \cite[Sec.~2.4.4]{th_Wierzba2025}) we can show that the system is $C^\infty$-LILO stable in the sense of \cite[Def.~4.1]{a_SchwenningerWierzba24_ADualNotionToBIBOStability} and thus by \cite[Thm.~4.2 \& Cor.~4.4]{a_SchwenningerWierzba24_ADualNotionToBIBOStability} also BIBO stable.
\end{proof}

While this theorem provides us with a sufficient condition for BIBO stability it leaves us with the problem of having to determine whether the semigroup generated by the operator $A$ on $L^1$ is exponentially stable. While it is easy to e.g.\ show that the spectrum of the operator $A$ considered on $L^1$ agrees with the one it has on $L^2$, it seems unclear so far which connection (if any) there is between exponential stability on $L^2$ and exponential stability on $L^1$. 

Furthermore, many approaches to checking for exponential stability, such as the Gearhart-Prüss theorem --- which 
forms the basis of the study of exponential stability of the systems in \cite{a_TrostorffWaurick23ExponentialStabilityPHS} --- cannot be applied in the $L^1$ case as they only hold on Hilbert spaces.

Results of a general nature implying exponential stability of a system on $L^1$ seem to exist only in  restricted cases such as e.g.\ for positive semigoups \cite[Thm.~3.5.3]{b_vanNeerven96_AsymptoticBehaviour}.

\section{Conclusion}
\subsection{Some conjectures}
During our studies of BIBO stability for the 1-D hyperbolic boundary control systems underlying this work, a few observations have been made that suggest certain conjectures.

First, we have so far not encountered a system that satisfies the condition \eqref{eq:contractionCondition} as a strict inequality and is not BIBO stable. Considering how \eqref{eq:contractionCondition} relates to the norm of $M$ and the role this matrix plays in the construction of the counterexample in Proposition \ref{prop:existenceNonBIBOpHS} as well as in the arguments in \cite[Thm.~5.6]{a_TrostorffWaurick23ExponentialStabilityPHS}, motivates us to propose the following conjecture on a sufficient condition for BIBO stability.
\begin{cnjctr}
\label{conj:BIBOconjecturePositive}
    If $\widetilde{W}_B R_0^{-1} \Sigma \left( \widetilde{W}_B R_0^{-1} \right)^* > 0$ then the hBCS is BIBO stable.
\end{cnjctr}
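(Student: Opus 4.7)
The hypothesis $\widetilde{W}_B R_0^{-1} \Sigma ( \widetilde{W}_B R_0^{-1} )^* > 0$ is the starting point. By Remark~\ref{rem:assumptionRemarks}(3), Assumption~\ref{ass:systemClassAssumptions}(5) holds and the generated semigroup is contractive, so by Remark~\ref{rem:propertiesDecomposition}(2) the decomposition of Assumption~\ref{ass:decompositionAssumption} is available. Remark~\ref{rem:propertiesDecomposition}(3) then rewrites the strict inequality as $K(\idMatrix{n} - M M^*) K^* > 0$, equivalently $\matrixNorm{M}{2}{2} < 1$. I would attempt the conjecture first under the additional Assumption~\ref{ass:pHSDiagAssumptions} together with Assumption~\ref{ass:vanishingP0D}, where both the transfer function and the auxiliary matrices $\mathbf{U}(s)$ and $\mathbf{Z}(s)$ have transparent exponential forms; the general case should then follow by absorbing a possibly non-zero $P_0^D$ into a perturbation argument and by directly analysing the fundamental solution from Proposition~\ref{prop:transferFunctionExpression} when Assumption~\ref{ass:pHSDiagAssumptions} itself fails.

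In the diagonalised setting of Section~\ref{subsec:caseP0D0}, the strict bound $\matrixNorm{M}{2}{2} < 1$ combined with Lemma~\ref{lem:convergenceNeumannSeries}(3) and Lemma~\ref{lem:transferFunctionUexpr} gives
\begin{equation*}
    \mathbf{G}(s) = \mathbf{Z}(s) \left( \idMatrix{n} - M \mathbf{U}(s) \right)^{-1} K^{-1}
\end{equation*}
for every $s \in \overline{\complNum_0}$. By the explicit expressions \eqref{eq:fundamentalSolution} and \eqref{eq:diagonalVs}, the factor $\mathbf{Z}(s)$ is a finite sum of exponentials in $s$, hence the Laplace transform of a finite combination of Dirac measures in $\mathcal{M}(\posRealNum, \mathbb{K}^{n \times n})$. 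By the characterisation \cite[Thm.~3.4]{a_SchwenningerWierzbaZwart24BIBO}, BIBO stability therefore reduces to proving that $( \idMatrix{n} - M \mathbf{U}(s) )^{-1}$ is itself the Laplace transform of an element of $\mathcal{M}(\posRealNum, \mathbb{K}^{n \times n})$.

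The heart of the proposed argument is a Wiener-type inversion theorem for the matrix Wiener algebra $\mathcal{W}_+^{n \times n}$ consisting of Laplace transforms of measures in $\mathcal{M}(\posRealNum, \mathbb{K}^{n \times n})$. Plainly $\idMatrix{n} - M \mathbf{U}(s) \in \mathcal{W}_+^{n \times n}$, since each entry is a constant minus a single exponential. The hypothesis $\matrixNorm{M}{2}{2} < 1$ together with Lemma~\ref{lem:UsProperties}(4) yields $\matrixNorm{M \mathbf{U}(s)}{2}{2} < 1$ uniformly on $\overline{\complNum_0}$, so $\det ( \idMatrix{n} - M \mathbf{U}(s) )$ is bounded away from $0$ there. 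A matrix Wiener--L\'evy / Bochner--Phillips type theorem should then promote this pointwise invertibility of the symbol to invertibility inside $\mathcal{W}_+^{n \times n}$, yielding the required measure representation of $( \idMatrix{n} - M \mathbf{U} )^{-1}$ and hence, via convolution with $\laplaceInvTr{\mathbf{Z}}$, of $\mathbf{G}$.

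The main obstacle is exactly this last step. The maximal ideal space of the half-plane Wiener algebra $\mathcal{W}_+$ is strictly larger than $\overline{\complNum_0}$, as it carries additional characters from singular parts of measures; hence non-vanishing of the determinant on $\overline{\complNum_0}$ is not a priori enough to guarantee invertibility within the algebra. The cleanest workaround I foresee is to embed $\idMatrix{n} - M \mathbf{U}(s)$ into the strictly smaller Wiener subalgebra of measures supported on the discrete additive sub-semigroup of $\posRealNum$ generated by $\lambda_1, \ldots, \lambda_n$. Via the Bohr-type compactification associated with these frequencies, the maximal ideal space of that subalgebra admits an explicit description (essentially a closed polydisc when $\lambda_1, \ldots, \lambda_n$ are $\mathbb{Q}$-linearly independent), on which $\det ( \idMatrix{n} - M \mathbf{U}(s) )$ extends to a non-vanishing Laurent polynomial; the scalar Wiener--L\'evy theorem on this compact model together with Cramer's rule would then give the inversion. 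A fallback, should the Wiener route fail, is to estimate $\sum_{k \geq 0} \| \laplaceInvTr{ ( M \mathbf{U} )^k } \|_{\mathcal{M}}$ directly by exploiting sign cancellations amongst the exponentially many paths contributing repeated exponents to $( M \mathbf{U}(s) )^k$ --- a delicate combinatorial-spectral analysis that genuinely uses the $\ell^2$-operator bound on $M$ rather than the crude entrywise bound underlying Theorem~\ref{thm:sufficientConditions}(2).
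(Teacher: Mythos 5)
The statement you were asked to prove is posed in the paper as an \emph{open conjecture}: the authors offer only heuristic motivation and a possible route via Conjecture~\ref{conj:MnormHigherOrdBoundedness} and Theorem~\ref{thm:sufficientConditions}(3), so there is no proof in the paper to compare yours against. Judged on its own, your proposal contains one genuinely strong idea, but as written it does not establish the conjecture in the generality stated.

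The strong part is the Wiener-algebra step. Under Assumptions~\ref{ass:pHSDiagAssumptions} and \ref{ass:vanishingP0D} your reduction to showing that $\left( \idMatrix{n} - M \mathbf{U} \right)^{-1}$ is the Laplace transform of an element of $\mathcal{M}(\posRealNum,\mathbb{K}^{n\times n})$ is correct, your identification of the Wiener--Pitt obstruction (non-vanishing of the determinant on $\overline{\complNum_0}$ does not give invertibility in the measure algebra) is a real point, and your proposed workaround does close it: map the free commutative monoid algebra $\ell^1(\mathbb{N}_0^n)$ into $\mathcal{M}(\posRealNum)$ by sending the $i$-th generator to $\delta_{\lambda_i}$ (this needs no $\mathbb{Q}$-linear independence), observe that the maximal ideal space of $\ell^1(\mathbb{N}_0^n)$ is the closed polydisc, that $\matrixNorm{M}{2}{2}<1$ forces $\det\left( \idMatrix{n} - M\,\mathrm{diag}(z_1,\dots,z_n) \right)\neq 0$ there because $\matrixNorm{M\,\mathrm{diag}(z)}{2}{2}\leq\matrixNorm{M}{2}{2}<1$, and conclude invertibility via Gelfand theory and Cramer's rule before pushing forward. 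This is essentially the classical strong-stability argument for difference operators in neutral delay equations; written out carefully it would prove the conjecture for the subclass of Section~\ref{subsec:caseP0D0} and would subsume all three sufficient conditions of Theorem~\ref{thm:sufficientConditions} there. Your ``fallback'' via sign cancellations is, by contrast, not an argument.

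Two genuine gaps remain. First, a step you pass over silently: the hypothesis $\widetilde{W}_B R_0^{-1}\Sigma ( \widetilde{W}_B R_0^{-1} )^* > 0$ gives $\matrixNorm{M}{2}{2}<1$ for the decomposition of $\widetilde{W}_B$ with respect to the $Q_\pm$ of $P_1$, whereas your argument needs the corresponding bound for the matrix $M$ of the \emph{diagonalised} system from Proposition~\ref{prop:decompositionExistenceDiagonal}; since the diagonalising transformation $S$ is in general not unitary, this transfer of the $\ell^2$-bound requires justification. Second, and more seriously, the conjecture is stated for all hBCS satisfying Assumption~\ref{ass:systemClassAssumptions}: your plan to handle $P_0^D\neq 0$ by ``a perturbation argument'' is precisely what the paper's Outlook identifies as unavailable for BIBO stability (bounded perturbations do not preserve it for these hyperbolic systems), and systems failing Assumption~\ref{ass:pHSDiagAssumptions} are not addressed at all. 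So your proposal is a credible proof sketch of a special case of the conjecture, not of the conjecture itself.
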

Note that by Remark \ref{rem:propertiesDecomposition} this condition is --- in the case that Assumption \ref{ass:decompositionAssumption} holds --- equivalent to $\| M \|_{\ell^2 \rightarrow \ell^2} < 1$.

That the condition from this conjecture, could only be a sufficient condition for BIBO stability follows from the following example, which is based upon a class of systems discussed in other contexts in \cite[Sec.~5]{a_Engel13GeneratorProperty} and \cite[Ex.~4.4]{a_TrostorffWaurick23ExponentialStabilityPHS}.
\begin{xmpl}
    Consider the system \pHSmallBrack{2}{\idMatrix{2}}{0}{\idMatrix{2}}{\widetilde{W}_B}{\widetilde{W}_C}  shown to be BIBO stable in the proof of Proposition \ref{prop:BIBOdepHamiltonian}, but for which $
       \widetilde{W}_B R_0^{-1} \Sigma \left( \widetilde{W}_B R_0^{-1} \right)^*  \ngtr 0
    $.
\end{xmpl}
One potential way to approach Conjecture~\ref{conj:BIBOconjecturePositive} --- at least in the case of the diagonalised system and $P_0^D = 0$ --- may be via Theorem~\ref{thm:sufficientConditions}(3). Indeed, we have not yet found an example for a system with $\| M \|_{\ell^2 \rightarrow \ell^2} < 1$ for which no $k_0$ as required in this proposition exists. This motivates us to propose the following conjecture which would imply Conjecture \ref{conj:BIBOconjecturePositive} for this subclass of hBCS.
\begin{cnjctr}
\label{conj:MnormHigherOrdBoundedness}
    Let $\| M \|_{\ell^2 \rightarrow \ell^2} < 1$. Then there exists a $k_0$ such that 
    \begin{equation*}
        \sum_r \left\| \left( \left( M \mathbf{U}(s) \right)^{k_0} \right)_{p,r} \right\|_{\mathcal{M}} < 1, \qquad 1 \leq p \leq n.
    \end{equation*}
\end{cnjctr}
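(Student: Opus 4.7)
My plan is to prove Conjecture~\ref{conj:MnormHigherOrdBoundedness} by a Banach-algebra spectral-radius argument, replacing the somewhat ad-hoc inductions underlying Theorem~\ref{thm:sufficientConditions} with Gelfand theory. The key reformulation is that the desired inequality $\sum_r \| ((M\mathbf{U})^{k_0})_{p,r} \|_\mathcal{M} < 1$ for every $1 \leq p \leq n$ is exactly $\| (M\mathbf{U})^{k_0} \|_* < 1$, where I introduce the row-sum norm $\| A \|_* := \max_p \sum_r \| A_{p,r} \|_\mathcal{M}$. Using the Banach-algebra property $\| \mu \ast \nu \|_\mathcal{M} \leq \| \mu \|_\mathcal{M} \| \nu \|_\mathcal{M}$ of $\mathcal{M}(\posRealNum, \mathbb{K})$, a short computation shows $\| \cdot \|_*$ is itself sub-multiplicative, so it suffices to prove that the spectral radius $\rho_*(M\mathbf{U}) := \lim_k \| (M\mathbf{U})^k \|_*^{1/k}$ is strictly less than $1$.

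Let $\mathcal{B}$ be the closed unital sub-algebra of $\mathcal{M}(\posRealNum, \mathbb{K})$ generated by $\delta_{\lambda_1}, \ldots, \delta_{\lambda_n}$, where $\lambda_1, \ldots, \lambda_n$ are the positive exponents from Lemma~\ref{lem:UsProperties}(1). Every entry of every $(M\mathbf{U})^k$ lies in $\mathcal{B}$, and $\mathcal{B}$ is a commutative unital Banach algebra isomorphic to the semigroup algebra $\ell^1(\Lambda)$, where $\Lambda$ is the sub-semigroup of $(\posRealNum, +)$ generated by $\lambda_1, \ldots, \lambda_n$ (together with $0$). Each character $\chi \in \Phi(\mathcal{B})$ is then a semigroup homomorphism, hence determined by the values $z_i := \chi(\delta_{\lambda_i})$, which must satisfy $|z_i| \leq \| \delta_{\lambda_i} \|_\mathcal{M} = 1$. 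As $\mathcal{B}$ is commutative, $A \in M_n(\mathcal{B})$ is invertible iff $\det A \in \mathcal{B}$ is invertible, and by the Gelfand representation this is in turn equivalent to $\chi(\det A) \neq 0$ for every character $\chi$. One thereby obtains the spectral identity
\[
    \sigma_{M_n(\mathcal{B})}( M\mathbf{U} ) \;=\; \bigcup_{\chi \in \Phi(\mathcal{B})} \sigma_{\mathbb{K}^n}\bigl( M \, \mathrm{diag}(z_1, \ldots, z_n) \bigr).
\]

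Since $\mathrm{diag}(z_1, \ldots, z_n)$ is an $\ell^2$-contraction whenever $|z_i| \leq 1$, the above yields
\[
    \rho_*(M\mathbf{U}) \;\leq\; \sup_{\chi \in \Phi(\mathcal{B})} \matrixNorm{M \, \mathrm{diag}(z)}{2}{2} \;\leq\; \matrixNorm{M}{2}{2} \cdot \sup_\chi \max_i |z_i| \;\leq\; \matrixNorm{M}{2}{2} \;<\; 1,
\]
and the spectral-radius formula produces the desired $k_0$ with $\| (M\mathbf{U})^{k_0} \|_* < 1$.

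The main technical point will be justifying the Gelfand step rigorously, in particular identifying the character space $\Phi(\mathcal{B})$. When $\lambda_1, \ldots, \lambda_n$ are $\mathbb{Q}$-linearly independent, $\Lambda \cong \mathbb{N}_0^n$, so $\mathcal{B} \cong \ell^1(\mathbb{N}_0^n)$ and $\Phi(\mathcal{B})$ is the full closed polydisc $\overline{\mathbb{D}}^n$; this is classical. In the presence of rational dependencies among the $\lambda_i$, $\Phi(\mathcal{B})$ is a proper closed subset of $\overline{\mathbb{D}}^n$ cut out by multiplicative analogues of these relations, which takes some care to write down explicitly but does not affect the bound $|z_i| \leq 1$ that drives the estimate. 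All remaining ingredients (sub-multiplicativity of $\|\cdot\|_*$, invertibility in $M_n(\mathcal{B})$ via determinants, and the spectral-radius formula) are standard commutative Banach-algebra theory.
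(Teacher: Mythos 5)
This statement is left as an open conjecture in the paper --- no proof is given there --- so there is nothing to compare your argument against; it has to be judged on its own. Having checked it, I believe your argument is correct and, modulo writing out a few routine details, actually settles the conjecture. The two pillars are sound. First, $\|A\|_* := \max_p \sum_r \|A_{p,r}\|_{\mathcal{M}}$ is a complete submultiplicative norm on $M_n(\mathcal{B})$ (this follows from $\|\mu \ast \nu\|_{\mathcal{M}} \leq \|\mu\|_{\mathcal{M}}\|\nu\|_{\mathcal{M}}$ exactly as the max-row-sum norm is submultiplicative over $\mathbb{K}$), and since the Laplace transform is an injective algebra homomorphism, $\|(M\mathbf{U})^{k}\|_*$ is precisely the quantity $\max_p\sum_r\|((M\mathbf{U}(s))^{k})_{p,r}\|_{\mathcal{M}}$ appearing in the conjecture; the Beurling--Gelfand formula then reduces the claim to $\rho_{M_n(\mathcal{B})}(M\mathbf{U})<1$. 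Second, the spectral inclusion $\sigma_{M_n(\mathcal{B})}(A) \subseteq \bigcup_{\chi}\sigma(\chi(A))$ is exactly the standard adjugate argument ($A$ invertible in $M_n(\mathcal{B})$ iff $\det A$ invertible in the commutative unital algebra $\mathcal{B}$, iff $\chi(\det A)=\det(\chi(A))\neq 0$ for all characters $\chi$), and the only input you need about $\Phi(\mathcal{B})$ is $|\chi(\delta_{\lambda_i})|\leq \|\delta_{\lambda_i}\|_{\mathcal{M}}=1$, which is automatic since characters of a unital Banach algebra are contractive. So you can, and should, drop the discussion of identifying $\Phi(\mathcal{B})$ with a polydisc or a quotient thereof entirely --- it is a red herring and the rational-dependence case needs no separate treatment. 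The chain $\rho_{M_n(\mathcal{B})}(M\mathbf{U})\leq\sup_{|z_i|\leq 1}\rho(M\operatorname{diag}(z))\leq\matrixNorm{M}{2}{2}<1$ then closes the proof, and as a consistency check it reproduces the paper's boundary counterexample: for $M=\frac12\left[\begin{smallmatrix}-1&-1\\1&1\end{smallmatrix}\right]$ with $\matrixNorm{M}{2}{2}=1$ the characters give eigenvalue $\frac12(w-w^2)$ with supremum exactly $1$ at $w=-1$, matching the paper's computation that all row sums equal $1$.

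Two small points to address in a final write-up. (i) Gelfand theory requires a complex algebra; if $\mathbb{K}=\realNum$ you should complexify $\mathcal{B}$ and $M$, which is harmless since the total variations and $\matrixNorm{M}{2}{2}$ are unchanged and the powers of $M\mathbf{U}$ are the same elements. (ii) State explicitly that you get the quantitative refinement $\lim_k\bigl(\max_p\sum_r\|((M\mathbf{U})^{k})_{p,r}\|_{\mathcal{M}}\bigr)^{1/k}\leq\matrixNorm{M}{2}{2}$, and note that via Theorem~\ref{thm:sufficientConditions}(3) your result establishes Conjecture~\ref{conj:BIBOconjecturePositive} for the subclass of Section~\ref{subsec:caseP0D0}; this is worth flagging because it is the stated motivation for the conjecture. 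Your approach genuinely goes beyond the explicit coefficient inductions used in the proof of Theorem~\ref{thm:sufficientConditions}: those arguments bound each entry by the corresponding entry of $(\absMatrix{M})^k$ and therefore cannot see the cancellations between distinct index paths sharing the same total delay, whereas the Gelfand symbol $M\operatorname{diag}(z)$ captures exactly those cancellations, which is what makes the $\ell^2$ hypothesis usable.
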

Note that the strict inequality is necessary here, as for the case $\matrixNorm{M}{2}{2} = 1$ there are counterexamples.
\begin{xmpl}
    Consider \pHSmallBrack{2}{\idMatrix{2}}{0}{\left[ \begin{smallmatrix}
        2 & 0 \\ 0 & 1
    \end{smallmatrix} \right]}{\widetilde{W}_B}{\widetilde{W}_C} with $
    \widetilde{W}_B = \left[ \begin{smallmatrix}
            1 & 0 & \frac{1}{2} & \frac{1}{2} \\ 0 & 1 & -\frac{1}{2} & -\frac{1}{2}    \end{smallmatrix} \right]$ and $W_C = \left[ \begin{smallmatrix}
            0 & 0 & 1 & 0 \vphantom{\frac{1}{2}}\\
            0 & 0 & 0 & 1 \vphantom{\frac{1}{2}}
    \end{smallmatrix} \right]$.
    Then we find $M = \frac{1}{2} \left[ \begin{smallmatrix}
        -1 & -1 \\ 1 & 1
    \end{smallmatrix} \right]$ and thus $M \mathbf{U}(s) = \frac{1}{2} \left[ \begin{smallmatrix}
        -e^{-2s} & -e^{-s} \\ e^{-2s} & e^{-s}
    \end{smallmatrix} \right].$
    By induction we can then show that
    \begin{equation*}
        \left( M \mathbf{U}(s) \right)^k = \frac{(-1)^k}{2^k} \sum_{i = 0}^{k-1} \binom{k-1}{i} e^{-s(2k - i)} (-1)^i \begin{bmatrix}
            1 & e^s \\
            -1 & - e^s
        \end{bmatrix}.
    \end{equation*}
    Therefore we find
    $\left\| \left( M \mathbf{U}(s) \right)^k \right\|_\mathcal{M} = \frac{1}{2^k} \sum_{i = 0}^{k-1} \binom{k-1}{i}   \begin{bmatrix}
            1 & 1 \\
            1 & 1
        \end{bmatrix} = \frac{1}{2} \begin{bmatrix}
            1 & 1 \\
            1 & 1
        \end{bmatrix},$
    so that $\sum_r \left\| \left( \left( M \mathbf{U}(s) \right)^k \right)_{p,r} \right\|_{\mathcal{M}} = 1$ for all $k$. 
\end{xmpl}
Similar, we have so far not even yet encountered an exponentially stable hBCS that is not also BIBO stable. This may motivate one to suggest the following --- due to \cite[Lem.~9.1.4]{b_JacobZwart2012} in some sense more general --- conjecture.
\begin{cnjctr}
    Any exponentially stable hBCS satisfying Assumption \ref{ass:systemClassAssumptions} is BIBO stable.
\end{cnjctr}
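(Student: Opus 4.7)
The plan is to combine the transfer-function decomposition of Section~\ref{sec:transferFunction} with inversion results in convolution algebras of Callier--Desoer type. As a first reduction, since the state transformation of Subsection~\ref{subsec:diagonalSystem} leaves the transfer function invariant -- and hence preserves both exponential and BIBO stability -- I would without loss of generality assume the system is already in the diagonalised form \pHSmallBrack{n}{P_1^D}{P_0^D}{\mathcal{H}^D}{\widetilde{W}_B^D}{\widetilde{W}_C^D}; by Proposition~\ref{prop:decompositionExistenceDiagonal} this form automatically admits the decomposition required by Assumption~\ref{ass:decompositionAssumption}.

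Next I would treat the sub-case $P_0^D = 0$ (Assumption~\ref{ass:vanishingP0D}), where Lemma~\ref{lem:transferFunctionUexpr} together with Cramer's rule gives
\begin{equation*}
\mathbf{G}(s) = \frac{\mathbf{Z}(s)\,\mathrm{adj}\bigl(\idMatrix{n} - M\mathbf{U}(s)\bigr)\,K^{-1}}{\det\bigl(\idMatrix{n} - M\mathbf{U}(s)\bigr)}.
\end{equation*}
Both $\mathbf{Z}(s)$ and every entry of the adjugate are finite exponential sums $\sum_j c_j e^{-s\mu_j}$ with $\mu_j \geq 0$; in particular their inverse Laplace transforms are finite linear combinations of Dirac masses and hence trivially lie in $\mathcal{M}(\posRealNum, \mathbb{K})$. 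Exponential stability of $\mathbb{T}$, via Proposition~\ref{prop:transferFunctionExpression} and the identification of the zeros of $\det(\widetilde{W}_B[\Psi_b^s;\idMatrix{n}])$ with the poles of $\mathbf{G}$, translates into the retarded exponential polynomial $\det(\idMatrix{n} - M\mathbf{U}(s)) = 1 + \sum_j c_j e^{-s\mu_j}$ (with $\mu_j > 0$) being bounded away from zero on some closed half-plane $\overline{\complNum_{-\epsilon}}$, $\epsilon > 0$. The key step is then to invoke inversion in the Callier--Desoer Banach algebra $\hat{\mathcal{A}}^-(-\epsilon)$: the determinant belongs to this algebra as a finite Dirac-sum, so bounded-away-from-zero implies its reciprocal also lies in $\hat{\mathcal{A}}^-(-\epsilon)$, and therefore is the Laplace transform of a measure in $\mathcal{M}(\posRealNum, \mathbb{K})$. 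Convolving with the BV numerator matrix and applying \cite[Thm.~3.4]{a_SchwenningerWierzbaZwart24BIBO} concludes the sub-case.

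To lift the assumption $P_0^D = 0$, I would expand $\Psi_\zeta^s$ around the unperturbed fundamental solution~\eqref{eq:fundamentalSolution} via a Peano--Baker (Dyson) series. Each iterate contributes a Laplace transform of an $L^1$-kernel supported on a bounded interval, and the aim is to show convergence of the resulting series in the total-variation norm so that the conclusion of the sub-case transports. Equivalently, one hopes to show that the full denominator $\det(\widetilde{W}_B^D[\Psi_b^s;\idMatrix{n}])$ lies in a suitable Callier--Desoer-type algebra, so that the same inversion theorem applies directly.

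The main obstacle is precisely this last step: although each Dyson iterate fits individually into the framework, controlling the aggregate and ensuring that the spectral-separation guaranteed by exponential stability in fact survives at the level of the bounded-variation norm of the reciprocal is delicate. In the absence of a norm constraint on $M$ (exponential stability alone does not furnish $\matrixNorm{M}{2}{2} \leq 1$), the situation is morally that of \emph{neutral}-type delay systems, for which spectral stability is classically known not always to transfer to uniform input-output stability -- this is where a new argument would be needed, or a counterexample found. The alternative path through Section~\ref{sec:approachTwoWellPosedness} runs into the open question explicitly formulated there, namely whether exponential stability on $L^2$ transfers to $L^1$, so that route is for the moment equally blocked.
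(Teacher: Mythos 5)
You should first note that this statement is posed in the paper as an open \emph{conjecture}: the paper offers no proof of it, only the empirical observation that no counterexample has been found. So there is nothing in the paper to compare your argument against, and the relevant question is whether your proposal actually closes the problem. It does not, and you say so yourself in your final paragraph; a proof attempt that ends by conceding that ``a new argument would be needed, or a counterexample found'' cannot be accepted as a proof. Beyond that honest admission, let me flag where the concrete gaps are, since parts of your sketch are genuinely promising.

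Even in the sub-case $P_0^D=0$ your argument is not complete. First, the reduction to diagonal form already uses Assumption~\ref{ass:pHSDiagAssumptions}, which is not part of the hypotheses of the conjecture (Assumption~\ref{ass:systemClassAssumptions} alone does not give you a $C^1$ diagonalising similarity), so at best you are attacking a restricted version. Second, and more importantly, the inversion step is stated too casually: in the full measure algebra $\mathcal{M}(\posRealNum,\mathbb{K})$ it is \emph{false} that a Laplace transform bounded away from zero on a closed right half-plane has a reciprocal that is again the transform of a measure of bounded variation (the Wiener--Pitt phenomenon). You must work in the inverse-closed subalgebra of ``finite Dirac sum plus weighted $L^1$'' (the Callier--Desoer setting you allude to), and there the hypothesis you need is a uniform lower bound $\inf_{\realPart{s}\ge-\epsilon}\left|\det\bigl(\idMatrix{n}-M\mathbf{U}(s)\bigr)\right|>0$. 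Deriving that bound from exponential stability is itself the crux: mere location of the spectrum of $A$ in $\complNum\setminus\overline{\complNum_{-\epsilon}}$ does not suffice, because for exponential sums with incommensurate exponents the real parts of the zero chains can accumulate strictly to the right of every actual zero (the neutral-delay pathology you mention only at the very end). One would have to pass through a quantitative resolvent estimate (Gearhart--Pr\"uss on $L^2$) and then show that $\|(sI-A)^{-1}\|$ controls $\left|\det\bigl(\idMatrix{n}-M\mathbf{U}(s)\bigr)\right|^{-1}$ uniformly on a half-plane; this is plausible but is precisely the step you do not carry out. Finally, the lift to $P_0^D\neq 0$ via a Peano--Baker expansion is only a programme: you give no mechanism for summing the series in total-variation norm, and the paper itself warns (end of Section~\ref{sec:approachTwoWellPosedness} and the Outlook) that bounded perturbations do \emph{not} behave well for BIBO stability, unlike for $L^2$-well-posedness. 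In short: a reasonable plan of attack for a special case, with the genuinely hard steps left open; the conjecture remains unproved.
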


\subsection{Outlook}
While the final result of the approach discussed in Section~\ref{sec:approachOneTransferFunction} only provides conditions for a restricted case $P^D_0 = 0$ and thus in particular for the case of $P_0 = 0$ and a constant Hamiltonian, one could hope that these can be straightforwardly extended to the more general case using simple perturbation arguments as is e.g.\ done in \cite{b_JacobZwart2012} to show $L^2$-well-posedness for pHS or in \cite{a_SchwenningerWierzbaZwart24BIBO} to show BIBO stability of certain parabolic systems. Unfortunately there are convincing reasons that this does not work analogously in this case, such as the observation that the simple shift-semigroup with identity input is not BIBO stable \cite[Ex.~3.1]{a_SchwenningerWierzba24_ADualNotionToBIBOStability}.

Besides the extension of the results to the case $P^D_0 \neq 0$ there are two further evident routes for possible generalisation. The first would be to consider higher order hBCS such as they were for example investigated for asymptotic stability in \cite{a_WaurickZwart23AsymptoticStabilityPHS} employing an approach also based upon an adapted concept of an associated fundamental solution. The second lies in the extension to higher spatial dimensions as was studied e.g.\ in \cite{a_vanDerSchaftMaschke02DistributedPHS, th_Skrepek2021}. Here, however, one has to note that the input and output spaces may be infinite-dimensional --- as with boundary control along a one-dimensional boundary for instance. In this case the characterisation of BIBO stability by the transfer function employed in the first approach is no longer valid. Hence in this case only the second approach using $L^1$-well-posedness may lead to sufficient conditions. 

\section*{Acknowledgements}
The authors would like to thank Prof. M. Waurick (TU Freiberg) for helpful discussions on this subject.

\end{document}